\providecommand{\MR}{\relax\ifhmode\unskip\space\fi MR }
\providecommand{\href}[2]{#2}
\theoremstyle{plain}
\newtheorem{thm}{Theorem}[section]
\newtheorem{lem}[thm]{Lemma}
\newtheorem{cla}[thm]{Claim}
\newtheorem{prop}[thm]{Proposition}
\theoremstyle{definition}
\newtheorem{defn}[thm]{Definition}
\newtheorem{rem}[thm]{Remark}
\newcommand{\PP}{\mathbb P}
\newcommand{\Z}{\mathbb Z}
\newcommand{\C}{\mathbb C}
\newcommand{\ord}{\ensuremath{\operatorname{ord}}}
\newcommand{\adj}{\ensuremath{\operatorname{adj}}}
\newcommand{\Pic}{\mathop{\mathrm{Pic}}\nolimits}
\newcommand{\Ext}{\mathop{\mathrm{Ext}}\nolimits}
\newcommand{\Ker}{\mathop{\mathrm{Ker}}\nolimits}
\newcommand{\Cok}{\mathop{\mathrm{Coker}}\nolimits}
\newcommand{\Sym}{\operatorname{Sym}}
\newcommand{\str}{\mathcal{O}} 
\newcommand{\mc}{\mathcal}
\newcommand{\mb}{\mathbb}
\newcommand{\Supp}{\ensuremath{\operatorname{Supp}}}
\newcommand{\FM}{\ensuremath{\operatorname{FM}}}
\newcommand{\Fr}{\ensuremath{\operatorname{Fr}}}
\newcommand{\Span}[1]{\left<#1\right>}
\title{Elliptic ruled surfaces over arbitrary characteristic fields}
\author{Takato Togashi and Hokuto Uehara}
\date{}
\begin{document}
\maketitle
\begin{abstract}
Atiyah classifies vector bundles on elliptic curves $E$ over  
an algebraically closed field of any characteristic. 
On the other hand, a rank $2$ vector bundle on $E$ defines 
a surface $S$ with a $\PP ^1$-bundle structure on $E$. 
We study when $S$ has an elliptic fibration according 
to the Atiyah's classification, and what kinds of singular fibers appear.
\end{abstract}

\section{Introduction}
Kodaira initiated a study of elliptic surfaces over $\C$ about 60 years ago, since then,  we have a satisfactory theory of this subject. Bombieri and Mumford studied in \cite{MR491720} elliptic surfaces over an algebraically closed field $k$ of arbitrary characteristic $p$, 
and they introduced the notion of wild fibers in the case $p>0$, which turns out a main difficulty to develop an analogous theory of elliptic surfaces in the case $p>0$ to that in the case $p=0$. Ueno and Katsura studied in \cite{MR799664} what extent the theory in the case $p=0$ can be extended or have a nice analogy in the case $p>0$. %Harbourne and Lang classified in \cite{MR936813} multiple fibers of rational elliptic surfaces in arbitrary $p$. 
In this article, we study elliptic fibrations on elliptic ruled surfaces in arbitrary $p$ with the aid of \cite{MR799664} and Atiyah's study of vector bundles on elliptic curves. 

Let $S$ be a smooth projective surface defined over $k$. Suppose that $S$ admits a relatively minimal elliptic fibration $\pi \colon S \rightarrow C$, and 
$$
\pi^* (p_i) = m_i D_i \mbox{ ($m_i$ is the multiplicity, $p_i \in C,  i = 1, 2, \dots, \lambda $)}
$$
are the multiple fibers of $\pi$. 
%In this case, we say that $\pi$ is an elliptic surface of type  ($m_1$, $m_2$, $\ldots$ , $m_\lambda$). 
We know that
 $\mb R^1\pi_*\str _S \simeq \mc L_{\pi} \oplus \mc T_{\pi}$, where $\mc L_{\pi}$ is an invertible sheaf and $\mc T_{\pi}$ a torsion sheaf, which is known to be $0$ in the case $p=0$. 
The multiple fiber $\pi^* (p_i)$ is said to be \emph{wild} for a point $p_i\in \Supp\mc T_{\pi}$,
and \emph{tame} for a point $p_i\not \in \Supp\mc T_{\pi}$. 
The canonical bundles on $S$ and $C$ are related by  \emph{the canonical bundle formula}, which states that there is an isomorphism
$$
\omega_S  \simeq \pi^* (\omega _C \otimes \mc L_{\pi}^{-1}) \otimes \str _S (\sum_{i=1}^\lambda a_i D_i)
$$
for some integer $a_i$ with $0\le a_i \le m_i-1$. If $a_i\ne m_i-1$, then $\pi^*p_i$ is known to be a wild fiber. 

If $\kappa(S)=-\infty$, $S$ is either a rational surface obtained by $9$-points blow up of $\PP^2$ or an \emph{elliptic ruled surface}, that is, a surface with a $\PP^1$-bundle over an elliptic curve.  
Harbourne and Lang classified multiple fibers on rational elliptic surfaces in \cite{MR936813}. 
It turns out that rational elliptic surfaces have reducible fibers, but no wild fibers. 
On the other hand, 
we can readily see that an elliptic fibration on an elliptic ruled surface has no reducible fibers, and moreover all 
multiple fibers are of type ${}_m\mathrm{I}_0$, namely the reductions of all multiple fibers are smooth elliptic 
curves (Lemma \ref{lem:fiber_on_ruled}). 
One of the aim of this article is to determine when a given elliptic ruled surface has an elliptic fibration $\pi$, 
and to determine how many wild and tame fibers appear in $\pi$ (Theorem \ref{thm:main1}).

Let $\mc E$ be a normalized rank $2$ vector bundle on an elliptic curve $E$ and 
$$
f \colon S = \mathbb{P} (\mathcal{E} ) \rightarrow E
$$
 be the $\PP ^1$-bundle on $E$, determined by $\mc{E}$. 
Let us set $e = -\deg \mathcal{E}$. Then 
we see that $e=0$ or $-1$ if $S$ has an elliptic fibration.
If $e=0$ and $\mc E$ is indecomposable, it is easy to see that the isomorphism class of such vector bundle is unique. If $e=0$ and $\mc{E}$ is decomposable,
we see that  $\mc E\cong \mc O_E\oplus \mc L$ for some $\mc L\in \Pic^ 0 E$.
%, and moreover $\PP(\mc O_E\oplus \mc L)$ is not necessarily isomorphic to $\PP(\mc O_E\oplus \mc L')$ for different $\mc{L}, \mc{L}'\in \Pic^0E$ with $\ord \mc{L}=\ord \mc{L}'$.
On the other hand, if $e=-1$,  $\mc{E}$ is always indecomposable.  We also note that there are many isomorphism classes of such vector bundles $\mc{E}$ on $E$, but all vector bundles give the unique isomorphism class of $\PP(\mc{E})$ (see \cite[Theorem V.2.15]{MR0463157}).  

Theorem \ref{thm:main1} below is the first main result in this article. 
In the tables contained therein, the symbol ${}^*$ stands for a wild fiber. 
%For example, $(p-2/p^*)$  a wild fiber with the multiplicity $p$, $a=p-2$. 
Moreover, as mentioned above, if a multiple fiber $mD$ is tame, then $a=m-1$, hence we omit the value of $a$ in the list.  For example, $(2, 0/2^*)$ in the case (ii-3) stands for one tame fiber  
of type ${}_2\mathrm{I}_0$ with $a_1=1$ and one wild fiber of type ${}_2\mathrm{I}_0$ with $a_2=0$. 

%%%
%%%
%%%\label{thm:main1}

\begin{thm}\label{thm:main1}
Let $\mc E$ be a normalized rank $2$ vector bundle on an elliptic curve $E$,
and $S = \mathbb{P} (\mathcal{E} )$ the associated $\PP^1$-bundle 
over $E$ with $\mc E$. 
\begin{enumerate}
\item
For $e = 0$, we have the following:
\begin{center}
\begin{tabular}{|c|c|c|c|}
\hline
       & $\mathcal{E}$ & $(a_i/m_i)$ if $S$ has an elliptic fibration  & $p$ \\ \hline \hline
(i-1)& $\str_E \oplus \str_E$ &no multiple fibers & $p\ge 0$ \\ \hline
(i-2)& $\str_E \oplus \mathcal{L}$,  $\ord \mc L=m>1$ &$(m,m)$  & $p\ge 0$ \\ \hline
(i-3)& $\str_E \oplus \mathcal{L}$,  $\ord \mc L=\infty$ & no elliptic fibrations &$p\ge 0$\\ \hline
(i-4)& indecomposable & no elliptic fibrations & $p=0$ \\ \hline
(i-5)& indecomposable & $(p-2/{p}^*)$ & $p>0$ \\ \hline
\end{tabular} 
\end{center}

Here $\mc L$ is an element of $\Pic ^0E$.
%$\ord \mc L$ is the order as an element of $\Pic^ 0 E$.
%
\item
Suppose that $e=-1$. Then $S$ has an elliptic fibration. The list of singular fibers are the following: 
\begin{center}
\begin{tabular}{|c|c|c|c|}
\hline
       &$(a_i/m_i)$  &E                                   & $p$ \\ \hline \hline
(ii-1)&$(2,2,2)$     &                                      & $p\ne 2$ \\ \hline
(ii-2)&$(1/2^*)$   &  \mbox{supersingular}      & $p= 2$\\ \hline
(ii-3)&$(2, 0/2^*)$ & \mbox{ordinary}         & $p= 2$\\ \hline
\end{tabular} 
\end{center}
\end{enumerate}
\end{thm}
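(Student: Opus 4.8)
The plan is to read off every row of Theorem~\ref{thm:main1} from the intersection theory of the $\PP^1$-bundle $f\colon S\to E$ together with the canonical bundle formula, reducing the characteristic-dependent statements to the structure of the finite order-$m$ subgroup schemes of $E$. Write $\sigma$ for the normalized section ($\sigma^2=-e$) and $F$ for an $f$-fibre, so $F^2=0$, $\sigma\cdot F=1$, and $K_S\equiv-2\sigma-eF$. If $\pi\colon S\to C$ is an elliptic fibration then by Lemma~\ref{lem:fiber_on_ruled} all its fibres are irreducible, so a $\PP^1$-fibre $F$ (which has $p_a=0$) cannot be contained in one; hence $F$ dominates $C$ and $C\cong\PP^1$. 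A fibre class $G\equiv aF+b\sigma$ satisfies $G^2=0$ and, by adjunction with $p_a(G)=1$, also $G\cdot K_S=0$, both of which reduce to $2a=be$. First I would check that an effective irreducible curve with these properties exists only for $e\in\{0,-1\}$: for $e\ge1$ any such class has $b\ge1$ but $a=be/2<be$, contradicting the bound $a\ge be$ for irreducible curves, so no elliptic fibre exists. This gives the dichotomy underlying the two parts of the theorem.

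Next I would set up a single numerical identity governing all the tables. Let $\phi$ be the primitive class with $\phi^2=0$ and $\phi\neq F$ (so $\phi=\sigma$ if $e=0$ and $\phi=2\sigma-F$ if $e=-1$); then $K_S\equiv-2\phi$ for $e=0$ and $K_S\equiv-\phi$ for $e=-1$. The reduced multiple fibres turn out to be the minimal null curves $D_i\equiv\phi$, and since all fibres of $\pi$ are linearly equivalent the common multiplicity equals the coefficient $n$ of the general fibre $G\equiv n\phi$, whence every $m_i=n$. Writing $A=\omega_C\otimes\mc L_\pi^{-1}$ and using $\chi(\str_S)=0$ on $C\cong\PP^1$, the Leray spectral sequence gives $\deg\mc L_\pi=-\ell(\mc T_\pi)$, so $\deg A=-2-\deg\mc L_\pi=-2+\ell(\mc T_\pi)$. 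The canonical bundle formula
\[
\omega_S\simeq\pi^*(\omega_C\otimes\mc L_\pi^{-1})\otimes\str_S\Bigl(\sum_i a_iD_i\Bigr)
\]
then reads numerically $K_S\equiv\bigl(n\deg A+\sum_i a_i\bigr)\phi$, which against $K_S\equiv-2\phi$ or $-\phi$ is the master relation that fixes the $a_i$ once the multiplicity $n$, the number of wild fibres (equal to $\ell(\mc T_\pi)$), and the tame values $a_i=m_i-1$ are known.

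The remaining work is to produce the multiple fibres geometrically and identify their nature, which I would do through the order-$m$ subgroup schemes of $E$: an \'etale subgroup yields a tame ${}_m\mathrm{I}_0$ fibre and an infinitesimal (Frobenius-type) subgroup a wild one, and $\ell(\mc T_\pi)$ equals the number of the latter. For $e=0$ with $\mc E\cong\str_E\oplus\mc L$ the two disjoint sections are the null curves: when $\ord\mc L=m$ the associated cyclic quotient produces the two multiple fibres of (i-2), when $\ord\mc L=\infty$ the system $|\sigma|$ stays finite and no pencil exists (i-3), and $\mc L=\str_E$ is $E\times\PP^1$ (i-1). For $e=0$ indecomposable the unique section makes $|\sigma|$ a single point, so there is no fibration in characteristic $0$ (i-4), while in characteristic $p$ the relative Frobenius supplies the single wild fibre of multiplicity $p$, and the master relation forces $a=p-2$ (i-5). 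For $e=-1$ I would exhibit $|{-2K_S}|$ as a base-point-free pencil (so $n=2$, the general fibre a smooth $4$-section), whose double fibres correspond to the order-two subgroup schemes of $E$: for $p\ne2$ these are the three nonzero points of $E[2]\cong(\Z/2)^2$, all \'etale, giving three tame fibres $(2,2,2)$; for $p=2$ the count drops, separating the ordinary case ($\Z/2$ \'etale plus $\mu_2$ infinitesimal, hence $(2,0/2^*)$) from the supersingular case (a single self-dual infinitesimal subgroup, hence $(1/2^*)$), with the $a_i$ again read off from the master relation.

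The main obstacle is the characteristic-$p$ wild-fibre analysis concentrated in rows (i-5), (ii-2) and (ii-3). Two things must be established by hand: first, the honest cohomology computation---using Atiyah's description of the indecomposable bundles together with the Frobenius twist---that the expected linear system ($|{-2K_S}|$ for $e=-1$, and the Frobenius pencil for $e=0$ indecomposable) is a base-point-free pencil of the stated fibre class rather than merely a single member; and second, that each infinitesimal order-$m$ subgroup really produces a wild ${}_m\mathrm{I}_0$ fibre with $\ell(\mc T_\pi)=1$, for which I would invoke the local structure theory of wild fibres from \cite{MR799664} and cross-check the outcome against the constraint $0\le a_i\le m_i-1$. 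The subtlest point is the characteristic-$2$ dichotomy for $e=-1$: deciding, from the group-scheme structure of $E[2]$, exactly how many double fibres survive and which are wild, since this simultaneously fixes the number of rows, the value $\ell(\mc T_\pi)=1$, and the split $a_i\in\{0,1\}$ between the tame and wild fibres in the ordinary case.
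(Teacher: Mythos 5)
Your numerical skeleton is essentially the paper's own reduction (your ``master relation'' is Lemma~\ref{lem:keyprop} combined with Lemma~\ref{lem:ngeq2}), but it rests on an unjustified step: that all multiple fibres share a common multiplicity $m_i=n$ with reductions $D_i\equiv\phi$. Intersection theory plus the canonical bundle formula alone cannot force this. For instance, with $e=0$ a hypothetical fibration of type $(2,2,3)$ with fibre class $6\phi$, whose reductions are a bisection and two trisections of $f$, is numerically consistent: $K_S\equiv(-2)\cdot 6\phi+(3+3+2\cdot 2)\phi=-2\phi$, and $\tfrac12+\tfrac12+\tfrac23<2$ respects Lemma~\ref{lem:ngeq2}. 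The paper kills such configurations with genuinely arithmetic input --- Bombieri--Mumford's Theorem~\ref{thm:MR491720} and Katsura--Ueno's algebraicity theorem (Theorem~\ref{thm:algebraic}, Remark~\ref{rem:m1m2}) --- which is what cuts the list down to $(m,m)$, $(2,2,2)$, $(p^{\alpha*})$, $(2,2^*)$ in Lemma~\ref{lem:kappa-}; nothing in your toolkit replaces this. Relatedly, your non-existence argument for (i-4), ``the unique section makes $|\sigma|$ a single point,'' only controls the class $\sigma$ itself: you must show every system $|n\sigma+f^*\xi|$ stays at most zero-dimensional, i.e.\ $h^0(\Sym^n\mc{E}_{2,0}\otimes\xi)\le 1$ for all $n$, which is Atiyah's $\Sym^{r}\mc{E}_{2,0}\cong\mc{E}_{r+1,0}$ in characteristic $0$; and it is exactly the failure of this at $r=p$ (the paper's Lemma~\ref{lem:p0}, proved by an explicit transition-matrix and Frobenius computation) that separates row (i-4) from row (i-5).

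The second genuine gap is the wild/tame mechanism. You posit a bijection between multiple fibres and order-$m$ subgroup schemes of $E$, with \'etale subgroups giving tame fibres and infinitesimal ones wild fibres, and you defer its proof to ``local structure theory.'' This correspondence is never established, and the natural construction that would support it --- $\Sym^2(E)\cong\PP(\mc{E}_Q)$ with double fibres over the $2$-torsion --- breaks down precisely where you need it: the paper's final Remark shows that in characteristic $2$ the surface $\Sym^2(E)$ has $e=0$ (it falls into case (i-5) or (i-1)), so it is \emph{not} $\PP(\mc{E}_Q)$, contrary to Atiyah's assertion cited there. The paper's actual route is different: pull $\mc{E}$ back along explicit isogenies (Frobenius, its dual, or a separable degree-$2$ isogeny) so that the bundle splits and the fibration becomes evident, descend via Stein factorization (Lemma~\ref{lem:semi_ample}), and then read off multiplicities, the exponent $\alpha=1$, and wildness from the ramification of $\psi$ (Lemmas~\ref{lem:mult_branch}, \ref{lem:deg2}), the supersingular criterion (Lemma~\ref{lem:multiple_sup}), and the relation $a_i+\nu_i+1=m_i$ of Theorem~\ref{thm:MR491720}. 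Without an argument of this kind, the characteristic-$p$ rows (i-5), (ii-2), (ii-3) --- which carry the real content of the theorem --- remain unproved in your plan.
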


Maruyama also considered a condition when elliptic ruled surfaces have an elliptic fibration 
\cite[Theorem 4]{MR280493}, by terms of elementary transformations of ruled surfaces (see Remark \ref{rem:maruyama}). Suwa also considered a similar condition in \cite[Theorem 5]{MR242198} in the case $p=0$.
In the case of $p\ne 2$, the result in Theorem \ref{thm:main1} was obtained in the first author's master thesis \cite{To11}.

%Although rational elliptic surfaces are known to have reducible fibers and no wild fibers, %\cite{Lang79}, 
%we note that an elliptic ruled surface can have wild fibers and no reducible fibers.\footnote{If a ratinal surface sdmits a relatively minimal  elliptic fibration, it is called a \emph{rarional elliptic surface}. On the other hand, an \emph{elliptic ruled surface} does not neccessarily have an elliptic fibration. But if the context allows, ``an elliptic ruled surface'' sometimes means ``an elliptic ruled surface admitting an elliptic fibration''.} 
We also notice that  the elliptic fibration in the case (ii-2) have a \emph{wild fiber of strange type} (see Remark \ref{rem:strange}).

In the proof of Theorem \ref{thm:main1}, we can construct a \emph{resolution of singular fibers} on elliptic 
ruled surfaces. More precisely we have following:
%%%
\begin{thm}\label{thm:reduction}
Let $f\colon S\to E$ be a $\PP^1$-bundle over an elliptic curve $E$ such that $S$ also has an elliptic fibration $\pi\colon S\to \PP^1$.
Then there is a finite surjective morphism $\varphi\colon F\to E$ from an elliptic curve $F$, fitting into the following diagram:
\begin{equation*}\label{eqn:2,0,p}
\xymatrix{ 
F \ar[d]_\varphi\ar@{}[dr]|{\square}   &  F\times \PP^1 \ar[l]_{f'} \ar[r]^{\pi'} \ar[d]^q  & \PP ^1   \ar[d]^\psi  \\
E      &  S      \ar[l]_{f}           \ar[r]^{\pi} &  \PP ^1. 
}
\end{equation*}
Here, $f'$ and $\pi'$ are natural projections, the left square is the fiber product diagram and the right square is obtained by the  Stein factorization of $q\circ \pi$.   
\end{thm}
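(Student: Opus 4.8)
The plan is to use a general fiber of $\pi$ as the covering curve. Let $F$ be a general (hence smooth) fiber of $\pi$, which is an elliptic curve, and set $\varphi=f|_F\colon F\to E$. Since $F$ has genus $1$ it cannot be contained in a fiber of $f$ (those fibers are copies of $\PP^1$), so $\varphi$ is nonconstant, hence finite and surjective; write $n=\deg\varphi$, which equals the intersection number of a ruling $f^{-1}(\mathrm{pt})$ with $F$. I would then form $P:=F\times_E S$, so that the left square is the fiber product by construction: the projection $q\colon P\to S$ is the base change of $\varphi$ (finite of degree $n$), and the projection $f'\colon P\to F$ is the pulled-back $\PP^1$-bundle $\PP(\varphi^*\mc E)$. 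Note the fiber $F$ itself determines a diagonal section $\delta\colon F\to P$, $x\mapsto(x,\iota(x))$, which lands inside a single fiber of $\pi\circ q$ since $\pi\circ\iota$ is constant.

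The key step is to show $P\cong F\times\PP^1$, equivalently that $\varphi^*\mc E$ is projectively trivial. Here I would invoke Atiyah's classification as organized in Theorem \ref{thm:main1}. In the decomposable case $\mc E\cong\str_E\oplus\mc L$ with $\ord\mc L=m$, the general fiber $F$ is exactly the connected étale cyclic cover trivializing $\mc L$, so $\varphi^*\mc E\cong\str_F\oplus\str_F$. In the indecomposable cases (either $e=0$ with $p>0$, or $e=-1$), $\mc E$ is a nonsplit self-extension of a line bundle, classified by a nonzero class in $H^1(E,\str_E)$; the point is that $\varphi^*$ annihilates this class, because the relevant $\varphi$ is inseparable and the pullback vanishes on $H^1(\str)$ (concretely, $[p]^*=0$ on $H^1(E,\str_E)$ since this space is the Lie algebra of $\Pic^0E$ on which $[p]$ acts by $p=0$). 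Hence $\varphi^*\mc E$ splits and is projectively trivial, giving $P\cong F\times\PP^1$ with $f'$ the first projection.

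For the right square I would set $\tilde\pi:=\pi\circ q\colon P=F\times\PP^1\to\PP^1$ and take its Stein factorization $P\xrightarrow{\pi'}B\xrightarrow{\psi}\PP^1$. First, $B\cong\PP^1$: a ruling $f'^{-1}(x)\cong\PP^1$ maps to $\PP^1$ under $\tilde\pi$ with degree $n>0$ (it meets each $\pi$-fiber in $n$ points, by Lemma \ref{lem:fiber_on_ruled} and the computation of $n$ above), hence nonconstantly, so it dominates $B$ and forces $g(B)=0$. Next I would identify $\pi'$ with the second projection: the fibers $F\times\{t\}$ of the projection are connected, and $\tilde\pi$ is constant on each of them because $q(F\times\{t\})$ is a degree-$n$ cover of $E$ that must coincide with a single fiber of $\pi$ (these also have degree $n=\deg\varphi$ over $E$). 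Thus $\tilde\pi$ factors as $\psi$ composed with the projection, and by the uniqueness of the Stein factorization $\pi'$ is the natural projection while $\psi$ is the induced finite map; in the wild cases $\psi$ is allowed to be inseparable. This yields the asserted diagram.

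The main obstacle is the trivialization step in the wild characteristic-$p$ cases (i-5) and (ii-2). There $\varphi=f|_F$ must be purely inseparable (Frobenius-type) so that $\varphi^*$ kills the extension class; equivalently one must realize $S$ as a quotient $(F\times\PP^1)/G$ by an \emph{infinitesimal} group scheme $G$ with $E=F/G$, with $G$ acting on $\PP^1$ so that $\psi\colon\PP^1\to\PP^1/G\cong\PP^1$ is the resulting inseparable cover. Checking that this quotient presentation holds — and that the na\"{\i}ve ``number of components'' count is replaced by the correct inseparable degree of $\psi$, so that the map $(f',\pi')\colon P\to F\times\PP^1$ is genuinely an isomorphism — is the delicate part, and is precisely where the strange-type wildness enters. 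The tame cases (including (ii-1) and (ii-3)) instead follow the classical isotrivial picture with $G$ étale, where $F\times_E F_t$ splits into $n$ disjoint sections and $\psi$ is separable.
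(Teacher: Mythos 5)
Your overall skeleton (take $F$ to be a general fiber of $\pi$, set $\varphi=f|_F$, form the fiber product, and Stein-factorize $\pi\circ q$) matches the paper's construction, and the right-hand square is unproblematic once $F\times_E S\cong F\times\PP^1$ is known. The genuine gap is in that key trivialization step, where your mechanism is wrong in exactly the hard cases. For $\mc{E}=\mc{E}_{2,0}$ with $E$ \emph{ordinary} (case (i-5), which is wild), the isogeny that splits the pullback of $\mc{E}_{2,0}$ is \emph{separable}: it is the dual of Frobenius, which is \'etale for ordinary $E$, and this is what $f|_F$ actually is. Your claimed mechanism --- ``the relevant $\varphi$ is inseparable, and inseparable pullback kills $H^1(E,\str_E)$'' --- fails twice here: $f|_F$ is not inseparable, and for ordinary $E$ the Frobenius pullback is \emph{bijective} on $H^1(E,\str_E)$, not zero (it is zero only for supersingular $E$; see the proof of Lemma \ref{lem:p0}). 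The correct criterion, isolated in Lemma \ref{lem:pull_back} (i) via Lemma \ref{lem:push_forward}, is that the \emph{dual} isogeny $\widehat{\varphi}$ be purely inseparable. Your appeal to $[p]^*=0$ on $H^1(E,\str_E)$ is a true fact, but it concerns $[p]$, of degree $p^2$, and says nothing about a degree-$p$ isogeny such as $f|_F$. Relatedly, your closing tame/wild $=$ \'etale/infinitesimal dichotomy is false: in case (i-5) with $E$ ordinary the fiber is wild, yet $\varphi$, $q$, $\psi$ are all separable and the wildness is carried by the wild ramification of $\psi$ (\S\ref{E20p>0}).

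The $e=-1$ cases (ii-1)--(ii-3) contain a further structural error: $\mc{E}_Q$ is \emph{not} a nonsplit self-extension of a line bundle (its determinant $\str_E(Q)$ has odd degree, while a self-extension of $\mc{L}$ has determinant $\mc{L}^{\otimes 2}$); its extension class lives in $\Ext^1_E(\str_E(Q),\str_E)\cong H^1(E,\str_E(-Q))$, and the paper proves in Lemma \ref{lem:pull_back} (ii),(iii) that this class survives pullback: the sequence \eqref{eqn:pullback_Q} remains non-split for \emph{every} prime-degree isogeny. Hence no one-step ``kill the class in $H^1(\str)$'' argument can work; $\varphi^*\mc{E}_Q$ becomes projectively trivial only after composing two isogenies, passing through the intermediate surface $\PP(\str_F\oplus\mc{L})$ or $\PP(\mc{E}_{2,0})$ as in \S\ref{subsec:pne2}--\S\ref{subsec:p2} (consistently, your $\varphi=f|_F$ has degree $4$ here, not prime degree). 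A smaller but real error of the same kind occurs in case (i-2): when $p\mid m$ the cover trivializing $\mc{L}$ is partly purely inseparable (\S\ref{subsec:O+L}), so the general fiber is not ``the connected \'etale cyclic cover'' you describe. In short, your choice of $F$ and the reduction to projective triviality of $\varphi^*\mc{E}$ are sound, but the proofs of that triviality must be replaced by the duality-sensitive case analysis of Lemmas \ref{lem:pull_back} and \ref{lem:pull_back2}.
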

We observe in Theorem \ref{thm:reduction} that by taking a suitable finite cover of $S$, we obtain an elliptic fibration $\pi'$ having milder singular fibers (in Theorem \ref{thm:reduction}, $\pi'$ actually has no singular fibers).
The existence of such resolutions was already observed under some suitable situations.
See, for example, \cite[\S 6, \S 7]{MR799664} and \cite[Theorem A]{MR1753506}. 
It is notable that the notion of a \emph{wild fiber of strange type} was introduced in \cite{MR799664} as an obstruction of the existence of such resolutions in their constructions. However, we can actually find a resolution even in the case (ii-2), $\pi$ has a wild fiber of strange type. 

The construction of this article is as follows.
In \S \ref{sec:pre} we refer and prove several results on elliptic surfaces and vector bundles on elliptic curves. 
In \S \ref{sec:singular_fiber}, we narrow down the candidates of singular fibers of elliptic fibrations 
on elliptic ruled surfaces.
 We prove Theorem \ref{thm:main1} in \S \ref{sec:main1} and Theorem \ref{thm:reduction} in \S \ref{sec:reduction}.

In \cite{MR3652778}, we apply the result in \cite{To11}, which is the same as Theorem \ref{thm:main1} in the case $p\ne 2$, to study the set $\FM(S)$ of Fourier--Mukai partners of elliptic ruled surfaces $S$ in the case $p=0$. 
In the forthcoming paper \cite{UW}, we apply Theorem \ref{thm:main1} to study $\FM (S)$ for elliptic ruled surfaces $S$ for arbitrary $p$. 

%%%%%%%%%%%%%%%%%%%%%%%%%%%%%%%%

\paragraph{Notation and conventions}\label{subsec:notation_convention}
All varieties $X$ are defined over an algebraically closed field $k$
of characteristic $p\ge 0$. A point $Q\in X$ always means a closed point. 
If we denote 
$$D_1\sim D_2\quad (\mbox{resp. }D_1 \equiv D_2)$$ 
for divisors $D_1$ and $D_2$ on a normal variety $X$,
we mean that $D_1$ and $D_2$ are linearly equivalent (resp. numerically equivalent).
We denote the dimension of the cohomology $H^i(X,\mc{F})$ of a sheaf $\mc{F}$ on $X$ by 
$h^i(X,\mc{F})$.
 
In the case $p>0$,
we denote  a relative Frobenius morphism by 
$$
\Fr\colon X_p\to X.
$$
 
By an \emph{elliptic surface}, we will
always mean a smooth projective surface $S$ together with a smooth projective curve $C$ and a relatively minimal projective morphism $\pi\colon S\to C$ whose general fiber is an elliptic curve.

Take a point $Q$ on an elliptic curve $E$. Since we have 
$$\Ext^1_E(\mc{O}_E,\mc{O}_E)\cong \Ext^1_E(\mc{O}_E(Q),\mc{O}_E)\cong k,$$ 
there is the unique isomorphism class of rank $2$ vector bundle
 $\mc{E}_{2,0}$ (resp. $\mc{E}_Q$) fitting into the non-split exact sequence:
$$
0\to \mc{O}_E\to \mc{E}_{2,0}\to \mc{O}_E\to 0 \quad(\mbox{resp. } 0\to \mc{O}_E\to \mc{E}_Q\to \mc{O}_E(Q)\to 0). 
$$

\paragraph{Acknowledgments.}
We would like to thank Hiroyuki Ito, Kentaro Mitsui, Shigeru Mukai,
Toshiyuki Katsura, Noboru Nakayama for invaluable suggestions. 
H.U. is supported by the Grants-in-Aid for Scientific Research (No. 18K03249). 

%%%%%%%%%%%%%%%%%%%%%%%%%%%%%%%%%%%%%%%%%%%%%%%%%%%%%%%%%%%%%%%%%%%%%%%%%%%%%%%%%%%%%%%%%%%%%%
%%%%%%%%%%%%%%%%%%%%%%%%%%%%%%%%%%%%%%%%%%%%%%%%%%%%%%%%%%%%%%%%%%%%%%%%%%%%%%%%%%%%%%%%%%%%%%
%%%%%%%%%%%%%%%%%%%%%%%%%%%%%%%%%%%%%%%%%%%%%%%%%%%%%%%%%%%%%%%%%%%%%%%%%%%%%%%%%%%%%%%%%%%%%%

\section{Preliminaries}\label{sec:pre}

\subsection{Elliptic surfaces}\label{subsec:elliptic}
In this subsection, we refer several useful results on elliptic surfaces in \cite{MR491720} and \cite{MR799664}.
Let $\pi : S \rightarrow C$ be an elliptic surface. 
Suppose that 
$$\bigl \{ \pi^* (Q_i) = m_i D_i \bigm |  i = 1, 2, \dots, \lambda \bigr\}$$
is the set of multiple fibers for $Q_i\in C$. Then, we have a decomposition 
$\mb R^1\pi_*\str _S \simeq \mc L_{\pi} \oplus\mc T_{\pi}$ where $\mc L_{\pi}$ is an invertible 
sheaf and $\mc T_{\pi}$ is a 
torsion sheaf. It is known that $\mc T_{\pi}=0$ when $p=0$.
If $Q_i\notin \Supp\mc T_{\pi}$, $\pi^* (Q_i)$ is said to be a \emph{tame fiber},
 and a \emph{wild fiber} otherwise. Note that for a point $Q\in \Supp\mc T_{\pi}$, $\pi^*(Q)$ is a 
multiple fiber, since $h^0(\pi^*(Q),\mc O_{\pi^*(Q)})\ge 2$.
 
Let us define $d := \deg \mc L_{\pi}$ and $g:=g(C)$. 

%%%
%%%
%%% [Canonical bundle formula]

\begin{prop}[Theorem 2 in \cite{MR491720}]\label{prop:canonical}
Let $\pi \colon S \longrightarrow C$ be a smooth projective elliptic surface. For the decomposition
$\mb R^1\pi_*\str _S \simeq \mc L_{\pi}\oplus \mc T_{\pi},$
we have
$$
\omega_S  \simeq \pi^* (\omega _C \otimes \mc L_{\pi}^{-1}) \otimes \str _S (\sum_{i=1}^\lambda a_i D_i).
$$
Moreover we have the following:
\begin{enumerate}
\item 
$0\leq a_i\leq m_i-1$ for  all $i = 1, 2, \dots, \lambda$.
\item 
If $m_i D_i$ is a tame fiber, then $a_i=m_i-1$.
\item 
$d = -\chi (\str _S) - h^0 (C,\mc T_{\pi})$ and $d\le 0$.  
\end{enumerate} 
\end{prop}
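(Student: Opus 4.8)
The plan is to first establish the displayed canonical bundle formula and then read off (i)--(iii) from it, the formula itself being obtained by determining the relative dualizing sheaf $\omega_{S/C} := \omega_S\otimes\pi^*\omega_C^{-1}$. Since $\pi$ is relatively minimal with general fiber a smooth genus-one curve $F$, adjunction gives $\omega_{S/C}|_F\cong\omega_F\cong\str_F$, so $\omega_{S/C}$ is trivial on a general fiber and hence of the shape $\omega_{S/C}\cong\pi^*\mc M\otimes\str_S(V)$ for some $\mc M\in\Pic C$ and some vertical divisor $V$. To see that $V$ may be taken supported on the multiple fibers alone, I would intersect $\omega_{S/C}$ with the components $C_j$ of any reduced (possibly reducible) fiber $F_0$: adjunction gives $\omega_{S/C}|_{F_0}\cong\omega_{F_0}$, which has degree $0$ on each $C_j$, so the part of $V$ lying over $F_0$ is numerically trivial on $F_0$; connectedness of fibers then makes it proportional to the whole fiber, and that multiple of $F_0\sim\pi^*(\mathrm{pt})$ can be absorbed into $\pi^*\mc M$. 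This leaves $V=\sum_{i=1}^\lambda a_iD_i$ supported on the multiple fibers.

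To identify $\mc M$, I would apply Grothendieck--Serre relative duality to the flat proper Cohen--Macaulay morphism $\pi$ of relative dimension one: $R\pi_*\omega_{S/C}[1]\cong \RHom(R\pi_*\str_S,\str_C)$. Feeding in $\pi_*\str_S=\str_C$ and $R^1\pi_*\str_S=\mc L_\pi\oplus\mc T_\pi$ and comparing cohomology sheaves yields $\pi_*\omega_{S/C}\cong(R^1\pi_*\str_S)^\vee=\mc L_\pi^{-1}$, the torsion sheaf $\mc T_\pi$ contributing only to $R^1\pi_*\omega_{S/C}$ through an $\mathcal{E}xt^1$ term. On the other hand, once (i) is known (equivalently, once $V$ contains no entire fiber), one has $\pi_*\str_S(V)=\str_C$, whence $\pi_*\omega_{S/C}=\mc M$. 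Comparing gives $\mc M\cong\mc L_\pi^{-1}$, which is exactly the asserted formula $\omega_S\cong\pi^*(\omega_C\otimes\mc L_\pi^{-1})\otimes\str_S(\sum_i a_iD_i)$.

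Items (i) and (ii) are local over $C$ near each $Q_i$, and this is where I expect the real work to be. Writing $m=m_i$, $D=D_i$, adjunction gives $\omega_{mD}\cong\omega_{S/C}|_{mD}$ since $\pi^*\str_C(Q_i)|_{mD}$ is trivial; as $mD$ has arithmetic genus one, $\chi(\str_{mD})=0$ and $\deg_D(\omega_{S/C}|_{mD})=0$. Running through the filtration $\str_{jD}$ for $0\le j\le m$ and analyzing the torsion element $\mc N:=\str_D(D)\in\Pic^0 D$ pins down $a_i$ in terms of the order of $\mc N$, giving the bound $0\le a_i\le m-1$ of (i); in the tame case, $Q_i\notin\Supp\mc T_\pi$ forces $\mc N$ to have exact order $m$ and hence $a_i=m-1$, which is (ii). The genuinely characteristic-$p$ phenomenon is that $\mc N$ can have order strictly dividing $m$ when $p\mid m$, producing wild fibers with $a_i<m-1$, and carrying out this local computation carefully in characteristic $p$ is the main obstacle.

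Finally, for (iii) I would compute $\chi(\str_S)$ by Leray: $\chi(\str_S)=\chi(\str_C)-\chi(R^1\pi_*\str_S)$. Using $\chi(\str_C)=1-g$, Riemann--Roch $\chi(\mc L_\pi)=d+1-g$ on $C$, and $\chi(\mc T_\pi)=h^0(C,\mc T_\pi)$ for a torsion sheaf, this collapses to $\chi(\str_S)=-d-h^0(C,\mc T_\pi)$, i.e. $d=-\chi(\str_S)-h^0(C,\mc T_\pi)$. The inequality $d\le 0$ then follows from the nonnegativity of $\deg\mc L_\pi^{-1}=\deg\pi_*\omega_{S/C}$ (semipositivity of the pushforward of the relative dualizing sheaf), equivalently from $\chi(\str_S)\ge 0$ for a relatively minimal elliptic surface, since then $d=-\chi(\str_S)-h^0(C,\mc T_\pi)\le 0$. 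Establishing this positivity in arbitrary characteristic is the second delicate input and is ultimately the substance of Bombieri--Mumford's argument.
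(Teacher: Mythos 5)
First, a remark on the benchmark: the paper never proves Proposition \ref{prop:canonical} --- it is quoted verbatim in the preliminaries as Theorem 2 of Bombieri--Mumford \cite{MR491720} --- so the only meaningful comparison is with the argument in that reference, whose outline your sketch follows. Your soft steps are essentially correct: $\omega_{S/C}$ is trivial on the generic fiber, hence of the form $\pi^*\mc{M}\otimes\str_S(V)$ with $V$ vertical; Zariski's lemma reduces the part of $V$ over each point to a multiple of the fiber, and after absorbing whole fibers into $\mc{M}$ one gets $V=\sum a_iD_i$ with $0\le a_i\le m_i-1$, so (i) holds by normalization (note it is a choice, not a consequence, so your worry about circularity with $\pi_*\str_S(V)=\str_C$ evaporates); relative duality gives $\pi_*\omega_{S/C}\cong\mc{L}_\pi^{-1}$ and hence $\mc{M}\cong\mc{L}_\pi^{-1}$; and Leray plus Riemann--Roch gives the equality in (iii). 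One smaller unproved assertion: that $\omega_{F_0}$ has degree $0$ on every component $C_j$ of a reducible fiber is not ``adjunction'' alone --- it is equivalent to $K_S\cdot C_j=0$, which needs the standard relative-minimality argument excluding $(-1)$-curves in fibers.

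The genuine gaps are the two you flag yourself, and they are precisely the content of the theorem. For (ii), your reduction is fine as far as it goes: adjunction gives $\str_{D_i}\cong\omega_{D_i}\cong\bigl(\str_S(D_i)|_{D_i}\bigr)^{\otimes(a_i+1)}$, so $\nu_i\mid a_i+1\le m_i$, and if $\nu_i=m_i$ then $a_i=m_i-1$. But the implication ``$Q_i\notin\Supp\mc{T}_\pi\Rightarrow\nu_i=m_i$'' is exactly the hard local analysis of $R^1\pi_*\str_S$ at $Q_i$, via the filtration $0\to\str_{D_i}(-jD_i)\to\str_{(j+1)D_i}\to\str_{jD_i}\to0$ and the jumping of $h^0(\str_{jD_i})$ at multiples of $\nu_i$; you name the objects but do not carry out the computation, and nothing in your sketch rules out a tame fiber with $\nu_i<m_i$. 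Likewise, for $d\le 0$ you appeal to semipositivity of $\pi_*\omega_{S/C}$, equivalently $\chi(\str_S)\ge 0$, ``in arbitrary characteristic'' without proof; the standard route is Noether's formula $12\chi(\str_S)=K_S^2+c_2(S)$, with $K_S^2=0$ coming from the formula just proved and $c_2(S)\ge 0$ from the fiberwise Euler-number count (including the nonnegative wild-ramification terms in characteristic $p$), none of which appears in your write-up. So what you have is a correct skeleton of the Bombieri--Mumford proof with the two load-bearing steps left as acknowledged gaps; as a self-contained proof of the Proposition it is incomplete, though as a reduction of the Proposition to those two cited facts it is sound.
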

\noindent 
For each $i$, $\str _S (D_i)|_{D_i}$ is known to be a torsion element in $\Pic^0 D_i$.
Let us define $\nu _i:= \ord \str _S (D_i)|_{D_i}$. Then it is known that 
\begin{equation}\label{eqn:mpnu}
m_i=p^\alpha \nu_i
\end{equation}
 for some $\alpha\ge 0$ and that $\nu_i=m_i$ if and only if 
$m_iD_i$ is a tame fiber. 
If $D_i$ is a supersingular elliptic curve, $\Pic^0 D_i$ has no torsion elements whose order 
is divisible by $p$. Thus we have the following.

\begin{lem}\label{lem:multiple_sup}
Suppose that $mD$ is a multiple fiber and $D$ is a supersingular elliptic curve. Then
$mD$ is tame if and only if $m$ is not divisible by $p$. 
\end{lem}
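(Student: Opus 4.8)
The plan is to combine the structural facts recorded just before the statement. We are given a multiple fiber $mD$ with $D$ a supersingular elliptic curve, and we must show $mD$ is tame if and only if $p\nmid m$.

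First I would invoke the relation \eqref{eqn:mpnu}, namely $m=p^\alpha\nu$ where $\nu:=\ord\,\str_S(D)|_D$ and $\alpha\ge 0$. The key input about supersingularity is the sentence immediately preceding the lemma: if $D$ is a supersingular elliptic curve, then $\Pic^0 D$ has no torsion element whose order is divisible by $p$. Since $\str_S(D)|_D$ is a torsion element of $\Pic^0 D$ by the remark after Proposition \ref{prop:canonical}, its order $\nu$ cannot be divisible by $p$; that is, $p\nmid \nu$.

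Next I would prove the two implications. For the forward direction, suppose $mD$ is tame. By the characterization quoted after \eqref{eqn:mpnu}, a multiple fiber is tame if and only if $\nu=m$, so $m=\nu$, and since $p\nmid\nu$ we conclude $p\nmid m$. For the converse, suppose $p\nmid m$. Writing $m=p^\alpha\nu$ with $p\nmid\nu$, the hypothesis $p\nmid m$ forces $\alpha=0$, hence $m=\nu$; again by the tameness criterion $\nu=m$ this means exactly that $mD$ is tame.

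The argument is essentially a direct unpacking of the previously stated facts, so there is no serious obstacle; the only point requiring care is to apply the supersingularity hypothesis correctly, ensuring that $p\nmid\nu$ always holds in this setting (so that the power $p^\alpha$ in $m=p^\alpha\nu$ is the \emph{entire} $p$-part of $m$). Once that is secured, the biconditional follows immediately from the equivalence ``$mD$ tame $\iff \nu=m$'' together with the elementary fact that, given $m=p^\alpha\nu$ with $p\nmid\nu$, one has $\nu=m$ precisely when $p\nmid m$.
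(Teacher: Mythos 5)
Your proof is correct and takes essentially the same approach as the paper: the paper deduces the lemma directly from the relation $m=p^\alpha\nu$ in \eqref{eqn:mpnu}, the criterion that $mD$ is tame if and only if $\nu=m$, and the observation that $\Pic^0 D$ of a supersingular elliptic curve has no torsion of order divisible by $p$. Your two implications are exactly the unpacking that the paper leaves implicit in its ``Thus we have the following.''
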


The following is useful.

\begin{thm}[Corollary to Proposition 4 in \cite{MR491720}]\label{thm:MR491720}
Let $\pi \colon S \rightarrow \mathbb{P}^1$ be a smooth projective elliptic surface satisfying 
$h^1(S, \str _S) \leq 1$.
Then we have $a_i + 1 = m_i$ or $a_i + \nu _i + 1 = m_i$.
\end{thm}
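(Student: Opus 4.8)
The plan is to determine each coefficient $a_i$ by combining the canonical bundle formula of Proposition~\ref{prop:canonical} with the local structure of the multiple fiber $m_iD_i$, using the global hypothesis $h^1(S,\str_S)\le 1$ only to bound the total amount of wildness. First I would extract the congruence $a_i\equiv -1\pmod{\nu_i}$. Restricting the canonical bundle formula to the reduced fiber $D_i$ and using that $\pi^*(\omega_C\otimes\mc L_\pi^{-1})|_{D_i}$ is trivial (it is pulled back from the base), one gets $\omega_S|_{D_i}\simeq \str_S(a_iD_i)|_{D_i}$. Since $D_i$ is a smooth elliptic curve we have $\omega_{D_i}\simeq\str_{D_i}$, so adjunction $\omega_{D_i}\simeq\omega_S|_{D_i}\otimes\str_S(D_i)|_{D_i}$ yields $\str_S((a_i+1)D_i)|_{D_i}\simeq\str_{D_i}$. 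As $\str_S(D_i)|_{D_i}$ has order $\nu_i$ in $\Pic^0 D_i$, this says $\nu_i\mid (a_i+1)$. Writing $m_i=p^{\alpha_i}\nu_i$ as in \eqref{eqn:mpnu} and $a_i+1=k_i\nu_i$, the bound $0\le a_i\le m_i-1$ of Proposition~\ref{prop:canonical}(i) becomes $1\le k_i\le p^{\alpha_i}$. The two asserted alternatives are exactly $k_i=p^{\alpha_i}$ (i.e.\ $a_i+1=m_i$) and $k_i=p^{\alpha_i}-1$ (i.e.\ $a_i+\nu_i+1=m_i$), so it remains to prove $k_i\ge p^{\alpha_i}-1$. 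For a tame fiber $\nu_i=m_i$, so $\alpha_i=0$ and $k_i=1=p^0$, and there is nothing to prove; thus I may assume $m_iD_i$ is wild.

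Next I would exploit the hypothesis through the Leray spectral sequence of $\pi$. Since $\pi_*\str_S=\str_{\PP^1}$ and $H^1(\PP^1,\str_{\PP^1})=H^2(\PP^1,-)=0$, it gives $h^1(S,\str_S)=h^0(\PP^1,\mc L_\pi)+h^0(\PP^1,\mc T_\pi)$. Because a wild fiber contributes to $\Supp\mc T_\pi$, the inequality $h^1(S,\str_S)\le 1$ forces $h^0(\PP^1,\mc T_\pi)\le 1$; hence there is at most one wild fiber, the class $\mc L_\pi$ has no sections, and the length of $\mc T_\pi$ at the wild fiber is exactly $1$. So I am reduced to a single wild fiber $mD$ (dropping indices), with $m=p^\alpha\nu$, carrying one unit of torsion.

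The remaining and decisive step is local, at the wild fiber. I would analyse the filtration $\str_S\subset\str_S(D)\subset\cdots\subset\str_S(mD)=\pi^*\str_{\PP^1}(Q)$ by pushing the exact sequences $0\to\str_S(jD)\to\str_S((j+1)D)\to \str_S((j+1)D)|_{D}\to 0$ down to $\PP^1$, where $\str_S((j+1)D)|_D=\bigl(\str_S(D)|_D\bigr)^{j+1}$ is trivial precisely when $\nu\mid(j+1)$, which is where $\pi_*$ and $R^1\pi_*$ can jump. Relative duality (using $\omega_{S/C}\simeq\str_S(aD)$ near the fiber) identifies the free part of $R^1\pi_*\str_S(jD)$ with the dual of $\pi_*\str_S((a-j)D)$ and shows that the torsion lengths are symmetric, $\tau_{a-j}=\tau_j$, while $\str_S(mD)=\pi^*\str_{\PP^1}(Q)$ makes them periodic, $\tau_{j+m}=\tau_j$; tracking how the single available unit of torsion can propagate along the $p^\alpha$ jump points is meant to force the first ``extra'' relative section, equivalently the jump of the degree of $R^1\pi_*$, to occur within one step $\nu$ of the full fiber, which is $k\ge p^\alpha-1$. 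I expect this to be the main obstacle: degree and Euler-characteristic bookkeeping alone remains consistent with smaller $k$, so one must input the actual local structure of the wild fiber — the content of the ``Proposition~4'' of \cite{MR491720}, i.e.\ the precise module structure of $R^1\pi_*\str_S(jD)$ over $\str_{\PP^1,Q}$ governed by the $\Pic^\tau$-torsor defining the fiber — to see that a deviation $k=p^\alpha-r$ would cost at least $r$ units of torsion in $\mc T_\pi$. Combined with the budget $h^0(\PP^1,\mc T_\pi)\le 1$ this gives $r\le 1$ and hence the dichotomy.
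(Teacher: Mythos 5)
The paper contains no internal proof of Theorem~\ref{thm:MR491720}: it is imported verbatim from Bombieri--Mumford \cite{MR491720} as a corollary to their Proposition~4, so the only ``proof'' in the paper is the citation itself. Judged on its own terms, your two reductions are correct and are exactly the standard ones. The adjunction computation, restricting the canonical bundle formula to $D_i$ and using $\omega_{D_i}\cong\str_{D_i}$, does give $\nu_i\mid a_i+1$ (one caveat: for the general statement $D_i$ need not be a smooth elliptic curve --- multiple fibers can be of type ${}_m\mathrm{I}_n$ with $n\ge 1$ --- but the reduction is still a cycle of rational curves with trivial dualizing sheaf, so the argument survives with $\omega_{D_i}$ read as the dualizing sheaf; the smoothness assumption is only harmless in the elliptic-ruled context of Lemma~\ref{lem:fiber_on_ruled}, not for the theorem as stated). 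Likewise the Leray computation $h^1(S,\str_S)=h^0(\PP^1,\mc L_\pi)+h^0(\PP^1,\mc T_\pi)$ and the resulting budget $h^0(\PP^1,\mc T_\pi)\le 1$ are right, though this gives torsion length \emph{at most} $1$, not exactly $1$: if it is $0$ there are no wild fibers and Proposition~\ref{prop:canonical}(ii) already finishes.

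The genuine gap is at precisely the point you flag yourself. After writing $m_i=p^{\alpha_i}\nu_i$ and $a_i+1=k_i\nu_i$, the theorem \emph{is} the inequality $k_i\ge p^{\alpha_i}-1$, and this you never establish. Your assertion that a deficit $m-(a+1)=r\nu$ ``would cost at least $r$ units of torsion'' --- equivalently $m\le a+1+\nu\cdot\ell(\mc T_\pi)$ at the wild point --- is indeed the substance of Proposition~4 of \cite{MR491720}, and granting it, your budget argument correctly yields the dichotomy; but you state it as an expectation, and the sketched machinery meant to produce it (pushing the filtration $\str_S(jD)\subset\str_S((j+1)D)$ down to $\PP^1$, the duality symmetry $\tau_{a-j}=\tau_j$, the periodicity $\tau_{j+m}=\tau_j$) is never carried out; as you concede, degree and Euler-characteristic bookkeeping alone cannot rule out smaller $k$, so no proof of the theorem's core is actually on the page. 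In short: as a derivation of the corollary \emph{from} the cited Proposition~4 your proposal is essentially correct and matches how the paper itself uses the result (by citation); as a self-contained proof it is missing exactly the local structure theorem that makes the statement true.
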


%%%
%%%
%%%

\begin{defn}
In the above notation, assume furthermore  that $C= \mathbb{P}^1$ and $\chi (\str _S) = 0$. 
Such an elliptic surface $S$ is said to be of type $(m_1, \dots,m_\lambda | \nu _1, \dots, \nu_\lambda )$.
Assume furthermore that all singular fiber are tame. Then $S$ is said to be of type
 $(m_1, \dots, m_\lambda)$.
\end{defn}

The following is used as a necessary condition for algebraicity of elliptic surface in \cite{MR799664}.

%%%
%%%
%%% \label{thm:algebraic}

\begin{thm}[Theorem 3.3 in \cite{MR799664}]\label{thm:algebraic}
Let $\pi : S \rightarrow \mathbb{P}^1$ be an
 elliptic surface with $\chi(\mc{O}_S)=0$ and suppose that $S$ is of type  
$(m_1, \dots, m_\lambda | \nu _1, \dots, \nu_\lambda )$.
Then for each $i\in \{  1, 2, \dots, \lambda\}$,
there are integers $n_1, n_2, \dots, n_\lambda$ satisfying
\begin{itemize}
\item
$n_i \equiv 1 \ (\bmod \ \nu _i)$, and
\item 
$n_1 /m_1 + n_2 /m_2 + \dots + n_\lambda /m_\lambda  \in \mathbb{Z}$.
\end{itemize}
\end{thm}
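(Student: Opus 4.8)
The plan is to exploit the hypothesis that $S$ is a (projective) elliptic surface to reduce everything to a statement about an isotrivial Jacobian fibration together with a local--global reciprocity on $\PP^1$. First I would observe that $\chi(\str_S)=0$ over $\PP^1$ forces the fibration to be isotrivial with every fiber smooth (of type $\mathrm{I}_0$, the multiple ones being ${}_{m_i}\mathrm{I}_0$), so that the relative Jacobian $J\to\PP^1$ has constant fiber an elliptic curve and carries a section. The surface $S$ is then recovered from $J$ by logarithmic transformations at $Q_1,\dots,Q_\lambda$, the datum at $Q_i$ being the multiplicity $m_i$ together with a local ``rotation'' of order exactly $\nu_i=\ord\str_S(D_i)|_{D_i}$, subject to the relation $m_i=p^{\alpha_i}\nu_i$ of \eqref{eqn:mpnu}. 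The integers $n_i$ of the statement are to be read off as these local rotation numbers.

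Next I would use algebraicity. Since $S$ is projective it carries an irreducible horizontal curve (a multisection), and since $F\sim m_jD_j$ any such curve meets the fibers so that its fiber degree is divisible by every $m_j$; this multisection is what detects the torsor class of $S$ over $J$ and makes each local invariant well defined. The content of the theorem is then that the collection of local invariants of this torsor is \emph{globally compatible on} $\PP^1$. Because $\PP^1$ is simply connected there is no room for a nontrivial global class, so the only constraint is that the local contributions cancel.

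From here the two conditions fall out separately. The congruence is local: normalizing the torsor datum at the distinguished fiber so that it matches the chosen generator $\str_S(D_i)|_{D_i}$ of the cyclic group of order $\nu_i$ forces $n_i\equiv 1\pmod{\nu_i}$, and the freedom to perform this normalization at any single index accounts for the ``for each $i$'' quantifier, the remaining $n_j$ staying arbitrary integers. The sum condition is global: the requirement that the local invariants $n_j/m_j\in\Q/\Z$ cancel is exactly the statement that their sum is trivial in $\Q/\Z$, i.e. $\sum_j n_j/m_j\in\Z$. Equivalently, this is the residue theorem on $\PP^1$ applied to the rational contributions accumulated at the points $Q_j$, or, concretely, the condition for a cyclic cover of $\PP^1$ with the prescribed ramification to exist so that the pulled-back fibration loses its multiple fibers.

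I expect the main obstacle to be the local step in positive characteristic: proving that the local invariant at a multiple fiber ${}_{m_i}\mathrm{I}_0$ is genuinely governed by the tame order $\nu_i$ and not by the full multiplicity $m_i$, so that the correct congruence is modulo $\nu_i$ rather than $m_i$. This cannot be seen from the numerical relation $F\sim m_iD_i$ alone; it requires isolating the inseparable factor $p^{\alpha_i}$ attached to a wild fiber, using the structure of $\mathbb{R}^1\pi_*\str_S\simeq\mathcal{L}_\pi\oplus\mathcal{T}_\pi$, the order of $\str_S(D_i)|_{D_i}$ recorded in \eqref{eqn:mpnu}, and the replacement of topological monodromy by tame fundamental-group (and relative Frobenius) data. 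Handling this is precisely what upgrades a naive congruence modulo $m_i$ to the asserted congruence modulo $\nu_i$.
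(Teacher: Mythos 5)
A preliminary point: this paper contains no proof of the statement at all. It is quoted, with attribution, as Theorem 3.3 of Katsura--Ueno \cite{MR799664} and used as a black box (through Remark \ref{rem:m1m2}), so your proposal has to be measured against Katsura--Ueno's argument, which rests on their structure theory of multiple fibers in characteristic $p$.

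Your sketch is the classical complex-analytic picture of Kodaira: isotriviality from $\chi(\str_S)=0$ (that step is fine), constant Jacobian $E\times\PP^1$, $S$ recovered by logarithmic transformations with cyclic ``rotation'' data, local invariants $n_j/m_j\in\Q/\Z$, and a reciprocity forcing them to cancel. In characteristic $0$, where every multiple fiber is tame and $\nu_i=m_i$, this can be pushed through. But everything the theorem says beyond the classical case concerns characteristic $p$, where $m_i=p^{\alpha_i}\nu_i$ with $\alpha_i>0$ can occur, and there your central device fails: a logarithmic transformation whose local datum is a translation of order exactly $\nu_i$ produces a multiple fiber of multiplicity $\nu_i$, never $p^{\alpha_i}\nu_i>\nu_i$ (the subgroup of the cyclic group acting trivially on the elliptic factor fixes the central fiber, so the multiplicity of the quotient fiber equals the order of the translation). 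Hence wild fibers --- precisely the fibers for which ``mod $\nu_i$ versus mod $m_i$'' is an issue --- are not described by your local model at all; they arise from quotients by non-\'etale group schemes ($\alpha_p$, $\mu_p$) or wildly ramified actions, and building that local theory is the bulk of \cite{MR799664}. You flag this as ``the main obstacle''; it is not an obstacle to be smoothed over later, it is the theorem.

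The global step has the same problem. ``$\PP^1$ is simply connected, so the only constraint is that the local contributions cancel in $\Q/\Z$'' is not an argument, and the intuition behind it is false in characteristic $p$: the relevant obstruction group is a flat-cohomology (Ogg--Shafarevich type) group over $k(\PP^1)$, and affine opens of $\PP^1$ have enormous wild (Artin--Schreier) fundamental groups --- which is exactly why wild fibers exist in the first place. The reciprocity $\sum_j n_j/m_j\in\Z$ must be extracted from projectivity concretely; this is where the multisection genuinely enters in Katsura--Ueno, combined with their local analysis in which the normal bundle $\str_S(D_i)|_{D_i}$ detects only the tame part (of order $\nu_i$) of the local datum --- that is the source of the congruence $n_i\equiv 1\pmod{\nu_i}$. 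In your sketch the multisection is mentioned but never used in the derivation, and the congruence is obtained by ``normalizing'' a local rotation number whose existence was never established. So: the right heuristic in characteristic $0$, but a genuine gap --- the characteristic-$p$ local structure of wild fibers and the actual reciprocity argument --- at exactly the point where the statement has content.
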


\begin{rem}\label{rem:m1m2}
Let $S$ be an elliptic surface satisfying $\chi(\str _S)=0$, and $C=\PP^1$.
In the following cases, the information on $m$ and $\nu$ are easily deduced 
from Theorem \ref{thm:algebraic}.
\begin{enumerate}
\item (\cite[Corollary 4.2]{MR799664})
Suppose that $S$ has a unique multiple fiber, and the type of $S$ is  $(m | \nu)$.
Then the multiple fiber is wild, $m = p^\alpha$ for some integer $\alpha>0$, and $\nu = 1$.  
\item (\cite[Corollary 4.3]{MR799664})
Suppose that $S$ is of type $(m_1,m_2)$.
Then  there are integers $n_2,n'_1$ satisfying 
$$
1/m_1 + n_2 /m_2,\quad  n'_1/m_1 + 1/m_2  \in \mathbb{Z}.
$$
These conditions imply the equality  $m_1=m_2$. 
\end{enumerate}
\end{rem}

%%%%%%%%%%%%%%%%%%%%%%%%%%%%%%%%%%%%%%%%%%%%%%%%%%%%%%%%%%%%%%%%%%%%%%%%%%%%%%%%%%%%%%%%%%%%%%%%
%%%%%%%%%%%%%%%%%%%%%%%%%%%%%%%%%%%%%%%%%%%%%%%%%%%%%%%%%%%%%%%%%%%%%%%%%%%%%%%%%%%%%%%%%%%%%%%%

\subsection{Atiyah's result}\label{subsec:atiyah}
Atiyah classified indecomposable vector bundles on elliptic curves \cite{MR131423}.
We summarize his results we need below.

Let $\mc M_E (r,d)=\mc M (r,d)$ be the set of isomorphism classes of indecomposable vector 
bundles of rank $r$ and degree $d$ over an elliptic curve $E$.
For $\mc E\in \mc M(r,d)$, every other vector bundle in  $\mc M(r,d)$
is of the form $\mc E\otimes \mc L$ for some $\mc L\in \Pic ^0 E=\mc M(1,0)$. 
There is the unique element $\mc E_{r,0}$  in $\mc M(r,0)$ such that
$h^0(\mc E_{r,0})\ne 0$. 
Furthermore for $\mc E\in \mc M(r,0)$, we have
\begin{align*}
&h^0(E,\mc E)=h^1(E,\mc E)=0 \mbox{ when $\mc E\ne \mc E_{r,0}$, and } \\
&h^0(E,\mc E_{r,0})=h^1(E,\mc E_{r,0})=1  
\end{align*}
Actually we can define $\mc E_{r,0}$ by 
putting $\mc E_{1,0}=\mc O_E$ and
the unique non-trivial extension 
\begin{equation}\label{eqn:Er0}
0\to \mc E_{r,0} \to \mc E_{r+1,0} \to \mc O_E \to 0
\end{equation}
inductively. We can also see that $\mc{E}_{r,0}\cong(\mc{E}_{r,0})^\vee$.

 Next, let us define the unique indecomposable vector bundle $\mc E_{P}$ of rank $2$ for a point $P\in E$,
which fits into the following non-split exact sequence
\begin{equation}\label{eqn:Er1}
0\to \mc O_E \to \mc E_{P} \to \mc O_E(P) \to 0.
\end{equation}

Although $\mc{E}_P\not\cong\mc{E}_Q$ for distinct points 
$P,Q\in E$, we have $\mathbb{P}(\mc{E}_P)\cong\mathbb{P}(\mc{E}_Q)$ 
\cite[Theorem V.2.15]{MR0463157}. We can also see that 
$$\mc{M}(2,1)=\{ \mc{E}_P \mid P\in E\}.$$

\subsection{Elliptic ruled surfaces}\label{subsec:elliptic_ruled}
We use the terminology on ruled surface in \cite[V.2]{MR0463157}, without specified otherwise.
Let $f \colon S \rightarrow E$ be a  $\PP^1$-bundle structure over an  elliptic curve $E$.
Denote a general fiber of $f$ by $F_f$. 
Then there is a rank $2$ vector bundle $\mc E$ on $E$ such that $S\cong \mathbb{P}(\mathcal{E})$ 
and we assume that $\mc E$ is normalized. 
Put $e=-\deg \mathcal{E}$ and take a minimal section $C_0$ with ${C_0}^2=-e$.
Let us define a divisor $D$ which satisfies $\mc{O}_S(D)\cong \det \mc{E}$.
Then we have 
$$K_{S} \sim -2C_0 -f^* D    \equiv -2C_0 -eF_f .$$

Note that if $S$ has an elliptic fibration $\pi\colon S\to \PP^1$, then $-K_S$ is nef.  
Furthermore, we can easily see that $-K_S$ is nef if and only if $e=0,-1$.
We can also deduce from \cite[V.2]{MR0463157} that 
\begin{equation}\label{eqn:possibility}
\mc{E}\cong
\begin{cases}
\mc{O}_E\oplus \mc{L} \ \mbox{ or }\ \mc{E}_{2,0} \quad &\mbox{in the case $e=0$}\\
\mc{E}_Q \quad &\mbox{in the case $e=-1$}
\end{cases}
\end{equation}
for some $\mc{L}\in \Pic^0 E$ and $Q\in E$.

Moreover, any rational curves on $S$ cannot dominate the elliptic curve $E$ by $f$.  
This fact immediately implies the following.

\begin{lem}\label{lem:fiber_on_ruled}
An elliptic ruled surface $S$ has no quasi-elliptic fibrations. Moreover, if $S$ has an elliptic fibration,
its singular fiber is of the of the type ${}_m\text{I}_0$ for $m>0$, that is, a multiple fiber whose reduction is a smooth elliptic curve. 
\end{lem}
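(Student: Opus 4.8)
The plan is to exploit the displayed fact preceding the statement: no rational curve on $S$ dominates $E$ under $f$. First I would turn this into a clean structural statement. Given any rational curve $R\subset S$, its image $f(R)$ is a proper closed subset of $E$, hence a point, so $R$ is contained in a single fibre $f^{-1}(f(R))\cong \PP^1$; being a curve inside an irreducible curve, $R$ must equal that fibre. Thus \emph{every rational curve on $S$ is a smooth ruling fibre, in particular smooth with self-intersection $0$}, and $S$ carries no singular (nodal or cuspidal) rational curve. This observation is the engine for both assertions.

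For the quasi-elliptic part, recall that the general fibre of a quasi-elliptic fibration is an integral cuspidal rational curve, i.e.\ a singular rational curve. This is incompatible with the observation above, so no quasi-elliptic fibration can exist.

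Now let $\pi\colon S\to C$ be an elliptic fibration. Every fibre $F$ satisfies $F^2=0$ and, by relative minimality and connectedness of fibres, has arithmetic genus $p_a(F)=1$; together with $F^2=0$ and adjunction this gives $F\cdot K_S=0$. I would first rule out reducible fibres. If $F=\sum a_i C_i$ has at least two components, Zariski's lemma combined with connectedness of $F$ forces $C_i^2<0$ for each $i$. Since a curve of negative self-intersection cannot be rational (rational curves have self-intersection $0$ by the observation), each $C_i$ is non-rational, so $p_a(C_i)\ge 1$; adjunction then yields $C_i\cdot K_S=2p_a(C_i)-2-C_i^2\ge -C_i^2>0$. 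Summing, $F\cdot K_S=\sum a_i\,(C_i\cdot K_S)>0$, contradicting $F\cdot K_S=0$. Hence every fibre is irreducible, of the form $mD$ with $D$ reduced and $m\ge 1$.

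Finally I would identify $D$. From $(mD)^2=0$ we get $D^2=0$, and $F\cdot K_S=0$ gives $D\cdot K_S=0$, so adjunction yields $p_a(D)=1$. If $D$ were rational it would be singular (as $p_a(D)=1>0$), again contradicting the observation; hence $D$ is non-rational, and $1=p_a(D)\ge p_g(D)\ge 1$ forces $p_g(D)=p_a(D)=1$, so $D$ is a smooth elliptic curve. This is precisely a fibre of type ${}_m\mathrm{I}_0$. I do not expect a serious obstacle here: the only delicate input is the structural observation that all rational curves on $S$ are smooth ruling fibres, and once that is recorded the two claims reduce to routine applications of Zariski's lemma and the adjunction formula.
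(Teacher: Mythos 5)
Your proposal is correct and follows exactly the route the paper intends: the paper derives the lemma "immediately" from the observation that no rational curve on $S$ can dominate $E$ via $f$, and your argument is a careful elaboration of that same observation (every rational curve is a smooth ruling fibre, hence no cuspidal rational general fibre, no reducible fibres by Zariski's lemma plus adjunction, and irreducible fibres have smooth elliptic reduction). No gaps; the details you fill in are precisely the ones the paper leaves to the reader.
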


%%%%%%%%%%%%%%%%%%%%%%%%%%%%%%%%%%%%%%
\subsection{Non-existence of elliptic fibrations}
In this subsection, we show the non-existence of elliptic fibrations on certain elliptic ruled surfaces.
First we prove the following two lemmas. 

\begin{lem}\label{lem:p0}
For $m\ge 0$, we have
\begin{equation*}
h^0(E,\Sym ^m\mc E_{2,0})=
\begin{cases}
1 & \text{$p=0$ or $m<p$} \\
2 & \text{$m=p>0$}.
\end{cases}
\end{equation*}
\end{lem}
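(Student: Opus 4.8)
The plan is to induct on $m$ via multiplication by the canonical section, isolating the role of the characteristic in a single connecting homomorphism. First I would fix the non-split extension $0\to \str_E\xrightarrow{x}\mc E_{2,0}\to \str_E\to 0$ defining $\mc E_{2,0}$, whose class $\epsilon\in H^1(E,\str_E)=k$ is nonzero, and let $x\in H^0(E,\mc E_{2,0})$ be the image of $1$. Multiplying by $x$ gives, for each $m\ge 1$, the short exact sequence
\begin{equation*}
0\to \Sym^{m-1}\mc E_{2,0}\xrightarrow{\,\cdot x\,}\Sym^m\mc E_{2,0}\to \str_E\to 0, \tag{$*_m$}
\end{equation*}
since $\Sym^m$ of the quotient $\str_E$ is $\str_E$. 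As $x^m$ is a global section we have $h^0\ge 1$ throughout, and the long exact sequence of $(*_m)$ gives $h^0(\Sym^m\mc E_{2,0})=h^0(\Sym^{m-1}\mc E_{2,0})+\dim\ker\delta_m$, where $\delta_m\colon H^0(\str_E)=k\to H^1(\Sym^{m-1}\mc E_{2,0})$ is the connecting map. Starting from $h^0(\Sym^0\mc E_{2,0})=1$, the whole statement thus reduces to deciding whether $\delta_m$ is injective.

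To compute $\delta_m$ I would pass to \v Cech cohomology. Choosing local lifts $y_\alpha$ of $1\in\str_E$ with $y_\alpha-y_\beta=c_{\alpha\beta}x$, where $\{c_{\alpha\beta}\}$ represents $\epsilon$, the local sections $y_\alpha^m$ lift the generator of $H^0(\str_E)$, and
\[
y_\alpha^m-y_\beta^m=\sum_{k=1}^m\binom{m}{k}c_{\alpha\beta}^{\,k}x^{k}y_\beta^{\,m-k}
\]
represents $\delta_m(1)$ as a cocycle valued in $x\cdot\Sym^{m-1}\mc E_{2,0}\cong \Sym^{m-1}\mc E_{2,0}$. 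For $p=0$ or $1\le m<p$, projecting this class onto the top quotient $\Sym^{m-1}\mc E_{2,0}\to\str_E$ (setting $x=0$) kills every term except $k=1$ and leaves $m\,\epsilon\ne 0$; hence $\delta_m(1)\ne 0$, so $\delta_m$ is injective and $h^0(\Sym^m\mc E_{2,0})=h^0(\Sym^{m-1}\mc E_{2,0})=1$ by induction.

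The main obstacle is the case $m=p>0$. Here every $\binom{p}{k}$ with $1\le k\le p-1$ vanishes, so only the $k=p$ term survives and $\delta_p(1)=\iota_*(\epsilon^{(p)})$, the image of the Frobenius-twisted class $\epsilon^{(p)}=[\{c_{\alpha\beta}^{\,p}\}]$ under the bottom inclusion $\iota\colon\str_E=\str_E\cdot x^{p-1}\hookrightarrow\Sym^{p-1}\mc E_{2,0}$. The difficulty is that projecting to the top quotient now only recovers $p\,\epsilon=0$, which is inconclusive, and $\epsilon^{(p)}$ itself is nonzero precisely in the ordinary case; so I must prove $\iota_*=0$ on $H^1$ uniformly in the ordinary and supersingular cases.

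I would deduce this by duality. Since $p-1<p$, symmetric powers commute with dualizing, so $\Sym^{p-1}\mc E_{2,0}$ is self-dual (as $\det\mc E_{2,0}\cong\str_E$ gives $\mc E_{2,0}^\vee\cong\mc E_{2,0}$), and under this self-duality the bottom inclusion $\iota$ is identified with the top quotient $q\colon\Sym^{p-1}\mc E_{2,0}\to\str_E$ of $(*_{p-1})$. By Serre duality on $E$ (where $\omega_E\cong\str_E$), the map $\iota_*$ on $H^1$ is dual to $q_*$ on $H^0$; but the long exact sequence of $(*_{p-1})$ forces $q_*\colon H^0(\Sym^{p-1}\mc E_{2,0})\to H^0(\str_E)$ to vanish, because the injection $H^0(\Sym^{p-2}\mc E_{2,0})\hookrightarrow H^0(\Sym^{p-1}\mc E_{2,0})$ between one-dimensional spaces (with $\Sym^0\mc E_{2,0}=\str_E$ when $p=2$) is an isomorphism. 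Hence $\iota_*=0$, so $\delta_p=0$ regardless of ordinariness, and $h^0(\Sym^p\mc E_{2,0})=h^0(\Sym^{p-1}\mc E_{2,0})+1=2$, completing the induction.
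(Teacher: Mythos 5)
Your proof is correct, but at the crucial case $m=p$ it takes a genuinely different route from the paper's. Both arguments induct along the same filtration sequence $0\to \Sym^{m-1}\mc E_{2,0}\to \Sym^{m}\mc E_{2,0}\to \str_E\to 0$ (the paper's (\ref{eqn:indecomposable})). For $m<p$ the paper simply quotes Atiyah's Theorem 9 to identify $\Sym^{m}\mc E_{2,0}\cong \mc E_{m+1,0}$, whence $h^0=1$; you reprove this self-containedly by computing the connecting map $\delta_m$ in \v{C}ech terms and noting that its image in the top quotient is $m\epsilon\neq 0$. At $m=p$ the paper writes out the transition matrices of $\Sym^p\mc E_{2,0}$, uses the vanishing of $\binom{p}{k}$ for $0<k<p$, and constructs an explicit conjugation (via $f_{ij}^p-\lambda f_{ij}=g_i-g_j$, where $\lambda$ encodes the Frobenius action on $H^1(E,\str_E)$, bijective or zero according as $E$ is ordinary or supersingular) that splits the sequence, so $\Sym^p\mc E_{2,0}\cong \str_E\oplus \Sym^{p-1}\mc E_{2,0}$. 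You instead identify $\delta_p(1)=\iota_*(\epsilon^{(p)})$ --- the same Frobenius-twisted cocycle $\{c_{\alpha\beta}^p\}$ that drives the paper's computation --- and kill it uniformly by proving $\iota_*=0$ on $H^1$, using self-duality of $\Sym^{p-1}\mc E_{2,0}$ (valid as $p-1<p$, where $\Sym^{p-1}$ agrees with the divided power $\Gamma^{p-1}$ and hence commutes with duals) plus Serre duality, reducing to the vanishing of $q_*$ on the one-dimensional $H^0(\Sym^{p-1}\mc E_{2,0})=k\cdot x^{p-1}$. I checked the one step you assert without proof: under the pairing induced by $\det(\cdot,\cdot)$ on $\mc E_{2,0}$, the matrix in the basis $x^ky^{p-1-k}$ is anti-triangular with unit anti-diagonal entries (e.g.\ $\langle x^{p-1},y^{p-1}\rangle=(p-1)!\,\det(x,y)^{p-1}$, and $(p-1)!\equiv -1 \bmod p$), so $\iota$ is indeed dual to $q$. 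The trade-off: your argument needs no ordinary/supersingular case division and no citation of Atiyah, but it only yields the dimension count $h^0=2$, whereas the paper's computation proves the stronger splitting statement and makes the role of ordinarity (the scalar $\lambda$) visible; since only the value of $h^0$ is used later, your conclusion suffices. As an aside, your duality step is avoidable: $\iota_*\epsilon=0$ outright, because the $0$-cochain $\{x^{p-2}y_\alpha\}$ in $\Sym^{p-1}\mc E_{2,0}$ has coboundary $c_{\alpha\beta}x^{p-1}$, so in the ordinary case $\iota_*\epsilon^{(p)}=\lambda\,\iota_*\epsilon+\iota_*(\partial g)=0$ and in the supersingular case $\epsilon^{(p)}=0$ --- which is, in effect, the paper's splitting computation in cohomological form.
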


\begin{proof} 
There is a natural exact sequence
\begin{equation}\label{eqn:indecomposable}
0\to \Sym ^{r-1}\mc E_{2,0} \to  \Sym ^{r}\mc E_{2,0} \to \mc O_E \to 0,
\end{equation}
and Atiyah observed in \cite[Theorem 9]{MR131423}  that  in the case $p=0$ or $1\le r<p$,
$\Sym ^{r-1}\mc E_{2,0}\cong \mc E_{r,0}$, and we can see that (\ref{eqn:Er0}) 
is isomorphic to (\ref{eqn:indecomposable}).

In the case $r=p>0$, we can see that the exact sequence 
(\ref{eqn:indecomposable}) splits as follows.
Let us consider an affine open covering $\{U_i\}_i$ of $E$. 
Note that $\mc E_{2,0}$ is trivialized on each $U_i$.
Transition functions of $\mc E_{2,0}$ on $U_i\cap U_j$ can be describes as 
\begin{equation*}
\begin{pmatrix}
1&f_{ij} \\
0& 1
\end{pmatrix}
\end{equation*}
by (\ref{eqn:indecomposable}), where $f_{ij}$ is a regular function on $U_i\cap U_j$.
Then the
transition functions of $\Sym ^m\mc E_{2,0}$ is 
\begin{equation*}
\begin{pmatrix}
1&mf_{ij}& _mC_2 f_{ij}^2&_mC_3 f_{ij}^3     &\cdots & mf_{ij}^{m-1} &f_{ij}^m \\
0& 1     & (m-1)f_{ij}   &_{m-1}C_2 f_{ij}^2 &\cdots & (m-1)f_{ij}^{m-2} &f_{ij}^{m-1}\\
0& 0     & 1             & (m-2)f_{ij}       &\cdots & (m-2)f_{ij}^{m-3} &f_{ij}^{m-2}\\
\vdots &\vdots& \vdots   &\vdots   &\cdots                & \vdots & \vdots\\
0& 0     & 0             & 0       &\ldots           & 1 & f_{ij}\\
0& 0     & 0             & 0       &\ldots           & 0 &1
\end{pmatrix}.
\end{equation*}
We call this $(m+1) \times (m+1)$ matrix $A_{ij}^{(m)}$.
When $m=p$, it is the following form
$$
\arraycolsep5pt
\left(
\begin{array}{@{\,}c|cccc@{\,}}
1&0&\cdots&0&f_{ij}^{p}\\
\hline
0&&&&\\
\vdots&\multicolumn{4}{c}{\raisebox{-10pt}[0pt][0pt]{\Huge $A_{ij}^{(p-1)}$}}\\
0&&&&\\
\end{array}
\right),
$$
since $_pC_k$ is divisible by $p$.
When $E$ is an ordinary (resp. supersingular) elliptic curve,
the Frobenius morphism $\Fr$ induces a bijection (resp. a zero map)
$$
\Fr^*\colon H^1(E_p,\mc O_{E_p})\to H^1(E,\mc O_{E}).
$$
Thus we put $\{f_{ij}^p\}=\lambda \{f_{ij}\}\in H^1(E,\mc O_E)$
for some $\lambda \in k^*$ (resp. $\lambda =0$). In particular,
there are regular functions $g_i$ on $U_i$ such that 
$f_{ij}^p- \lambda f_{ij}=g_i-g_j$.

Take an $(p+1)\times (p+1)$ matrix $P_{i}$ as 
$$
\arraycolsep5pt
\left(
\begin{array}{@{\,}c|ccccc@{\,}}
1&0&\cdots&0&\lambda &-g_i\\
\hline
0&&&&\\
\vdots&\multicolumn{4}{c}{\raisebox{-10pt}[0pt][0pt]{\Huge $I_{p}$}}\\
0&&&&\\
\end{array}
\right).
$$
Here $I_{p}$ is the $p\times p$ identity matrix. 
Then we can see that 
$$
A_{ij}^{(p)}P_i=P_j\widetilde{A_{ij}}^{(p)},
$$
where we put
$$
\widetilde{A_{ij}}^{(p)}:=
\arraycolsep5pt
\left(
\begin{array}{@{\,}c|cccc@{\,}}
1&0&\cdots&0&0\\
\hline
0& &      &\\
\vdots&\multicolumn{4}{c}{\raisebox{-10pt}[0pt][0pt]{\Huge $A_{ij}^{(p-1)}$}}\\
0& &      &\\
\end{array}
\right).
$$
This means that the vector bundles defined by the transition functions $\{A_{ij}\}$
and $\{\widetilde{A_{ij}}\}$ are isomorphic to each other.
This completes the proof.
\end{proof}

\begin{lem}\label{lem:p1}
Take $\mc L\in \Pic ^0E$ and $m\in \Z_{\ge 0}$. 
Then we have
\begin{equation*}
h^0(E,\Sym^m( \str_E \oplus \mathcal{L} ))=
\begin{cases}
1 & m<\ord \mc L\\
2 & m=\ord \mc L.
\end{cases}
\end{equation*}
\end{lem}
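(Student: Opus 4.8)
The goal is to compute $h^0(E,\Sym^m(\str_E\oplus\mc L))$ for $\mc L\in\Pic^0 E$. The plan is to decompose the symmetric power completely, using the fact that $\str_E\oplus\mc L$ is a decomposable bundle. For a direct sum of line bundles, the symmetric power splits as a direct sum of tensor products of the summands; concretely,
\begin{equation*}
\Sym^m(\str_E\oplus\mc L)\cong\bigoplus_{j=0}^{m}\Sym^{m-j}(\str_E)\otimes\Sym^{j}(\mc L)\cong\bigoplus_{j=0}^{m}\mc L^{\otimes j}.
\end{equation*}
So the first step is to record this splitting and reduce the cohomology computation to $h^0(E,\sum_{j=0}^m \mc L^{\otimes j})=\sum_{j=0}^m h^0(E,\mc L^{\otimes j})$.

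The second step is to evaluate each summand $h^0(E,\mc L^{\otimes j})$. Since $\mc L\in\Pic^0 E$, each power $\mc L^{\otimes j}$ is again a degree-zero line bundle on the elliptic curve $E$. A degree-zero line bundle on $E$ has a nonzero global section precisely when it is trivial, in which case $h^0=1$; otherwise $h^0=0$. Thus $h^0(E,\mc L^{\otimes j})=1$ if $\mc L^{\otimes j}\cong\str_E$, i.e.\ if $\ord\mc L$ divides $j$, and $h^0(E,\mc L^{\otimes j})=0$ otherwise. (When $\ord\mc L=\infty$ the only such $j$ in range is $j=0$.)

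The third step is simply to count how many indices $j\in\{0,1,\dots,m\}$ satisfy $\ord\mc L\mid j$. If $m<\ord\mc L$, then the only multiple of $\ord\mc L$ in the range is $j=0$, so exactly one summand contributes and $h^0=1$. If $m=\ord\mc L$, then both $j=0$ and $j=\ord\mc L=m$ are multiples in range, giving exactly two contributing summands and $h^0=2$. This matches the two cases in the statement. I do not anticipate any serious obstacle here; the only point requiring a little care is justifying the splitting of the symmetric power of a direct sum (which is standard and characteristic-independent, since it is a statement about the underlying $\str_E$-module structure), and correctly handling the convention $\ord\mc L=\infty$ so that the case $m<\ord\mc L$ covers all $m$ when $\mc L$ is non-torsion. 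The heart of the argument is the elementary fact that $h^0$ of a degree-zero line bundle on an elliptic curve detects triviality, together with the order-counting in the final step.
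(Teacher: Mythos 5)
Your proof is correct and follows essentially the same route as the paper: the paper's proof consists of exactly the splitting $\Sym^m(\str_E\oplus\mc L)\cong\str_E\oplus\mc L\oplus\cdots\oplus\mc L^{\otimes m}$, from which the count is immediate. Your write-up just makes explicit the two standard facts the paper leaves implicit (that a degree-zero line bundle on $E$ has a section iff it is trivial, and the counting of indices $j$ with $\ord\mc L\mid j$).
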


\begin{proof}
The assertion follows from the isomorphism
\begin{align*}\label{ali:sym}
\Sym ^m (\str_E \oplus \mathcal{L}) 
\cong \str_E \oplus \mathcal{L} \oplus \cdots \oplus {\mathcal{L}}^{\otimes m}.
\end{align*}
\end{proof}

\begin{prop}\label{prop:non-existence}
Let $\mc{E}$ be a vector bundle of rank $2$ on an elliptic curve $E$. In the following cases, $S:=\PP_E(\mc{E})$ has 
no elliptic fibrations.
\begin{enumerate}
\item
$p=0$ and $\mc{E}=\mc{E}_{2,0}$. 
\item
$p\ge 0$ and $\mc{E}=\mc{O}_E\oplus \mc{L}$ for $\mc{L}\in \Pic ^0E$
with $\ord \mc{L}=\infty$. 
\end{enumerate}
\end{prop}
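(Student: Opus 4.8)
The plan is to reduce the existence of an elliptic fibration to a cohomological statement about symmetric powers of $\mc{E}$, which is exactly what Lemmas \ref{lem:p0} and \ref{lem:p1} (in slightly twisted form) control. In both cases $\mc{E}$ is normalized of degree $0$, so $e=0$, and by the formula recorded in \S\ref{subsec:elliptic_ruled} we have $K_S\equiv -2C_0$, while $\mathrm{Num}(S)=\Z C_0\oplus \Z F_f$ with $C_0^2=0$, $C_0\cdot F_f=1$, $F_f^2=0$. Suppose for contradiction that $\pi\colon S\to C$ is an elliptic fibration. First I would pin down the base curve $C$: since $S$ is ruled over $E$ its Albanese variety is $E$ with Albanese map $f$, so any morphism to a curve of genus $\ge 1$ factors through $f$; this would force the fibers of $\pi$ to be unions of $f$-fibers, i.e.\ unions of rational curves, contradicting Lemma \ref{lem:fiber_on_ruled} (the general fiber is a smooth elliptic curve). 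Hence $C=\PP^1$.

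Next I would determine the numerical class of a general fiber $F_\pi$. As $F_\pi$ is a smooth elliptic curve with $F_\pi^2=0$, adjunction gives $K_S\cdot F_\pi=0$; writing $F_\pi\equiv aC_0+bF_f$ and intersecting with $K_S\equiv-2C_0$ yields $b=0$, while effectivity forces $a>0$. Using $\Pic(S)=\Z\cdot\str_S(1)\oplus f^*\Pic(E)$ together with $\str_S(1)\equiv C_0$, I obtain $\str_S(F_\pi)\cong \str_S(a)\otimes f^*N$ for some $N\in\Pic^0 E$. Since $C=\PP^1$ we have $\str_S(F_\pi)\cong \pi^*\str_{\PP^1}(1)$, hence $h^0(S,\str_S(F_\pi))=2$; on the other hand the projection formula and $f_*\str_S(a)\cong\Sym^a\mc{E}$ give
$$h^0(S,\str_S(F_\pi))=h^0(E,\Sym^a\mc{E}\otimes N).$$
Thus an elliptic fibration would produce $a>0$ and $N\in\Pic^0E$ with $h^0(E,\Sym^a\mc{E}\otimes N)=2$.

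It then remains to show this is impossible in each case. In case (i), since $p=0$ we have $\Sym^a\mc{E}_{2,0}\cong\mc{E}_{a+1,0}$ by Atiyah, so $\Sym^a\mc{E}_{2,0}\otimes N$ lies in $\mc{M}(a+1,0)$ and therefore has $h^0\le 1$ by the dichotomy recalled in \S\ref{subsec:atiyah}. In case (ii), $\Sym^a(\str_E\oplus\mc{L})\otimes N\cong\bigoplus_{j=0}^a(\mc{L}^{\otimes j}\otimes N)$, whose $h^0$ counts the indices $j$ with $\mc{L}^{\otimes j}\otimes N\cong\str_E$; because $\ord\mc{L}=\infty$ the classes $\mc{L}^{\otimes j}$ are pairwise distinct, so at most one such $j$ exists and $h^0\le 1$. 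Either way we contradict $h^0=2$, which completes the proof.

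The main obstacle I anticipate is the bookkeeping in the middle step: correctly identifying $\str_S(F_\pi)$ up to the $\Pic^0 E$-ambiguity $N$ and confirming that the bound $h^0\le 1$ survives this twist for \emph{every} $N$, rather than only for $N=\str_E$, which is what the stated lemmas give directly. Establishing that the base is $\PP^1$ is the other delicate point, since only over $\PP^1$ does a general fiber move in a pencil and thereby force $h^0=2$; over a base of positive genus a single elliptic fiber would give only $h^0=1$ and no contradiction.
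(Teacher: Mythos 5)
Your proof is correct, but it routes around the paper's key step in a genuinely different way. The paper's proof is shorter at the front end: it simply asserts that any elliptic fibration on $S$ is defined by the pluri-anticanonical system $|-mK_S|$ for some $m>0$ (this rests implicitly on the canonical bundle formula, Proposition \ref{prop:canonical}, which makes $-K_S$ a positive rational multiple of the fiber class), so $h^0(S,\omega_S^{\otimes -m})\ge 2$, and then computes $h^0(S,\omega_S^{\otimes -m})=h^0(E,\Sym^{2m}\mc{E}\otimes\det\mc{E}^{\otimes m})\le 1$ via Lemmas \ref{lem:p0} and \ref{lem:p1}. You instead work with the fibration's own pencil $\pi^*\str_{\PP^1}(1)$: you must then do two extra pieces of work the paper avoids --- pinning down the base as $\PP^1$ (your Albanese argument is fine; note the paper's Lemma \ref{lem:fiber_on_ruled} alone does not give this) and identifying the class of $F_\pi$ as $\str_S(a)\otimes f^*N$ with an a priori unknown twist $N\in\Pic^0E$ --- but in exchange you get $h^0=2$ on the nose and never need the canonical bundle formula or the claim about $|-mK_S|$. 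Your insistence on handling an arbitrary twist $N$ is not pedantry: it is genuinely required by your setup, and it is in fact slightly more careful than the paper's own computation, since in case (ii) the bundle appearing in the paper is $\Sym^{2m}(\str_E\oplus\mc{L})\otimes\mc{L}^{\otimes m}\cong\bigoplus_{j=m}^{3m}\mc{L}^{\otimes j}$, which is a twisted form not literally covered by Lemma \ref{lem:p1} as stated (its $h^0$ is $0$, not $1$, when $\ord\mc{L}=\infty$ --- harmless either way, since all that matters is $h^0\le 1<2$). Your case (i) correctly substitutes Atiyah's dichotomy for $\mc{M}(a+1,0)$ (at most one class in each twist family has sections, and then exactly one) for the untwisted statement of Lemma \ref{lem:p0}, using $\Sym^a\mc{E}_{2,0}\cong\mc{E}_{a+1,0}$ in characteristic $0$, which is the same Atiyah input the paper uses inside its proof of Lemma \ref{lem:p0}.
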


\begin{proof}
In both cases, we have $e=0$ and hence $-K_S\sim 2C_0+f^*D$, where $f\colon S\to E$ is the $\PP^1$-bundle structure and $D$ is a divisor satisfying $\mc{O}_S(D)\cong \det \mc{E}$.
If $S$ has an elliptic fibration, the complete linear system $|-mK_S|$ for some $m>0$ defines it, and hence
$h^0(S,\omega_S^{\otimes {-m}})\ge 2$. On the other hand, we have
$$h^0(S,\omega_S^{\otimes {-m}})=h^0(S,\mc{O}_S(2mC_0+mf^*D))= h^0(E,\Sym^{2m}(\mc{E})\otimes \mc{O}_S(mD)),$$
which equals to $1$ in the case (i) by Lemma \ref{lem:p0}, and in the case (ii) by Lemma \ref{lem:p1}.
Thus the assertion follows.
\end{proof}

%%%%%%%%%%%%%%%%%%%%%%%%%%%%%%%%%%%%%%%%%%%%%%%%%%%%%%%%%%%%%%%%%%%%%%%%%%%%%
\subsection{Existence of elliptic fibrations}
\if0{Let $S$ be an elliptic ruled surface and denote a $\PP^1$-bundle structure by $f \colon S \rightarrow E$. 
Denote a general fiber of $f$ by $F_f$. 
Then there is a rank $2$ vector bundle $\mc E$ on $E$ such that $S\cong \mathbb{P}(\mathcal{E} )$ 
and we assume that $\mc E$ is normalized in the sense of \cite[V.2]{MR0463157}. 
Put $e=-\deg \mathcal{E}$ and take a minimal section $C_0$ with ${C_0}^2=-e$.
Then we have 
$$K_{S} \equiv -2C_0 -eF_f .$$
We can easily see that $-K_S$ is nef if and only if $e=0,-1$.
We can also deduce from \cite[V.2]{MR0463157} that 
\begin{equation}\label{eqn:possibility}
\mc{E}\cong
\begin{cases}
\mc{O}_E\oplus \mc{L} \mbox{ or } \mc{E}_{2,0} \quad &\mbox{in the case $e=0$}\\
\mc{E}_Q \quad &\mbox{in the case $e=-1$}
\end{cases}
\end{equation}
for some $\mc{L}\in \Pic^0 E$ and $Q\in E$.
\fi
In this subsection, we show the existence of an elliptic fibration on a given elliptic ruled surface $S=\PP(\mc{E})$.
If $S$ has an elliptic fibration, then $-K_S$ is nef, hence $\mc{E}$ is one of vector bundles appeared in \eqref{eqn:possibility}.

To achieve our purpose, first we study the structure of $\varphi^*\mc{E}$, where
$\varphi\colon F\to E$ is a finite isogeny of elliptic curves.
For a given such $\varphi$,  there is a factorization 
\[
\xymatrix{\varphi\colon  F\ar[r]^\psi  &E' \ar[r]^\phi &E,
}
\]
where $\psi$ is purely inseparable and $\phi$ is separable.
It is known that $\psi$ is a composition of Frobenius morphisms.
If $\ker\phi$ is non-trivial, it is a finite abelian group, which contains a cyclic group of a prime order.
Thus $\phi$ is decomposed into isogenies with a prime degree.
Hence it is essential to study the structure of $\varphi^*\mc{E}$ for an isogeny
$\varphi$ with a prime degree.

Suppose that $p>0$.
Let $E$ be an ordinary elliptic curve.
Then, $E$ contains a subgroup $\Z/p\Z$ in a unique way, and 
let us call 
$$
q_E\colon E\to E/(\Z/p\Z)
$$
the quotient morphism. 
Recall that we have
$$\widehat{q_E}=\Fr\colon  E/(\Z/p\Z)\cong E_p\to E.$$ 
To the contrary
if $E$ is supersingular, we have 
$$
\widehat{\Fr}=\Fr\colon E_p\to E.
$$

For an isogeny  $\varphi\colon F\to E$ of degree $p$, 
note that 
\begin{equation*}
\varphi=\begin{cases}
\Fr \text{ if $\varphi$ is purely inseparable},\\
q_F \text{ if $\varphi$ is separable. (In this case, $E$ is necessarily ordinary.)}
\end{cases}
\end{equation*}

We use the following Lemma to prove Lemma \ref{lem:pull_back}.
%%%
%%%
\begin{lem}[Corollaries 1.7 and 2.6 in \cite{MR318151}]\label{lem:push_forward}
Let $\varphi \colon F\to E$ be an isogeny of elliptic curves of degree $n$.
Then we have 
\begin{equation*}
\varphi_*\mc{O}_F=
\begin{cases}
\mc{E}_{p,0} & \text{ if $n=p$ and  $\widehat{\varphi}$ is purely inseparable}\\
\bigoplus_{\varphi^*\mc{L}\cong \mc{O}_F}\mc{L} & \text{ if $\widehat{\varphi}$
is separable.}
\end{cases}
\end{equation*}
%In particular, if $p>0$, $E$ is ordinary and $\varphi=\Fr$, then we have 
%$$\varphi_*\mc{O}_F=\bigoplus_{\mc{L}^{\otimes n}\cong \mc{O}_E}\mc{L}.$$ 
\end{lem}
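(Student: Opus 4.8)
The plan is to analyze $\mc{V}:=\varphi_*\mc{O}_F$ through its endomorphism algebra, which turns out to detect exactly the separable versus purely inseparable dichotomy in the statement. First I would record the data that hold for an arbitrary isogeny $\varphi$ of degree $n$. Since $\varphi$ is finite flat, $\mc{V}$ is locally free of rank $n$ on $E$. Applying Riemann--Roch on the elliptic curve $E$ (so $\chi(E,\mc{W})=\deg\mc{W}$) together with the Leray identity $\chi(F,\mc{O}_F)=\chi(E,\mc{V})$ and $\chi(F,\mc{O}_F)=1-g(F)=0$ gives $\deg\mc{V}=0$, while $h^0(E,\mc{V})=h^0(F,\mc{O}_F)=1$.

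Next I would bring in the finite group scheme $G:=\ker\varphi$ of order $n$, so that $\varphi\colon F\to E$ is a $G$-torsor. Pulling the torsor back along itself trivializes it, $F\times_E F\cong F\times_k G$, which yields the base-change identity $\varphi^*\mc{V}\cong\mc{O}_F\otimes_k\mathcal{O}(G)$, free of rank $n$ over $\mc{O}_F$. The key computation is then, via the adjunction $\varphi^*\dashv\varphi_*$,
\[
\End_{\mc{O}_E}(\mc{V})\cong\Hom_F(\varphi^*\mc{V},\mc{O}_F)\cong\Hom_F(\mc{O}_F\otimes_k\mathcal{O}(G),\mc{O}_F)\cong\mathcal{O}(G)^\vee ,
\]
and I would check that composition of endomorphisms corresponds to the convolution product, so that as a $k$-algebra $\End_{\mc{O}_E}(\mc{V})$ is isomorphic to the coordinate ring $\mathcal{O}(G^\vee)$ of the Cartier dual $G^\vee=\ker\widehat{\varphi}$. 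Since a finite-dimensional commutative $k$-algebra is local precisely when its spectrum is connected, it follows that $\mc{V}$ is indecomposable if and only if $G^\vee=\ker\widehat{\varphi}$ is connected, i.e. if and only if $\widehat{\varphi}$ is purely inseparable.

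When $n=p$ and $\widehat{\varphi}$ is purely inseparable, $\ker\widehat{\varphi}$ is infinitesimal, so $\End_{\mc{O}_E}(\mc{V})$ is local and $\mc{V}$ is indecomposable of rank $p$ and degree $0$. Because $h^0(E,\mc{V})=1\neq0$, Atiyah's uniqueness statement recalled in \S\ref{subsec:atiyah}---that $\mc{E}_{p,0}$ is the only bundle in $\mc{M}(p,0)$ with nonzero $H^0$---forces $\mc{V}\cong\mc{E}_{p,0}$, as claimed. When instead $\widehat{\varphi}$ is separable, $\ker\widehat{\varphi}$ is étale of order $n$, so over the algebraically closed field $k$ we have $\mathcal{O}(G^\vee)\cong k^n$. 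Its $n$ orthogonal idempotents decompose $\mc{V}$ into $n$ indecomposable summands $\mc{M}_1,\dots,\mc{M}_n$ with $\Hom_E(\mc{M}_i,\mc{M}_j)=0$ for $i\neq j$; as their ranks are positive and sum to $n$, each $\mc{M}_i$ is a line bundle. The vanishing of the off-diagonal $\Hom$'s together with $\deg\mc{V}=0$ forces every $\mc{M}_i$ to have degree $0$ and the $\mc{M}_i$ to be pairwise distinct (on an elliptic curve $\Hom(\mc{M}_i,\mc{M}_j)\neq0$ whenever $\deg\mc{M}_i\le\deg\mc{M}_j$ with equality only for $\mc{M}_i\cong\mc{M}_j$). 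Finally, adjunction gives $\Hom_E(\mc{M}_i,\mc{V})\cong\Hom_F(\varphi^*\mc{M}_i,\mc{O}_F)\cong H^0(F,(\varphi^*\mc{M}_i)^\vee)$, which is nonzero exactly when $\varphi^*\mc{M}_i\cong\mc{O}_F$; hence $\{\mc{M}_1,\dots,\mc{M}_n\}\subseteq\{\mc{L}\in\Pic^0E:\varphi^*\mc{L}\cong\mc{O}_F\}=\ker\widehat{\varphi}(k)$, and since both sets have exactly $n$ elements they coincide, giving $\mc{V}\cong\bigoplus_{\varphi^*\mc{L}\cong\mc{O}_F}\mc{L}$.

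The main obstacle is the second paragraph: verifying that $\End_{\mc{O}_E}(\mc{V})$ carries not merely the vector-space structure of $\mathcal{O}(G)^\vee$ but the convolution (Cartier-dual) product, and thereby translating indecomposability of $\mc{V}$ into connectedness of $\ker\widehat{\varphi}$. Once this algebra identification is in place, both cases follow formally from Atiyah's classification and the numerology $\deg\mc{V}=0$, $h^0(E,\mc{V})=1$ established at the outset.
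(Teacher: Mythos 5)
Your argument is correct, but note that there is no internal proof in the paper to compare against: the lemma is imported verbatim from Oda \cite{MR318151} (Corollaries 1.7 and 2.6), where the two cases are established separately in the course of Oda's systematic study of pushforwards under isogenies on top of Atiyah's classification. Your torsor-theoretic route is genuinely different and buys a uniform explanation of why the dichotomy is governed by $\widehat{\varphi}$ rather than $\varphi$: indecomposability of $\varphi_*\mc{O}_F$ becomes literally the connectedness of $\ker\widehat{\varphi}=(\ker\varphi)^\vee$. This uniformity is not cosmetic; in the ``separable'' case of the statement $\varphi$ itself may be purely inseparable (e.g.\ $\varphi=\Fr$ with $E$ ordinary, whose dual is the separable quotient by $\Z/p\Z$ recalled before Lemma \ref{lem:push_forward}), so any proof relying on translations by $k$-points of $\ker\varphi$ acting on $F$ would miss exactly the cases the paper needs in Lemma \ref{lem:pull_back} and \S\ref{subsec:O+L}, whereas your fppf identity $F\times_EF\cong F\times_kG$ sidesteps this. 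The one step you flag --- that composition in $\End_E(\varphi_*\mc{O}_F)$ corresponds to convolution on $\mc{O}(G)^\vee$ --- is a real gap as written, but it closes cleanly if you reverse the logic: rather than checking that the adjunction bijection is multiplicative, use the $G$-action on $F$ over $E$ to make $\varphi_*\mc{O}_F$ an $\mc{O}(G)$-comodule, hence obtain an algebra homomorphism $\mc{O}(G)^\vee\to\End_E(\varphi_*\mc{O}_F)$; after the faithfully flat pullback $\varphi^*$ this becomes the regular representation of $\mc{O}(G)^\vee$ on $\mc{O}(G)$, so it is injective, and your adjunction count $\dim_k\End_E(\varphi_*\mc{O}_F)=n$ forces it to be an isomorphism, identifying $\End_E(\varphi_*\mc{O}_F)\cong\mc{O}(G^\vee)$ as $k$-algebras by Cartier duality. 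Granting this, the rest checks out: in the connected case your invariants ($\mc{V}$ indecomposable of rank $p$, degree $0$, $h^0=1$) pin down $\mc{E}_{p,0}$ by the uniqueness recalled in \S\ref{subsec:atiyah}, and in the \'etale case the idempotent decomposition into $n$ line bundles, the vanishing of off-diagonal Homs (forced by the dimension count), the resulting degrees $0$ and pairwise non-isomorphy, and the identification with $\{\mc{L}\in\Pic^0E\mid\varphi^*\mc{L}\cong\mc{O}_F\}=\ker\widehat{\varphi}(k)$ via $\Hom_E(\mc{M}_i,\varphi_*\mc{O}_F)\cong H^0(F,(\varphi^*\mc{M}_i)^\vee)$ are all sound.
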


%%%
%%%

\begin{lem}\label{lem:pull_back}
Let $F$ and $E$ be elliptic curves. Take an indecomposable vector bundle $\mc{E}_{Q}\in \mc{M}_E(2,1)$ for some $Q\in E$.
\begin{enumerate}
\item
Suppose that $p>0$, and let $\varphi\colon F\to E$ be an isogeny  of
degree $p$ with $\widehat{\varphi}$ purely inseparable. Then we have
$$\varphi^*\mc{E}_{2,0}\cong \mc{O}_{F}\oplus \mc{O}_{F}.$$
\item
Suppose that $p=2$. Then we have 
$$
\Fr^* \mc{E}_{Q}\cong \mc{E}_{2,0}\otimes \mc{O}_{E_2}(Q).
$$ 
\item
Suppose that $p\ge 0$.  Let $\varphi\colon F\to E$ be a separable isogeny of degree $2$.
Then  
$$\varphi^*\mc{E}_{Q}\cong \mc{O}_F(Q_1)\oplus \mc{O}_{F}(Q_2),$$
where $Q_i\in F$ and $\mc{O}_F(Q_1-Q_2)$ is an order $2$ element of 
$\Pic ^0F$.
\end{enumerate}
\end{lem}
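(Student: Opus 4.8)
The common device is to pull back the two defining extensions and then read off the structure of the resulting rank-$2$ bundle from Atiyah's classification; the three parts diverge because the answers (trivial, indecomposable, decomposable) are genuinely different. For (i), pulling back $0\to\str_E\to\mc E_{2,0}\to\str_E\to 0$ along $\varphi$ yields an extension $0\to\str_F\to\varphi^*\mc E_{2,0}\to\str_F\to 0$ whose class is the image of the generator of $\Ext^1_E(\str_E,\str_E)=H^1(E,\str_E)$ under $\varphi^*\colon H^1(E,\str_E)\to H^1(F,\str_F)$, so it suffices to see that this map is zero. Identifying $H^1(\str)$ with the tangent space to $\Pic^0$ at the origin, $\varphi^*$ on $H^1(\str)$ is the differential of the dual isogeny $\widehat\varphi$ (since $\varphi^*\colon\Pic^0E\to\Pic^0F$ is $\widehat\varphi$); as $\widehat\varphi$ is purely inseparable by hypothesis, its differential vanishes, the extension splits, and $\varphi^*\mc E_{2,0}\cong\str_F\oplus\str_F$. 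In particular, when $\varphi=\Fr$ and $E$ is supersingular this is exactly the vanishing of $\Fr^*$ on $H^1(\str)$ recorded in the proof of Lemma \ref{lem:p0}.

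For (iii), $\varphi^*\mc E_Q$ has rank $2$ and degree $2$, and since $\gcd(2,2)\ne 1$ Atiyah's classification provides no stable bundle of this numerical type; hence $\varphi^*\mc E_Q$ is not stable and carries a sub-line bundle $\mc A$ with $\deg\mc A\ge 1$. As $\varphi$ is a separable degree-$2$ isogeny it is an étale $\Z/2$-cover; let $\sigma$ be the nontrivial deck transformation, translation by the order-$2$ point $t\in\ker\varphi$. Then $\sigma^*\mc A$, via the canonical identification $\sigma^*\varphi^*\mc E_Q\cong\varphi^*\mc E_Q$, is another sub-line bundle of the same degree. If $\sigma^*\mc A=\mc A$, this subbundle is $\sigma$-invariant and descends along $\varphi$ to $\mc B\subset\mc E_Q$ with $\deg\mc A=2\deg\mc B$; then $\deg\mc B\ge 1$ contradicts the stability of $\mc E_Q\in\mc M(2,1)$. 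Therefore $\mc A\ne\sigma^*\mc A$, the natural map $\mc A\oplus\sigma^*\mc A\to\varphi^*\mc E_Q$ is injective, and comparing degrees forces $\deg\mc A=1$ and this map to be an isomorphism. Writing $\mc A=\str_F(Q_1)$ and $\sigma^*\mc A=\str_F(Q_2)$ with $Q_2=\sigma(Q_1)=Q_1-t$, the line bundle $\str_F(Q_1-Q_2)$ corresponds to $t$ under $\Pic^0F\cong F$ and so has order $2$.

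For (ii), write $Q'\in E_2$ for the unique point over $Q$, so that $\Fr^*\str_E(Q)=\str_{E_2}(2Q')$ ($\Fr$ being purely inseparable of degree $2$), and set $\mc F:=\Fr^*\mc E_Q\otimes\str_{E_2}(-Q')$; this has rank $2$, degree $0$ and $\det\mc F\cong\str_{E_2}$, and the assertion is equivalent to $\mc F\cong\mc E_{2,0}$. By Atiyah this reduces to the single statement $h^0(E_2,\mc F)=1$: a rank-$2$ degree-$0$ bundle with trivial determinant and $h^0=1$ cannot be a direct sum $\mc L\oplus\mc L^{-1}$ (such a sum has $h^0\in\{0,2\}$), hence is indecomposable, hence is the unique bundle of $\mc M(2,0)$ with a nonzero section, namely $\mc E_{2,0}$. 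Twisting the pulled-back extension by $\str_{E_2}(-Q')$ gives $0\to\str_{E_2}(-Q')\to\mc F\to\str_{E_2}(Q')\to 0$, so $0\le h^0(\mc F)\le h^0(\str_{E_2}(Q'))=1$, and everything comes down to exhibiting one nonzero section.

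This non-vanishing $h^0(\mc F)\ne 0$ is the hard part. It amounts to lifting the unique section $s$ of $\str_{E_2}(Q')$ through the surjection $\mc F\to\str_{E_2}(Q')$, and the obstruction is the cup product $\Fr^*\xi\cup s\in H^1(E_2,\str_{E_2}(-Q'))$, where $\xi\in H^1(E,\str_E(-Q))$ is the class of $\mc E_Q$. A cohomology count (from $0\to\str_{E_2}(-2Q')\xrightarrow{s}\str_{E_2}(-Q')\to\str_{Q'}\to 0$) shows $\cup s\colon H^1(\str_{E_2}(-2Q'))\to H^1(\str_{E_2}(-Q'))$ is surjective with one-dimensional kernel, so the lift exists precisely when $\Fr^*\xi$ lands in this distinguished line; this is a genuine condition that I expect to verify by the explicit-trivialization method of Lemma \ref{lem:p0}, writing the extension cocycle of $\Fr^*\mc E_Q$ as the $p$-th (here square) power of that of $\mc E_Q$ and checking the relevant coboundary directly. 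Alternatively, Grothendieck duality for the finite morphism $\Fr$ (whose relative dualizing sheaf is trivial) rewrites $h^0(\mc F)=\dim\Hom_E(\Fr_*\str_{E_2}(Q'),\mc E_Q)$, and one may instead compute $\Fr_*\str_{E_2}(Q')$ as an elementary modification of $\Fr_*\str_{E_2}$ — the latter being $\mc E_{2,0}$ or $\str_E\oplus\mc N$ according as $E$ is supersingular or ordinary, by Lemma \ref{lem:push_forward} — and produce the required map into $\mc E_Q$; the ordinary case, where $\det\Fr_*\str_{E_2}(Q')\ne\str_E(Q)$, is the one demanding the most care.
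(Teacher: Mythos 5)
Parts (i) and (iii) of your proposal are correct, but both take genuinely different routes from the paper, whose uniform tool is the adjunction map $\adj\colon\str_E\to\varphi_*\str_F$ together with Oda's computation of $\varphi_*\str_F$ (Lemma \ref{lem:push_forward}). For (i) the paper deduces from the long exact sequence of $0\to\str_E\to\varphi_*\str_F\to\mc E_{p-1,0}\to 0$ that $H^1(\adj)$, i.e.\ $\varphi^*$ on $\Ext^1_E(\str_E,\str_E)$, vanishes; your identification of $\varphi^*$ on $H^1(\str)$ with the differential of $\widehat\varphi$, which vanishes by pure inseparability, yields the same splitting without Oda's theorem and treats the ordinary and supersingular cases uniformly (the paper notes only the supersingular case as immediate). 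For (iii) the paper first shows the pulled-back sequence \eqref{eqn:pullback_Q} is nonsplit via $h^0(\Cok(\adj\otimes\str_E(-Q)))=0$, then plays the determinant identity $\str_F(Q_1-Q_2)\cong\mc L^{\otimes 2}$ against $\ord\str_F(Q_1-Q_2)=2$; your argument instead conjugates a destabilizing line subbundle by the deck transformation $\sigma$ and descends in the invariant case. Your route is, if anything, more robust: since $\Pic^0F(k)$ is divisible, every element is a square, so the naked determinant identity yields no contradiction — what makes it bite is invariance of the decomposition data under the Galois action, which your proof uses explicitly and the paper leaves tacit.

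Part (ii), by contrast, has a genuine gap, which you acknowledge: the central nonvanishing $h^0(\mc F)\ne 0$ for $\mc F=\Fr^*\mc E_Q\otimes\str_{E_2}(-Q')$ is only ``expected to be verified,'' with two strategies sketched and neither executed. There is also a flaw upstream of it: the dichotomy ``$h^0\in\{0,2\}$ for a decomposable bundle with trivial determinant'' silently assumes degree-$0$ summands, and $\str_{E_2}(R)\oplus\str_{E_2}(-R)$ has trivial determinant and $h^0=1$; so even granted $h^0(\mc F)=1$, the conclusion $\mc F\cong\mc E_{2,0}$ requires excluding unstable decompositions, i.e.\ semistability of $\Fr^*\mc E_Q$, which you never establish. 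The paper sidesteps your section-lifting problem entirely: by Lemma \ref{lem:push_forward}, $\Cok\adj$ for $\Fr$ is a nontrivial degree-$0$ line bundle ($E$ ordinary) or $\str_E$ ($E$ supersingular), whence $h^0(\Cok(\adj\otimes\str_E(-Q)))=0$, the map $H^1(\adj\otimes\str_E(-Q))$ is injective, and \eqref{eqn:pullback_Q} remains nonsplit after pullback; decomposability is then killed by an $h^0$ count. If you instead complete your plan, you should import exactly this nonsplitness statement: it eliminates a degree-$0$ summand, torsion-freeness of the quotient in \eqref{eqn:pullback_Q} eliminates two equal degree-$1$ summands, and the remaining case $\str_{E_2}(R_1)\oplus\str_{E_2}(R_2)$ with $R_1\ne R_2$ still needs an argument — e.g.\ such a summand is preserved by the canonical connection on a Frobenius pullback (its second fundamental form lies in $\Hom(\str_{E_2}(R_1),\str_{E_2}(R_2))=0$), hence descends along $\Fr$ and would have even degree. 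One credit: your $h^0$ route, if finished, would pin down the twist $\str_{E_2}(Q)$ exactly, whereas the determinant argument determines $\Fr^*\mc E_Q$ only up to a $2$-torsion twist — immaterial for the paper's use after projectivization, but worth noting.
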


\begin{proof}
Let $\varphi\colon F\to E$ be an isogeny with prime $n:=\deg \varphi$.
Note that  Lemma \ref{lem:push_forward} implies 
\begin{equation*}
\Cok \adj= 
\begin{cases}
\mc{E}_{p-1,0} & \text{ if $n=p$ and $\widehat{\varphi}$ is purely inseparable, }\\
\bigoplus_{\ord \mc{L}=n}\mc{L} & \text{ if $\widehat{\varphi}$ is separable,}
\end{cases}
\end{equation*}
where 
$$
\adj \colon \mc{O}_E\to \varphi_*\varphi^*\mc{O}_E=\varphi_*\mc{O}_F
$$
is the adjunction morphism. 

(i) By the assumption, the connection morphism 
$$
H^0(\Cok \adj)\to H^1(\mc{O}_E)
$$ 
is isomorphic, and hence the morphism
$$
\varphi^*=H^1(\adj)\colon \Ext_E^1(\mc{O}_E, \mc{O}_E)\cong H^1(\mc{O}_E) 
\to \Ext^1_{F}(\varphi ^*\mc{O}_E, \varphi^*\mc{O}_E)\cong H^1(\varphi_*\mc{O}_F)
$$ 
is zero.  (Note that  if $E$ is 
supersingular,  this fact immediately follows from the definition.)
Hence the short exact sequence obtained by $\varphi^*$ of the sequence   
\eqref{eqn:Er0} for $r=1$ splits. Then the result follows.

(ii), (iii) 
In both cases, we have 
$$
h^0(\Cok (\adj\otimes \mc{O}_E(-Q)))=0, 
$$
and hence the morphism 
$$H^1(\adj\otimes \mc{O}_E(-Q))\colon
H^1(\mc{O}_E(-Q))\to  H^1(\varphi_*\varphi^*\mc{O}_E\otimes \mc{O}_E(-Q))=H^1(\varphi^*\mc{O}_E(-Q))
$$
is injective.
Since it coincides with the morphism
$$
\varphi^*\colon \Ext^1_E(\mc{O}_E(Q),\mc{O}_E)\to \Ext^1_F(\varphi^*\mc{O}_E(Q),\varphi^*\mc{O}_E),
$$
the short exact sequence 
\begin{equation}\label{eqn:pullback_Q}
0\to \mc{O}_F\to \varphi^*\mc{E}_{Q}\to \varphi^* \mc{O}_E(Q)\to 0
\end{equation}
obtained by $\varphi^*$ of the sequence \eqref{eqn:Er1}
does not split. 

Assume that $p=2$, $\varphi=\Fr$ and $\Fr^*\mc{E}_{Q}$ is decomposable. Since $\Fr^*Q=2Q$, we have 
$$
\Fr^*\mc{E}_{Q}=\mc{O}_{E_2}(Q)\oplus \mc{O}_{E_2}(Q).
$$ 
Then it follows from \eqref{eqn:pullback_Q} that there is an exact sequence
$$
0\to \mc{O}_{E_2}(-Q)\to \mc{O}_{E_2}\oplus \mc{O}_{E_2}\to \mc{O}_{E_2}(Q)\to 0.
$$
But this is absurd by $h^0( \mc{O}_{E_2}(Q))=1$. Hence, $\Fr^*\mc{E}_{Q}$ is indecomposable of degree $2$ 
with $\det \Fr^*\mc{E}_{Q}\cong \mc{O}_{E_2}(2Q)$.
This completes the proof of (ii).

 Next, let us give the proof of (iii). Set $\varphi^*Q=Q_1+Q_2$ for distinct points 
 $Q_1$ and $Q_2$ on $F$. Then we see that $\det \varphi^*\mc{E}_{Q}\cong \mc{O}_F(Q_1+Q_2)$ by  
 \eqref{eqn:pullback_Q}.
Note that  $\mc{O}_F(Q_1-Q_2)$ is an order $2$ element of $\Pic ^0F$, since $\varphi$ is a separable isogeny 
 of degree $2$. 
Assume that $\varphi^*\mc{E}_{Q}$ is indecomposable. Then there is an element $\mc{L}\in \Pic ^0F$ such that
$\varphi^*\mc{E}_{Q}\otimes \mc{O}_{F}(-Q_2)\cong \mc{E}_{2,0}\otimes \mc{L}$, and thus 
$$
\mc{O}_F(Q_1-Q_2)\cong \det(\mc{E}_{2,0}\otimes \mc{L})\cong \mc{L}^{\otimes 2}.
$$
This is absurd by the equality $\ord \mc{O}_F(Q_1-Q_2)=2$. Therefore, we obtain the conclusion.
\end{proof}

%%%
%%%

\begin{lem}\label{lem:pull_back2}
Let $E$ be an elliptic curve, and take
a line bundle $\mc{L}\in \Pic^0E$ with $m:=\ord \mc{L}<\infty$.
Then there is an isogeny $\varphi\colon F\to E$  of
degree $m$  such that $\varphi^*\mc{L}\cong \mc{O}_F$.
\end{lem}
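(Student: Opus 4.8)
The plan is to produce $\varphi$ by dualizing a quotient isogeny on $\Pic^0 E$. I will use the standard identification $\Pic^0 E\cong \widehat{E}$ of the degree-$0$ Picard group with the dual elliptic curve, together with the fact that for any isogeny $\varphi\colon F\to E$ the pullback map $\varphi^*\colon \Pic^0 E\to \Pic^0 F$ is identified with the dual isogeny $\widehat{\varphi}\colon \widehat{E}\to \widehat{F}$. Under this dictionary, asking for $\varphi^*\mc{L}\cong \mc{O}_F$ is the same as asking that $\mc{L}$ lie in $\ker\widehat{\varphi}$, so it suffices to realize $\mc{L}$ inside the kernel of a degree-$m$ dual isogeny.

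Concretely, first I would form the cyclic subgroup $G:=\langle \mc{L}\rangle\subset \Pic^0 E=\widehat{E}$. Since $\ord\mc{L}=m<\infty$ and $k$ is algebraically closed, $G$ consists of $m$ distinct $k$-points and is an \'etale subgroup scheme of order $m$. Let $\rho\colon \widehat{E}\to \widehat{E}/G=:B$ be the quotient isogeny, which has degree $m$ and kernel exactly $G$.

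Next I would dualize: set $F:=\widehat{B}$ and $\varphi:=\widehat{\rho}\colon \widehat{B}\to \widehat{\widehat{E}}=E$, using biduality $\widehat{\widehat{E}}=E$. Then $\deg\varphi=\deg\rho=m$, and biduality for isogenies gives $\widehat{\varphi}=\rho$. Hence the pullback $\varphi^*\colon \Pic^0 E=\widehat{E}\to \Pic^0 F=\widehat{F}=B$ is precisely $\rho$, so $\varphi^*\mc{L}=\rho(\mc{L})=0$ because $\mc{L}\in G=\ker\rho$; that is, $\varphi^*\mc{L}\cong \mc{O}_F$, as desired.

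Every step is routine once the duality dictionary is in place, so I do not expect a deep obstacle; the point that requires care is keeping the argument uniform in the characteristic. When $p\mid m$ (which forces $E$ to be ordinary for such an $\mc{L}$ to exist) the subgroup $G$ is still \'etale of order $m$, so $\rho$ is a genuine degree-$m$ isogeny and $\widehat{\rho}$ is an isogeny of the same degree even though it may now be inseparable; thus the argument goes through with no change. (When $p\nmid m$ one may alternatively take $\varphi\colon F\to E$ to be the \'etale cyclic cover of degree $m$ cut out by the torsion bundle, $F=\Spec_E\bigoplus_{i=0}^{m-1}\mc{L}^{\otimes i}$, which manifestly satisfies $\varphi^*\mc{L}\cong \mc{O}_F$; but the dualization argument has the advantage of avoiding any restriction on $p$.)
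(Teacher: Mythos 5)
Your proposal is correct and is essentially the paper's own proof: the authors likewise define $\varphi$ as the dual of the quotient isogeny $E\cong \Pic^0E \to \Pic^0E/\Span{\mc{L}}$, and your argument simply spells out the biduality and $\varphi^*=\widehat{\varphi}$ bookkeeping (plus the étaleness of $\Span{\mc{L}}$ when $p\mid m$) that the paper leaves implicit.
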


\begin{proof}
Define  $\varphi$ to be the dual morphism of the quotient morphism
$$
E\cong \Pic^0E \to F:=\Pic^0E/\Span{\mc{L}}.
$$
Then, $\varphi$ satisfies the desired property.
\end{proof}

\begin{lem}\label{lem:semi_ample}
Let $\varphi\colon F\to E$ be a finite morphism of elliptic curves, and $\mc{E}$ be a rank $2$ normalized 
vector bundle on $E$ with $e=-1$ or $0$. Suppose that  $S':=\PP(\varphi^*\mc{E})$ has an 
elliptic fibration $\pi'$. Then we have the following.
\begin{enumerate}
\item
We have $q^*\omega_S\cong \omega_{S'}$, where $q\colon S'\cong F\times_E S\to S:=\PP(\mc{E})$ is the second projection.
\item
The elliptic ruled surface $S$ also has an elliptic fibration $\pi\colon S\to \PP^1$ fitting into 
the following commutative diagram:
\begin{equation}\label{eqn:quotient_construction}
\xymatrix{ 
F \ar[d]_\varphi  \ar@{}[dr]|{\square} &  S'\ar[l]_{f'} \ar[r]^{\pi'} \ar[d]^q  & \PP ^1 \ar[d]^\psi   \\
E      &  S      \ar[l]^{f}       \ar[r]_\pi                            &  \PP ^1
}
\end{equation}
Here the left square is a fiber product diagram, and $\psi\circ \pi'$ is the 
Stein factorization of $\pi\circ q$. 
\end{enumerate}
\end{lem}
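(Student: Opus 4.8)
The plan is to deduce (ii) from (i), the heart of the matter being that the elliptic fibration $\pi'$ on $S'$ is governed by the anticanonical class, which then descends along the finite morphism $q$.

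For (i) I would use that an elliptic curve has trivial canonical bundle. Since $f\colon S\to E$ is smooth and $\omega_E\cong\str_E$, we have $\omega_S\cong\omega_{S/E}\otimes f^*\omega_E\cong\omega_{S/E}$, and likewise $\omega_{S'}\cong\omega_{S'/F}$ because $\omega_F\cong\str_F$. As the left square is cartesian with $f$ smooth, the relative dualizing sheaf is compatible with the base change $\varphi$, that is $\omega_{S'/F}\cong q^*\omega_{S/E}$; combining the three isomorphisms gives $q^*\omega_S\cong\omega_{S'}$. (Concretely one may also argue from $\omega_{\PP(\mc F)/B}\cong\str(-2)\otimes\mathrm{pr}^*\det\mc F$ for rank $2$ bundles, together with $\varphi^*\det\mc E\cong\det\varphi^*\mc E$ and $\varphi^*\omega_E\cong\omega_F$.)

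For (ii), note first that $S'=\PP(\varphi^*\mc E)$ is again an elliptic ruled surface carrying the elliptic fibration $\pi'$. Applying the canonical bundle formula (Proposition \ref{prop:canonical}) on $S'$ and letting $N$ be the least common multiple of the multiplicities of the fibers of $\pi'$, the term $\sum a_iD_i$ (supported on fibers, with $m_iD_i\sim\pi'^*p_i$) becomes a pullback from $\PP^1$ after multiplying by $N$, so $\omega_{S'}^{\otimes -N}\cong\pi'^*\mc J$ for a line bundle $\mc J$ on $\PP^1$; since $\omega_{S'}^{-1}$ is nef and not numerically trivial (as $K_{S'}\cdot F_{f'}=-2$) and $\pi'$ is a nontrivial fibration, $\deg\mc J>0$. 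Hence, using (i), $q^*(\omega_S^{\otimes -N})\cong\omega_{S'}^{\otimes -N}\cong\pi'^*\mc J$ is semiample and trivial on the fibers of $\pi'$. To descend this, for a general fiber $F'$ of $\pi'$ put $G:=q(F')$, so $\omega_S^{-N}\cdot G$ is a positive multiple of $q^*(\omega_S^{-N})\cdot F'=\pi'^*\mc J\cdot F'=0$; since the curves $G$ sweep out $S$, a general $G$ moves in a covering family, hence is nef and $G^2\ge0$. Writing $\eta:=\omega_S^{\otimes -N}$, which is nef with $\eta^2=\frac{1}{\deg q}(q^*\eta)^2=0$ and $\eta\not\equiv0$, the Hodge index theorem forces $G^2=0$ and $G\equiv\lambda\,\eta$ with $\lambda>0$; in particular two general members satisfy $G\cdot G'=\lambda^2\eta^2=0$ and are disjoint, so the family $\{G\}$ defines a morphism $\pi\colon S\to B$ onto a smooth curve $B$ with the $G$ as fibers. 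The assignment sending a fiber $F'$ of $\pi'$ to the fiber $G=q(F')$ of $\pi$ gives a surjection $\psi\colon\PP^1\to B$, whence $B\cong\PP^1$ and $\pi\circ q=\psi\circ\pi'$; as $\pi'$ has connected fibers and $\psi$ is finite, $S'\xrightarrow{\pi'}\PP^1\xrightarrow{\psi}\PP^1$ is the Stein factorization of $\pi\circ q$, which is the diagram asserted. Finally $K_S\cdot G=-\frac1N\eta\cdot G=0$ and $G^2=0$, so adjunction gives $p_a(G)=1$, and Lemma \ref{lem:fiber_on_ruled} excludes the quasi-elliptic case, so $\pi$ is an elliptic fibration.

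The main obstacle is exactly this descent of the fibration from $S'$ to $S$: it is where the hypothesis that $S'$ (and not merely $S$) carries a fibration is used, since $\omega_S^{-1}$ is always nef in our setting but need not be semiample (compare case (i-3)), so nefness alone is insufficient. The conceptually clean way to phrase it is that semiampleness of a line bundle is inherited from a finite surjective cover; in carrying this out one must be careful to use a form valid for arbitrary finite $q$, since in positive characteristic $\varphi$, and hence $q$, may be inseparable, and the remaining verifications are formal.
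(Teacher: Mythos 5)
Your part (i) is fine, and in fact cleaner than the paper's: the paper splits into the \'etale case and the purely inseparable case and invokes Ekedahl's result for the latter, whereas base change of the relative dualizing sheaf along the smooth morphism $f$, together with $\omega_E\cong\str_E$ and $\omega_F\cong\str_F$, settles both cases at once. Your reduction $\omega_{S'}^{\otimes -N}\cong\pi'^*\mc{J}$ with $\deg\mc{J}>0$, and the Hodge-index computation giving $G^2=0$ and $G\equiv\lambda\eta$, are also correct.

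The gap is at the decisive step of part (ii): ``so the family $\{G\}$ defines a morphism $\pi\colon S\to B$ onto a smooth curve $B$ with the $G$ as fibers.'' A covering family of pairwise disjoint curves does not come with a morphism to a parameter curve for free; producing $\pi$ requires an actual mechanism, and this is precisely where all the work lies. The paper supplies it by descending \emph{sections}: via the projection formula and flat base change, $h^0(S',\omega_{S'}^{\otimes -m})=h^0(E,f_*\omega_S^{\otimes -m}\otimes\varphi_*\str_F)$, and Oda's computation of $\varphi_*\str_F$ (Lemma \ref{lem:push_forward}) then yields $h^0(S,\omega_S^{\otimes -m}\otimes f^*\mc{L})\ge 2$ for a suitable degree-zero twist $\mc{L}$ when $\widehat{\varphi}$ is separable (note the paper does \emph{not} claim $h^0(S,\omega_S^{\otimes -m})\ge 2$ in that case), after which the moving part $|M|$ is base point free and Stein factorization gives $\pi$. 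Your alternative fix --- descent of semiampleness along the finite surjective $q$ --- is legitimate and you allude to it, but you never carry it out; here it is elementary since $q$ is finite flat between smooth surfaces, so for $x\in S$ one picks a section of the base-point-free $q^*\omega_S^{\otimes -N}$ nonvanishing on the finite set $q^{-1}(x)$ and takes its norm, a section of $\omega_S^{\otimes -N\deg q}$ nonvanishing at $x$. Either way, a second genuine issue remains which your write-up never addresses: once $\pi$ exists, one must show its \emph{general scheme-theoretic fiber is reduced}, which is not automatic in characteristic $p$. When $\psi$ is purely inseparable --- exactly what happens for $\PP(\mc{E}_{2,0})$ and for $\PP(\mc{E}_Q)$ over a supersingular curve in $p=2$ --- one has $q^*\pi^*(\mathrm{pt})=pF'$ for \emph{every} point, which is a priori consistent with every fiber of $\pi$ being multiple; the paper excludes this with MacLane's theorem \cite[Theorem 2]{MR1969}. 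Your proposal gets reducedness only by fiat, because the fibers of the morphism you never constructed were declared to be the reduced curves $G$. So the strategy is sound and repairable (norm-map descent plus a MacLane-type reducedness argument, plus Lemma \ref{lem:fiber_on_ruled} to exclude the quasi-elliptic case as you do), but as written the proof has a hole precisely where the hypothesis is doing its work.
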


\begin{proof}
(i)
If $\varphi$ is \'{e}tale, the result is obvious. Thus we may assume that $\varphi$ is purely inseparable of degree
 $p$, that is $\varphi=\Fr$. In this case, $\Omega_{F/E}=\Omega_F=\mc{O}_F$, hence $\Omega_{S'/S}
\cong f'^*\Omega_{F/E}=\mc{O}_S'$. Then we obtain the result by \cite[Corollary 3.4]{MR927978}.

(ii) Take a sufficiently large integer $m$. Then we may assume that $h^0(S',\omega_{S'}^{\otimes -m})$ is sufficiently 
large.
Now, we have a series of equalities
\begin{align*}
h^0(S',\omega_{S'}^{\otimes -m})&=h^0(E,f_*q_*q^*\omega_{S}^{\otimes -m})=h^0(E,\varphi_*f'_*q^*\omega_{S}^{\otimes -m})\\
 &=h^0(E,\varphi_*\varphi^*f_*\omega_{S}^{\otimes -m})=h^0(E,f_*\omega_{S}^{\otimes -m}\otimes \varphi_*\mc{O}_F).
\end{align*}
 Here, the third equality comes from the flat base change theorem.

If $\widehat{\varphi}$ is separable, 
it follows from Lemma \ref{lem:push_forward} that there exists an invertible sheaf $\mc{L}\in\Pic^0 E$ such that 
$\varphi^*\mc{L}\cong \mc{O}_F$ and 
$$
h^0(S,\omega_{S}^{\otimes -m}\otimes f^*\mc{L}))=h^0(E,f_*\omega_{S}^{\otimes -m}\otimes \mc{L})\ge 2.
$$
Next, suppose that $\widehat{\varphi}$ is purely inseparable of degree $p$. Since 
$\varphi_*\mc{O}_F\cong \mc{E}_{p,0}$  by Lemma \ref{lem:push_forward} and 
$\mc{E}_{p,0}$ has a filtration into $\mc{O}_E$ by \eqref{eqn:Er0}, we have
$$
h^0(S,\omega_{S}^{\otimes -m})=h^0(E,f_*\omega_{S}^{\otimes -m})\ge 2.
$$

Since, in general,  $\varphi$ is a composition of separable morphisms and purely inseparable morphisms of
 degree $p$, we can find a divisor $H$ on $S$ satisfying $H^2=K_S\cdot H=0$ and $h^0(S,\mc{O}_S(H))\ge 2$.
Let us consider the moving part $|M|$ of the complete linear system 
$
|H|.
$
Note that every effective divisor on $S$ has a non-negative self-intersection, 
hence $M^2=0$ and $h^0(S,\mc{O}_S(M))\ge 2$. Thus we know that the linear system $|M|$
is base point free, and  it satisfies $M\cdot K_S=0$. 
Let us take the Stein factorization of the projective morphism defined by $|M|$, 
and then we obtain a morphism $\pi\colon S\to \PP^1$ with connected fibers.  
\cite[Theorem 2]{MR1969} forces the general fiber of $\pi$ is reduced.
Lemma \ref{lem:fiber_on_ruled} kills the possibility that $\pi$ is quasi-elliptic, hence $\pi$ is 
elliptic. 

Taking the Stein factorization of $\pi\circ q$, we obtain a finite morphism $\psi$ in the diagram.
\end{proof}

We apply Lemma \ref{lem:semi_ample} to show the following.

\begin{prop}\label{prop:semi_ample_e_-1}
Let $\mc{E}$ be a vector bundle of rank $2$ on an elliptic curve $E$. In the following cases, $\PP(\mc{E})$ has an elliptic fibration.
\begin{enumerate}
\item
$p>0$ and $\mc{E}=\mc{E}_{2,0}$. 
\item
$p\ge 0$ and $\mc{E}=\mc{O}_E\oplus \mc{L}$ for $\mc{L}\in \Pic ^0E$
with $\ord \mc{L}<\infty$. 
\item
$p\ge 0$ and $\mc{E}=\mc{E}_Q$. 
\end{enumerate}
\end{prop}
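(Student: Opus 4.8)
The plan is to reduce every case to Lemma~\ref{lem:semi_ample} by pulling back $\mc{E}$ along a carefully chosen finite isogeny $\varphi\colon F\to E$, selected so that the projective bundle $\PP(\varphi^*\mc{E})$ visibly carries an elliptic fibration; Lemma~\ref{lem:semi_ample} then descends that fibration to $S=\PP(\mc{E})$. In each case $\mc{E}$ is one of the normalized bundles listed in \eqref{eqn:possibility} with $e=0$ or $-1$, so the standing hypotheses of Lemma~\ref{lem:semi_ample} hold automatically, and the only work is to produce an elliptic fibration on $\PP(\varphi^*\mc{E})$.

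For case (i) I would take a degree-$p$ isogeny $\varphi\colon F\to E$ whose dual $\widehat\varphi$ is purely inseparable: this is the Frobenius $\Fr$ when $E$ is supersingular, and a separable quotient $q_F\colon F\to E$ (so that $F/(\Z/p\Z)\cong E$) when $E$ is ordinary, as supplied by the discussion preceding Lemma~\ref{lem:push_forward}. Lemma~\ref{lem:pull_back}(i) then yields $\varphi^*\mc{E}_{2,0}\cong\mc{O}_F\oplus\mc{O}_F$, so $\PP(\varphi^*\mc{E}_{2,0})\cong F\times\PP^1$ and the projection to $\PP^1$ is an elliptic fibration. For case (ii), Lemma~\ref{lem:pull_back2} gives an isogeny $\varphi\colon F\to E$ of degree $m=\ord\mc{L}$ with $\varphi^*\mc{L}\cong\mc{O}_F$; hence $\varphi^*(\mc{O}_E\oplus\mc{L})\cong\mc{O}_F\oplus\mc{O}_F$ and again $\PP(\varphi^*\mc{E})\cong F\times\PP^1$ has the evident elliptic fibration. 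In both cases Lemma~\ref{lem:semi_ample} completes the argument.

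For case (iii) I would treat $p=2$ separately from $p\ne 2$, since a separable degree-$2$ isogeny onto $E$ need not exist when $p=2$ (for instance if $E$ is supersingular). When $p\ne 2$ (including $p=0$), choose a separable degree-$2$ isogeny $\varphi\colon F\to E$; by Lemma~\ref{lem:pull_back}(iii) one gets $\varphi^*\mc{E}_Q\cong\mc{O}_F(Q_1)\oplus\mc{O}_F(Q_2)$ with $\mc{M}:=\mc{O}_F(Q_2-Q_1)$ of order $2$, whence $\PP(\varphi^*\mc{E}_Q)\cong\PP(\mc{O}_F\oplus\mc{M})$, and this carries an elliptic fibration by the already-established case (ii) applied to $(F,\mc{M})$. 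When $p=2$, I would instead use $\Fr\colon E_2\to E$: Lemma~\ref{lem:pull_back}(ii) gives $\Fr^*\mc{E}_Q\cong\mc{E}_{2,0}\otimes\mc{O}_{E_2}(Q)$, so $\PP(\Fr^*\mc{E}_Q)\cong\PP(\mc{E}_{2,0})$ over $E_2$, which has an elliptic fibration by case (i) since $p=2>0$. Either way, Lemma~\ref{lem:semi_ample} descends the fibration to $S$. Note this forces the order of proof (i), (ii), then (iii).

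All the bundle-theoretic inputs, namely the splitting or projective triviality of the pulled-back bundle, are packaged in Lemmas~\ref{lem:pull_back} and~\ref{lem:pull_back2}, so the only genuine subtlety is the case analysis in (iii): one must route $p=2$ through Frobenius and case (i) rather than through a separable $2$-isogeny and case (ii). The step where I would be most careful is verifying that the chosen $\varphi$ actually satisfies the hypotheses of Lemma~\ref{lem:pull_back}, in particular that $\widehat\varphi$ is purely inseparable in case (i) for both supersingular and ordinary $E$, and that a separable $2$-isogeny onto $E$ exists whenever $p\ne 2$.
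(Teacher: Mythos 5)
Your proposal is correct and follows essentially the same route as the paper's proof: in case (i) you pull back along the degree-$p$ isogeny with purely inseparable dual (the paper's $\varphi=\widehat{\Fr}$, which you correctly unpack as $q_F$ for ordinary $E$ and $\Fr$ for supersingular $E$) and apply Lemma \ref{lem:pull_back} (i), in case (ii) you use Lemma \ref{lem:pull_back2}, and in case (iii) you make exactly the paper's dichotomy, reducing $p=2$ to case (i) via Frobenius and Lemma \ref{lem:pull_back} (ii) and $p\ne 2$ to case (ii) via a separable $2$-isogeny and Lemma \ref{lem:pull_back} (iii), with Lemma \ref{lem:semi_ample} descending the fibration to $\PP(\mc{E})$ each time. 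No gaps; your flagged verifications (existence of the required isogenies and the purely inseparable dual in case (i)) are handled just as in the paper's discussion preceding Lemma \ref{lem:push_forward}.
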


\begin{proof}
(i)
Take an isogeny $\varphi\colon F\to E$ of degree $p$ such that $\widehat{\varphi}$
is purely inseparable, namely $\varphi:=\widehat{\Fr}$. Then we have an isomorphism $\PP(\varphi^*\mc{E}_{2,0})\cong \PP^1\times E$ by Lemma \ref{lem:pull_back} (i). Then, 
the result follows from Lemma \ref{lem:semi_ample}.  

(ii) Lemma \ref{lem:pull_back2} tells us that  $\varphi^*(\mc{O}_E\oplus \mc{L})\cong \mc{O}_F\oplus \mc{O}_F$ for a suitable isogeny $\varphi\colon F\to E$. Hence the result follows from  Lemma \ref{lem:semi_ample}.

(iii) Suppose that $p=2$. Then  Lemma \ref{lem:pull_back} (ii) assures the existence of isomorphism 
$\PP(\Fr^*\mc{E}_{Q})\cong \PP(\mc{E}_{2,0})$. Then, combining (i) with  Lemma \ref{lem:semi_ample}, we obtain the conclusion.

Suppose that $p\ne 2$. Then  Lemma \ref{lem:pull_back} (iii) implies that there is an isomorphism 
$\PP(\varphi^*\mc{E}_{Q})\cong \PP(\mc{O}_F\oplus \mc{L})$, where 
 $\varphi\colon F\to E$ is an isogeny of degree $2$, and $\mc{L}\in \Pic^0F$ with $\ord \mc{L}=2$. 
Combining (ii) with  Lemma \ref{lem:semi_ample}, we obtain the conclusion.  
\end{proof}

\begin{rem}\label{rem:maruyama}
Maruyama showed in \cite[Lemma 9]{MR280493} that $\PP(\mc{E}_Q)$ ($\mathbf{P}_1$ in his notation) has a base point free
 linear pencil whose generic member is an elliptic curve if $p\ne 2$. In addition to it, he also stated in 
 \cite[Remark 7]{MR280493} that a similar result is true in the case $p=2$ ``by reduction''. The authors do not 
 understand what it means.

Using this result, he showed which elliptic ruled surfaces have an elliptic fibration in \cite[Theorem 4]{MR280493}.
But the authors feel that the proof in the case $p=2$ is quite unsatisfactory.
\end{rem}
  
\if0
Note that if $p$ is an odd prime number in Proposition \ref{prop:semi_ample_e_-1} (ii), 
then 
$$
(\varphi^*\mc{E}_Q)(-\frac{p-1}{2}R)\cong \mc{E}_{F}(2,1)
$$ 
for an  isogeny $\varphi$ of degree $p$.
Thus this proof does not work.
\fi

%%%%%%%%%%%%%%%%%%%%%%%%%%%%%%%%%%%%%%%%%%%%%%%%%%%%
\subsection{Ramifications and singular fibers}
Let us consider the situation in Lemma \ref{lem:semi_ample} and the diagram \eqref{eqn:quotient_construction}. 
\if0 and then we have the 
following diagram: 
\begin{equation}\label{eqn:quotient_construction0}
\xymatrix{ 
F \ar[d]_\varphi\ar@{}[dr]|{\square}  &  S':=\PP(\varphi^*\mc{E})\ar[l]_{f'\qquad } \ar[r]^{\qquad \pi'} \ar[d]^q  & \PP ^1 \ar[d]^\psi   \\
E      &  S:=\PP(\mc{E})      \ar[l]^{f\quad }       \ar[r]_{\quad \pi}                            &  \PP ^1
}
\end{equation}
\fi
If an ordinary elliptic curve is isogeneous to another elliptic curve, it is also ordinary. Therefore,  we see that
all of $E$, $F$, general fibers of $\pi$ and $\pi'$, and reductions of multiple fibers of $\pi$ and $\pi'$ are either ordinary or supersingular elliptic curves simultaneously. 
In this subsection, we extract information on multiple fibers of $\pi$ from one of $\pi'$, 
and vice versa.

%%%
%%%
%%%

\begin{lem}\label{lem:mult_branch}
In the diagram \eqref{eqn:quotient_construction}, fix a point $Q\in \PP^1$ and $\psi^*Q=\sum e_iQ_i'$, where $Q_i'\ne Q_j'$ for $i\ne j$.
Denote the fiber of $\pi$ over $Q$  by $mD$, where $m$ is its multiplicity, and 
denote the fiber $m'_iD'_i$ of $\pi'$ over the point $Q_i'$ similarly.
\begin{enumerate}
\item
For each $i$, $m'_ie_i$ is divisible by $m$. Moreover we have
$$
\sum \frac{m_i'e_i}{m}\le \deg q.
$$
\item
If $m=1$ holds, then we have $m_i'=1$ for all $i$. Conversely if $e_i=m_i'=1$ holds for some $i$, then we have $m=1$.
\item
Suppose that $\psi$ is not ramified at $Q'_i$, that is $e_i=1$. Then 
$\pi$ has a multiple fiber over $Q$ if and only if $\pi'$ has a multiple fiber over $Q'_i$.  
\end{enumerate}
\end{lem}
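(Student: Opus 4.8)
The engine of the proof is a single divisor identity coming from the commutativity of \eqref{eqn:quotient_construction}. Since $\pi\circ q=\psi\circ\pi'$, pulling back the point $Q\in\PP^1$ gives $q^*\pi^*Q=\pi'^*\psi^*Q$. The left-hand side equals $q^*(mD)=m\,q^*D$, while the right-hand side equals $\pi'^*\bigl(\sum_i e_iQ_i'\bigr)=\sum_i e_im_i'D_i'$. The morphism $q$ is finite and flat, being the base change of the finite flat isogeny $\varphi$, so $q^*D$ is a well-defined effective divisor; its support is $q^{-1}(D)$, which by commutativity is exactly $\bigcup_i D_i'$ (each $D_i'$ is a curve with $q(D_i')\subseteq D$, hence dominates $D$, and nothing else maps into $D$). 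Writing $q^*D=\sum_i c_iD_i'$ with $c_i\ge 1$ and comparing coefficients of the distinct prime divisors $D_i'$, I obtain the key relation
\[
m\,c_i=e_i\,m_i'\qquad(\ast)
\]
valid for every $i$.

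From $(\ast)$ the divisibility in (i) is immediate: $m\mid e_im_i'$. For the inequality I push $q^*D=\sum_i c_iD_i'$ forward by $q$. Since $q$ is finite flat of degree $\deg q$, the projection formula gives $q_*q^*D=(\deg q)\,D$, while $q_*D_i'=d_i\,D$ with $d_i:=\deg\bigl(q|_{D_i'}\colon D_i'\to D\bigr)\ge 1$. Hence $\sum_i c_id_i=\deg q$, and as $d_i\ge 1$ this yields $\sum_i\frac{e_im_i'}{m}=\sum_i c_i\le\deg q$, proving (i). The easy halves of (ii) and (iii) also drop out of $(\ast)$: if $e_i=m_i'=1$ for some $i$ then $m\,c_i=1$ forces $m=1$; and when $e_i=1$, $(\ast)$ reads $m_i'=m\,c_i$, so $m>1$ at once gives $m_i'>1$.

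The remaining content — the forward implication of (ii), namely $m=1\Rightarrow m_i'=1$ for all $i$, which also supplies the outstanding direction of (iii) — amounts by $(\ast)$ to showing $c_i=e_i$, i.e. that $D_i'$ occurs in $q^*D$ with multiplicity exactly the ramification index of $\psi$ at $Q_i'$. Here I would reduce to the case where $\varphi$ has prime degree, since a general isogeny factors as a composite of a separable isogeny and of Frobenius morphisms (as recalled before Lemma \ref{lem:push_forward}), and $q$ decomposes accordingly. When $\varphi$ is separable it is \'etale, hence so is $q$; then $q^*D$ is the reduced preimage of the reduced divisor $D$ (recall $mD$ is of type ${}_m\mathrm{I}_0$ with $D$ smooth by Lemma \ref{lem:fiber_on_ruled}), so every $c_i=1$ and $(\ast)$ gives $m_i'=1$. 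The main obstacle is the purely inseparable case $\varphi=\Fr$, where $q$ is ramified along $D_i'$ and $c_i$ may exceed $1$. In that case there is a single point $Q_1'$ over $Q$ and $\psi$ is itself purely inseparable, so the multiplicity recorded by $c_1$ ought to be matched by $e_1=\deg\psi$; concretely I would compute $d_1=\deg(D_1'\to D)$ by analysing the reduced preimage $(F\times_E D)_{\mathrm{red}}$ — using that the restriction of a purely inseparable morphism is purely inseparable and that $D\to E$ is generically separable — so that the constraint $c_1d_1=\deg q$ pins down $c_1$. Checking this inseparable bookkeeping, rather than the formal identity $(\ast)$, is where the real work lies.
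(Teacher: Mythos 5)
Your identity $(\ast)$, the pushforward inequality in (i), and the easy halves of (ii) and (iii) are precisely the paper's own argument: its proof consists of the displayed identity $q^*(mD)=q^*\pi^*Q=\pi'^*\psi^*Q=\sum_i m_i'e_iD_i'$ (equation \eqref{eqn:mDQ}), and your flatness/pushforward bookkeeping is a correct fleshing-out of the ``direct consequence'' claimed there. The genuine gap is the forward implication of (ii), $m=1\Rightarrow m_i'=1$ for all $i$ (which also carries one direction of (iii)). You reduce it to the exact equality $c_i=e_i$ and then only sketch the proof: the purely inseparable case is left at the level of ``ought to be matched'' and ``I would compute'', and even the reduction to prime degree is heavier than you acknowledge, since chaining the statement through a factorization of $\varphi$ requires elliptic fibrations, Stein factorizations, and the $(\ast)$-data on every intermediate surface of the tower. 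As written, (ii) and (iii) are not proved.

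The missing idea is that no fiber-by-fiber computation of the $c_i$ is needed, because you already hold the decisive inequality: your (i) gives $\sum_i e_im_i'\le\deg q$ when $m=1$, and for \emph{any} finite morphism $\psi$ of smooth curves, separable or not, the divisor $\psi^*Q$ has degree $\deg\psi$, i.e.\ $\sum_i e_i=\deg\psi$. Since $\deg q=\deg\psi$ in the diagram \eqref{eqn:quotient_construction}, the case $m=1$ yields $\sum_i e_im_i'\le\deg q=\deg\psi=\sum_i e_i$, and $m_i'\ge 1$ forces $m_i'=1$ for all $i$ --- this one-line count is the paper's entire proof of (ii), and it handles the Frobenius case with no extra work. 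The only geometric input beyond your (i) is the equality $\deg q=\deg\psi$, equivalently that a \emph{general} fiber of $\pi'$ maps birationally onto the corresponding fiber of $\pi$; the paper asserts this without comment, and it can be checked in each construction by comparing degrees over $E$ (from $f\circ q|_{D'}=\varphi\circ f'|_{D'}$ one gets $\deg(f|_{D})\cdot\deg(q|_{D'})=\deg\varphi\cdot\deg(f'|_{D'})$, with the left-hand factors computed as intersection numbers with fibers of $f$). Note that this single statement at a general fiber is exactly what your unfinished computation of $d_1$ was trying to establish at the special (wildly ramified) fiber, where it is hardest; replacing your pointwise claim $c_i=e_i$ by the global count above is what closes the argument.
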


\begin{proof}
(i) The assertions are a direct consequence of the equalities
\begin{equation}\label{eqn:mDQ}
q^*(mD)=q^*\pi^*Q=\pi'^*\psi^*Q=\sum m_i'e_iD_i'.
\end{equation}

(ii) First assume that $m=1$. Then the inequality 
$$
\sum m_i'e_i\le\deg q=\deg \psi=\sum e_i
$$ 
forces that $m_i'=1$ for all $i$.  
The second assertion is a direct consequence of (i).

(iii) The assertion is a direct consequence of (ii).
\end{proof}

%%%
%%%
%%%
\begin{lem}\label{lem:deg2}
In the notation in Lemma \ref{lem:mult_branch}, we assume furthermore that $\deg q=\deg \psi=2$. 
Let us consider the morphism $q|_{D_i'}\colon D_i'\to D$.
\begin{enumerate}
\item
Suppose that $e_1=1$, equivalently $e_2=1$ holds. 
Then we have $m_1'=m_2'=m$, and multiple fibers $m D$, $m D_1'$ and $m D_2'$ 
are either tame or wild simultaneously.
\item
Suppose that $p=m=2$ and $e_1=2$,  $m_1'=1$, and $q|_{D_1'}$ is separable. Then $2D$ is a wild fiber.   
\item
Suppose that $p=m=2$ and $e_1=2$,  $m_1'=2$, and  $q|_{D_1'}$ is isomorphic. Then $2D$ is a wild fiber.   
\end{enumerate}
\end{lem}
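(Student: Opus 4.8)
The plan is to treat all three parts through a single computation of the order $\nu:=\ord\bigl(\str_S(D)|_D\bigr)$ of the normal bundle $\mc{N}_{D/S}=\str_S(D)|_D$, which lies in $\Pic^0 D$ and is torsion. Recall from \eqref{eqn:mpnu} and the remark following it that $m=p^\alpha\nu$ and that $mD$ is tame exactly when $\nu=m$; in particular, in the cases (ii) and (iii), where $m=p=2$ forces $\nu\in\{1,2\}$, showing that $2D$ is wild amounts to proving $\nu=1$, i.e. that $\mc{N}_{D/S}$ is trivial. The engine of the whole argument is the identity obtained by restricting $q^*\str_S(D)\cong\str_{S'}(q^*D)$ to the component $D_i'$: since $q(D_i')=D$, this yields
\begin{equation*}
(q|_{D_i'})^*\bigl(\str_S(D)|_D\bigr)\cong\str_{S'}(q^*D)\big|_{D_i'},
\end{equation*}
where the divisor $q^*D$ is read off from \eqref{eqn:mDQ}. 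I would compute both sides under the hypotheses of each part and then exploit the behaviour of $(q|_{D_i'})^*$ on $\Pic^0$.

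For part (i), Lemma \ref{lem:mult_branch}(i) together with $e_1=e_2=1$ forces $m_1'=m_2'=m$, whence \eqref{eqn:mDQ} gives $q^*D=D_1'+D_2'$. Pushing forward, $q_*(D_1'+D_2')=2D$ shows each $q|_{D_i'}\colon D_i'\to D$ has degree $1$, hence is an isomorphism. Because $D_1'$ and $D_2'$ lie over distinct points of $\PP^1$ they are disjoint, so $\str_{S'}(q^*D)|_{D_1'}=\str_{S'}(D_1')|_{D_1'}=\mc{N}_{D_1'/S'}$, and the identity above reads $(q|_{D_1'})^*\mc{N}_{D/S}\cong\mc{N}_{D_1'/S'}$. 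As $q|_{D_1'}$ is an isomorphism it preserves orders in $\Pic^0$, giving $\nu_1'=\nu$, and likewise $\nu_2'=\nu$. Since $m_1'=m_2'=m$, the criterion $\nu=m$ holds for $mD$ iff it holds for $mD_1'$ and $mD_2'$, which is the assertion.

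For parts (ii) and (iii) there is a single point $Q_1'$ with $e_1=2$; from \eqref{eqn:mDQ} and $q^*(mD)=m\,q^*D$ we obtain $q^*D=(m_1'e_1/m)D_1'$, i.e. $q^*D=D_1'$ in (ii) (where $m_1'=1$) and $q^*D=2D_1'$ in (iii) (where $m_1'=2$). In (ii) the component $D_1'=\pi'^*Q_1'$ is a reduced fibre, so its normal bundle $\str_{S'}(D_1')|_{D_1'}$ is trivial, and the identity gives $(q|_{D_1'})^*\mc{N}_{D/S}\cong\str_{D_1'}$. In (iii) the identity reads $(q|_{D_1'})^*\mc{N}_{D/S}\cong(\mc{N}_{D_1'/S'})^{\otimes 2}$; but $m_1'=p=2$ forces $\nu_1'\in\{1,2\}$, so $\mc{N}_{D_1'/S'}$ has order dividing $2$ and its square is trivial. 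Thus in both cases $(q|_{D_1'})^*\mc{N}_{D/S}$ is trivial, and it remains to deduce that $\mc{N}_{D/S}$ itself is trivial. In (iii) this is immediate, since $q|_{D_1'}$ is an isomorphism and hence injective on $\Pic^0$.

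The delicate point, and the step I expect to be the main obstacle, is the corresponding injectivity in (ii), where $q|_{D_1'}\colon D_1'\to D$ is only a separable degree-$2$ isogeny in characteristic $2$. One might fear a kernel of order $2$ obstructing injectivity, but the map $(q|_{D_1'})^*$ on $\Pic^0$ is the dual isogeny $\widehat{q|_{D_1'}}$, whose kernel is the Cartier dual of the étale group $\Z/2\Z=\ker(q|_{D_1'})$, namely $\mu_2$. In characteristic $2$ the group scheme $\mu_2$ is infinitesimal, so $\mu_2(k)$ is trivial and $\widehat{q|_{D_1'}}$ is purely inseparable, hence bijective on $k$-points; therefore $(q|_{D_1'})^*$ is injective on $\Pic^0 D$. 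Consequently $\mc{N}_{D/S}$ is trivial, so $\nu=1\neq 2=m$ and $2D$ is wild, completing (ii). I would also note that the very existence of a separable $2$-isogeny already forces $D$ and $D_1'$ to be ordinary, which is consistent with, and indeed underlies, the $\mu_2$-computation above.
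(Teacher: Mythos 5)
Your proof is correct and follows essentially the same route as the paper's: both reduce everything to computing $\nu=\ord\bigl(\mc{O}_D(D)|_D\bigr)$ by restricting $\mc{O}_{S'}(q^*D)$ (read off from \eqref{eqn:mDQ}) to $D_i'$, and in case (ii) both rest on the same key point that the dual of the separable degree-$2$ isogeny $q|_{D_1'}$ has kernel $\mu_2$, which is infinitesimal in characteristic $2$, so pullback is injective on closed points of $\Pic^0 D$ and $\mc{O}_D(D)$ must itself be trivial. The only cosmetic deviation is in (iii), where you deduce triviality of $\mc{O}_{D_1'}(2D_1')$ from $\nu_1'\mid m_1'=2$ instead of from $2D_1'=\pi'^*Q_1'$ being a full fibre; both are immediate and give the same conclusion.
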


\begin{proof}
Note that $q|_{D_i'}^*\mc{O}_D(D)\cong \mc{O}_{D_i'}(q^*D)$.

(i) Lemma \ref{lem:mult_branch} (i) tells us that $m_1'=m_2'=m$.
 We can see that $q|_{D_i'}$ is isomorphic 
and $q|_{D_i'}^*\mc{O}_D(D)\cong \mc{O}_{D_i'}({D_i'})$.
Hence, $\ord \mc{O}_D(D)=\ord \mc{O}_{D_i'}({D_i'})$, and the second assertion follows.

(ii) Note that $q^*D=D_1'$ in this case.  The morphism $q|_{D_1'}$ is either the quotient morphism 
by $\Z/2\Z$ or  isomorphic, hence 
the kernel $\Ker \widehat{q|_{D_1'}}$ of the dual morphism 
is $\mu_2$ in the former case, and is  trivial in the latter case. 
On the other hand, we have $q|_{D_1'}^*\mc{O}_D(D)\cong \mc{O}_{D_1'}({D_1'})\cong \mc{O}_{D_1'}$, and hence 
$\ord\mc{O}_D(D)=1$. In particular, $2D$ is a wild fiber.

(iii) In this case, we have $q^*D=2D_1'$, and hence 
we see $q|_{D_1'}^*\mc{O}_D(D)\cong \mc{O}_{D_1'}(2D_1')\cong \mc{O}_{D_1'}$. Thus $\ord \mc{O}_D(D)=1$, and  therefore, $2D$ is a wild fiber. 
\end{proof}

%%%%%%%%%%%%%%%%%%%%%%%%%%%%%%%%%%%%%%%%%%%%%%%%%%%%%%%%%%%%%%%%%%%%%%%%%%%%%%%%%%%%%%%%%%%%%%%%
%%%%%%%%%%%%%%%%%%%%%%%%%%%%%%%%%%%%%%%%%%%%%%%%%%%%%%%%%%%%%%%%%%%%%%%%%%%%%%%%%%%%%%%%%%%%%%%%
%%%%%%%%%%%%%%%%%%%%%%%%%%%%%%%%%%%%%%%%%%%%%%%%%%%%%%%%%%%%%%%%%%%%%%%%%%%%%%%%%%%%%%%%%%%%%%%

\section{Multiple fibers on elliptic ruled surfaces}\label{sec:singular_fiber}
 We study multiple fibers of an elliptic fibration $\pi\colon S\to \PP ^1$ on an elliptic ruled surface $S$
 in this section. We use the notation in \S \ref{subsec:elliptic}.

%%%
%%%
%%% \label{lem:ngeq2}

\begin{lem}\label{lem:ngeq2}
We have
$$
 \sum_{i=1}^\lambda \displaystyle \frac{a_i}{m_i}< 2+d.
$$ 
\end{lem}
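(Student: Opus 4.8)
The plan is to reinterpret the canonical bundle formula numerically and then play it off against the ruled-surface expression for $K_S$. Write $F$ for a general fiber of $\pi$. Since the base is $\PP^1$ we have $\deg \omega_{\PP^1}=-2$, and each multiple fiber satisfies $m_iD_i=\pi^*Q_i\sim F$, so that $D_i$ is numerically $\frac1{m_i}F$. Feeding this into the canonical bundle formula of Proposition \ref{prop:canonical}, together with $\pi^*(\omega_C\otimes\mc L_\pi^{-1})\equiv(-2-d)F$, I obtain
$$
K_S\equiv(-2-d)\,F+\sum_{i=1}^{\lambda}a_iD_i\equiv\Bigl(-2-d+\sum_{i=1}^{\lambda}\frac{a_i}{m_i}\Bigr)F .
$$
Thus $K_S$ is numerically a rational multiple $\kappa F$ of the fiber class, with $\kappa=-2-d+\sum_i a_i/m_i$, and the desired inequality $\sum_i a_i/m_i<2+d$ is exactly the assertion $\kappa<0$.

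To pin down the sign of $\kappa$ I would intersect with a fiber $F_f$ of the ruling $f\colon S\to E$. On one hand, from $K_S\equiv\kappa F$ we get $K_S\cdot F_f=\kappa\,(F\cdot F_f)$. On the other hand, the expression $K_S\sim-2C_0-f^*D$ recorded in \S\ref{subsec:elliptic_ruled} yields $K_S\cdot F_f=-2\,(C_0\cdot F_f)-(f^*D)\cdot F_f=-2$, since $C_0$ is a section and $f^*D$ is a sum of $f$-fibers. Hence $\kappa\,(F\cdot F_f)=-2$.

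It remains to check that $F\cdot F_f>0$. This holds because $F$, being an elliptic curve, cannot be contained in any fiber of $f$ (the $f$-fibers are rational), so $f|_F\colon F\to E$ is finite surjective and $F\cdot F_f=\deg(f|_F)>0$; equivalently, the two fibrations $\pi$ and $f$ are distinct. Combining this with $\kappa\,(F\cdot F_f)=-2$ forces $\kappa=-2/(F\cdot F_f)<0$, which is precisely $\sum_i a_i/m_i<2+d$.

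The computation is short, and I do not expect a genuine obstacle. The only points requiring a little care are the passage to numerical equivalence in the canonical bundle formula (legitimate, as both $\pi^*(\omega_C\otimes\mc L_\pi^{-1})$ and the classes $D_i$ are proportional to the fiber $F$) and the \emph{strictness} of the inequality. Strictness is what makes the two descriptions of $K_S$ necessary: it is forced by the nonzero value $K_S\cdot F_f=-2$ coming from the ruled structure, together with $F\cdot F_f>0$. (One can alternatively read off $\kappa\le 0$ from the nefness of $-K_S$ noted in \S\ref{subsec:elliptic_ruled}, but the explicit value $-2$ is what upgrades this to a strict inequality.)
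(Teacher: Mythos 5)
Your proof is correct, but it takes a genuinely different route from the paper's. The paper argues cohomologically: since $S$ is ruled, every plurigenus vanishes, and rewriting $\omega_S^{\otimes n}$ via the canonical bundle formula of Proposition \ref{prop:canonical} as $\pi^*\mc{M}$ twisted by an effective, fiber-supported fixed part, the vanishing $0=h^0(S,\omega_S^{\otimes n})=h^0(\PP^1,\mc{M})$ forces $\deg\mc{M}<0$, i.e. $\sum_i[\frac{na_i}{m_i}]<n(2+d)$ for every $n>0$; taking $n$ divisible by all the $m_i$ turns this into the strict inequality of the lemma. You instead argue by intersection theory against the ruling: writing $K_S\equiv\kappa F$ with $\kappa=\sum_i a_i/m_i-2-d$, the ruled-surface formula for $K_S$ (equivalently, adjunction on a fiber of $f$) gives $K_S\cdot F_f=-2$, and $F\cdot F_f>0$ because the elliptic fiber $F$ cannot lie in a rational fiber of $f$ and so dominates $E$; hence $\kappa=-2/(F\cdot F_f)<0$. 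Each approach buys something. The paper's argument uses only $\kappa(S)=-\infty$, so it applies verbatim to any elliptic fibration over $\PP^1$ on a surface of negative Kodaira dimension (for instance rational elliptic surfaces), whereas yours genuinely needs the second fibration $f$. In return, yours is purely intersection-theoretic, pins down the exact deficit $2+d-\sum_i a_i/m_i=2/(F\cdot F_f)$ rather than mere negativity, and anticipates the computation the paper itself performs later in the proof of Lemma \ref{lem:keyprop}, where $K_S$ is likewise expressed as a rational multiple of the reduced fiber class and intersected with $C_0$ and $F_f$.
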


\begin{proof}
By Proposition \ref{prop:canonical}, we have
\begin{equation}\label{eqn:PnS}
h^0(S,\omega_S^{\otimes n}) = h^0(S,\pi^* \mc{M} \otimes \mc{O}_S(\sum_{i=1}^\lambda(na_i - m_i[\frac{na_i}{m_i}])D_i)),
\end{equation}
where we set
$$
\mc M:= \omega_{\PP^1}^{\otimes n} \otimes \mc L_{\pi}^{\otimes -n} 
\otimes \str_{\PP^1}(\sum_{i=1}^\lambda \displaystyle [\frac{na_i}{m_i}]).
$$
Then since $\sum_{i=1}^\lambda(na_i - m_i[\frac{na_i}{m_i}])D_i$ 
is a fixed part of the linear system $|nK_S|$,  we have
\begin{align*}
0=h^0(S,\omega_S^{\otimes n}) =  h^0(S,\pi^*\mc M )=  h^0(\PP^1,\mc M ).
\end{align*}
Hence we obtain  
$
\sum_{i=1}^\lambda [\frac{na_i}{m_i}] <n(d+2)
$
for all $n>0$.
This is equivalent to the desired inequality
$$
 \sum_{i=1}^\lambda \displaystyle \frac{a_i}{m_i}< 2+d.
$$ 
\end{proof}

Now we can prove the following lemma.

%%%
%%%
%%%\label{lem:kappa-}

\begin{lem}\label{lem:kappa-}
The quantities $d, m_i, a_i$ satisfy the following.
\begin{center}
\begin{tabular}{|c|c|c|c|c|}
\hline
\mbox{Case}  & $d$ & $m_i$ & $a_i$ & $p$ \\ \hline \hline
$(I)$ & $0$ & no multiple fibers &  & $p \geq 0$ \\ \hline
$(II)$ & $0$ & $(m, m)$, $m>1$&  & $p \geq 0$ \\ \hline
$(III)$ & $0$ & $(2, 2, 2)$ &  & $p \geq 0$ \\ \hline
$(IV)$  & $-1$ & $({p^\alpha}^*)$, $\alpha>0$ & $a_1 = p^\alpha - 1$ & $p > 0$ \\ \hline
$(V)$  & $-1$ & $({p^\alpha}^*)$, $\alpha>0$ & $a_1 = p^\alpha - 2$ & $p > 0$ \\ \hline
$(VI)$ & $-1$ & $(2, 2^*)$ & $a_2 = 0$ & $p = 2$ \\ \hline
\end{tabular} 
\end{center}   
The symbol ${}^*$ denotes a wild fiber.
\end{lem}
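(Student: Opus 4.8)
The plan is to read off all constraints from the general theory of \S2.1 together with the special geometry of the ruled surface, and then run a finite case analysis. First I would record the global invariants: since $f\colon S\to E$ is a $\PP^1$-bundle over an elliptic curve, $R^1f_*\str_S=0$ and $f_*\str_S=\str_E$, so $\chi(\str_S)=\chi(\str_E)=0$ and $h^1(S,\str_S)=h^1(E,\str_E)=1$. A ruling fibre $F_f\cong\PP^1$ is not contained in any fibre of $\pi$ (those are elliptic by Lemma~\ref{lem:fiber_on_ruled}), so $\pi|_{F_f}$ is non-constant and the base of $\pi$ has genus $0$, i.e.\ $C=\PP^1$. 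Hence $S$ is of type $(m_1,\dots,m_\lambda\mid\nu_1,\dots,\nu_\lambda)$, Proposition~\ref{prop:canonical}(iii) gives $d=-h^0(\mc{T}_\pi)\le 0$, and since $h^1(S,\str_S)\le 1$ Theorem~\ref{thm:MR491720} applies: for each $i$ either $a_i=m_i-1$ or $a_i=m_i-\nu_i-1$.

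Next I would pin down $d$ and the number of wild fibres. Every $a_i\ge 0$, so Lemma~\ref{lem:ngeq2} gives $0\le\sum_i a_i/m_i<2+d$; hence $2+d>0$, i.e.\ $d\ge -1$, and therefore $d\in\{0,-1\}$. Moreover $\mc{T}_\pi$ is a torsion sheaf on $\PP^1$ whose support is exactly the set of wild points, each stalk having length $\ge 1$; thus the number $w$ of wild fibres satisfies $w\le h^0(\mc{T}_\pi)=-d$. So $d=0$ forces all fibres tame (whence $\nu_i=m_i$ and $a_i=m_i-1$ by \eqref{eqn:mpnu}), while $d=-1$ forces exactly one wild fibre and, by the inequality, at most one tame fibre.

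For $d=0$ all fibres are tame, so $\sum_i(1-1/m_i)<2$; since each $m_i\ge 2$ this bounds $\lambda\le 3$. The case $\lambda=0$ is (I); $\lambda=1$ is impossible because a single multiple fibre must be wild (Remark~\ref{rem:m1m2}(i)); $\lambda=2$ gives $(m,m)$ by Remark~\ref{rem:m1m2}(ii), which is (II). For $\lambda=3$ the inequality $\sum 1/m_i>1$ leaves only $(2,2,n)$, $(2,3,3)$, $(2,3,4)$, $(2,3,5)$; the algebraicity condition of Theorem~\ref{thm:algebraic}, which for tame fibres says $m_i$ divides $\operatorname{lcm}_{j\ne i}m_j$ for every $i$, eliminates all of these except $(2,2,2)$, giving (III).

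The crux is $d=-1$, where there is exactly one wild and at most one tame fibre, so $\lambda\in\{1,2\}$. If $\lambda=1$ the sole (wild) fibre has $m_1=p^\alpha$, $\nu_1=1$, $\alpha>0$ by Remark~\ref{rem:m1m2}(i), and the dichotomy above yields $a_1=p^\alpha-1$ (case (IV)) or $a_1=p^\alpha-2$ (case (V)). If $\lambda=2$, write the tame fibre as $m_1$ and the wild one as $m_2=p^{\alpha_2}\nu_2$ with $\alpha_2\ge 1$. Theorem~\ref{thm:algebraic} for $i=1$ forces $m_1\mid m_2$; the alternative $a_2=m_2-1$ then violates $\sum a_i/m_i<1$, so $a_2=m_2-\nu_2-1$ and $1<\tfrac{1}{m_1}+\tfrac{\nu_2+1}{m_2}$. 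Using $m_1\mid m_2$, a short check over the possible $m_1$ forces $m_1=2$, $p=2$ and $m_2=2\nu_2$; finally Theorem~\ref{thm:algebraic} for $i=2$ (which requires $\gcd(m_2/m_1,\nu_2)=1$) forces $\nu_2=1$, hence $m_2=2$ and $a_2=0$, which is (VI). I expect this last case---balancing the fractional inequality of Lemma~\ref{lem:ngeq2} against the divisibility and congruence constraints of Theorem~\ref{thm:algebraic} modulo the relation \eqref{eqn:mpnu}---to be the main obstacle; everything else is bookkeeping.
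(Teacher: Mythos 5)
Your proof is correct and takes essentially the same route as the paper's: the inequality of Lemma~\ref{lem:ngeq2} together with Proposition~\ref{prop:canonical} pins down $d\in\{0,-1\}$ and the count of wild fibres, Theorem~\ref{thm:MR491720} gives the dichotomy $a_i=m_i-1$ or $m_i-\nu_i-1$, and Theorem~\ref{thm:algebraic} (via Remark~\ref{rem:m1m2}) prunes the candidate lists, with your preliminary verifications ($C=\PP^1$, $\chi(\str_S)=0$, $h^1(S,\str_S)=1$) being facts the paper uses tacitly. The one point where you go beyond the paper is case (VI): the paper's proof cites only the inequality, Theorem~\ref{thm:MR491720} and $m_2=p^\alpha\nu_2$, which by themselves would not exclude, say, type $(2,3^*)$ with $p=3$ or $(2,(2\nu_2)^*)$ with $\nu_2>1$; your explicit appeals to Theorem~\ref{thm:algebraic} for $i=1$ (forcing $m_1\mid m_2$) and for $i=2$ (forcing $\gcd(m_2/m_1,\nu_2)=1$, hence $\nu_2=1$) supply precisely the argument the paper leaves implicit, and both congruence computations check out.
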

\if0
\begin{rem}
When  $\kappa(S) = 0$, a similar result is obtained 
in \cite{MR491720} for the case $g=0$, and \cite{Mi} for the case $g$ general.
If $S$ is rational, it is known that $S$ has at most one tame multiple fiber by \cite{Lang}.
\end{rem} 
\fi
\begin{proof}
We can divide into the following two cases;
(1) $d=0$  and (2) $d=-1$. (Equivalently,  $\sum_{i=1}^\lambda \displaystyle \frac{a_i}{m_i} <2$, and $<1$ respectively by Lemma \ref{lem:ngeq2}).

In the case (1), it follows from Proposition \ref{prop:canonical} (iii)
that $\pi$ has no wild fibers. 
In particular, we have
 $a_i = m_i - 1$ for all $i$ by Proposition \ref{prop:canonical} (ii).
Then $\pi$ has no multiple fibers, \emph{or}
$\pi$ has multiple fibers of type ($m$), 
($m_1, m_2$), ($2, 2, m$) or ($2, 3, m_3$), where $m$ and $m_i$ satisfy that
$$
2\le m, m_1,  m_2, \quad 3 \le m_3 \le 5.
$$
In the latter case, apply Theorem \ref{thm:algebraic} (and Remark \ref{rem:m1m2}), and then 
we see that the multiple fibers are of type ($m, m$) or ($2, 2, 2$).
 
In the case (2), Proposition \ref{prop:canonical} implies that 
$h^0 (\PP^1, \mc T_{\pi}) = 1,$
which means that $\pi$ has a single wild fiber.
Then
the inequality $\sum_{i=1}^\lambda {a_i}/{m_i} <1$ implies that
 the multiple fiber is unique (2-1), \emph{or} that 
$\pi$ has one tame fiber and one wild fiber (2-2). 

(2-1) In this case,  Remark \ref{rem:m1m2} (i) implies that the unique multiple fiber has the multiplicity $p^{\alpha}$ ($\alpha>0$) and $\nu=1$, and  
Theorem \ref{thm:MR491720}  implies  that $a=m-1$ or $m-2$. 

(2-2) Suppose that the type of $\pi$ is $(m_1,m_2^*)$. Then the inequality 
$$
\frac{m_1-1}{m_1}+\frac{a_2}{m_2}<1
$$
and Theorem \ref{thm:MR491720} yield that $a_2=m_2-\nu_2-1$.
Since $m_2$ is of the form $p^\alpha\nu_2$ with $\alpha>0$, 
we can see that $p=2$, the type of $\pi$ is $(2,2^*)$ and
$a_2=0$.
\end{proof}

%%%%%%%%%%%%%%%%%%%%%%%%%%%%%%%%%%%%%%%%%%%%%%%%%%%%%%%%%%%%%%%%%%%%%%%%%%%%%%%%%%%%%%%%%%%%%%%%%%
%%%%%%%%%%%%%%%%%%%%%%%%%%%%%%%%%%%%%%%%%%%%%%%%%%%%%%%%%%%%%%%%%%%%%%%%%%%%%%%%%%%%%%%%%%%%%%%%%%
%%%%%%%%%%%%%%%%%%%%%%%%%%%%%%%%%%%%%%%%%%%%%%%%%%%%%%%%%%%%%%%%%%%%%%%%%%%%%%%%%%%%%%%%%%%
\section{Proof of Theorem \ref{thm:main1}}\label{sec:main1}
Let $S$ be an elliptic ruled surface admitting a $\PP^1$-bundle structure $f \colon S \rightarrow E$ over an elliptic curve $E$. We use the notation in \S \ref{subsec:elliptic_ruled}.

%%%
%%%
%%% \label{lem:keyprop}

\begin{lem}\label{lem:keyprop}
Suppose that $S$ has an elliptic fibration $\pi \colon S \rightarrow \PP ^1$.
Then the equality $e=0$ holds in the cases (I), (II) and (V) in Lemma \ref{lem:kappa-}, and 
the equality $e=-1$ holds in the cases (III), (IV) and (VI) in Lemma \ref{lem:kappa-}.
\end{lem}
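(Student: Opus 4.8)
The plan is to compare the two descriptions of the canonical class and read off $e$ from the way $\pi$ covers $E$. From the $\PP^1$-bundle structure recorded in \S\ref{subsec:elliptic_ruled} we have $K_S \equiv -2C_0 - eF_f$, while the canonical bundle formula (Proposition~\ref{prop:canonical}), together with $\deg(\omega_{\PP^1}\otimes\mc L_\pi^{-1}) = -2-d$, gives $K_S \sim \pi^*\mc A + \sum_i a_iD_i$ for a divisor class $\mc A$ of degree $-2-d$ on $\PP^1$. Writing $F_\pi$ for a general fibre of $\pi$ and using $\pi^*\mc A \equiv (-2-d)F_\pi$ and $D_i \equiv \frac1{m_i}F_\pi$, I get the numerical equivalence $K_S \equiv cF_\pi$ with $c := -2-d+\sum_i a_i/m_i < 0$ (the sign by Lemma~\ref{lem:ngeq2}). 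Intersecting both expressions of $K_S$ with $F_f$ (recall $C_0\cdot F_f=1$, $F_f^2=0$) yields $-2 = K_S\cdot F_f = c\,(F_\pi\cdot F_f)$, so the covering degree $n := \deg(f|_{F_\pi}) = F_\pi\cdot F_f = -2/c$. Substituting the data of each row of Lemma~\ref{lem:kappa-} gives $n = 1,\,m,\,4,\,2p^\alpha,\,p^\alpha,\,4$ in cases (I)--(VI); hence the degree $n_i := \deg(f|_{D_i}) = F_f\cdot D_i = n/m_i$ of each reduced multiple fibre over $E$ equals $1$ in (II), (V) and $2$ in (III), (IV), (VI).

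The easy direction is $e=0$. Any reduced fibre $\Gamma$ of $\pi$ is a smooth elliptic curve with $\Gamma^2=0$; writing $\Gamma \equiv \alpha C_0 + \beta F_f$ in $\mathrm{Num}(S) = \Z C_0\oplus \Z F_f$ (with $C_0^2=-e$, $C_0\cdot F_f=1$, $F_f^2=0$) gives $\alpha = \deg(f|_\Gamma)$ and $0=\Gamma^2 = -e\alpha^2+2\alpha\beta$, so $e\alpha = 2\beta$. Thus whenever some reduced fibre covers $E$ with odd degree, $e$ is even and hence $e=0$. This applies to the general fibre in case (I) ($n=1$) and to a reduced multiple fibre $D_i$ in cases (II), (V) ($n_i=1$), giving $e=0$ there.

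For the remaining cases (III), (IV), (VI) I must rule out $e=0$. Suppose $e=0$; then $K_S\equiv -2C_0$, and intersecting with $F_\pi$ and using $K_S\cdot F_\pi=0$ (adjunction, $F_\pi$ being elliptic) gives $C_0\cdot F_\pi=0$. Since $C_0$ is irreducible and $|F_\pi|$ is base point free, $\pi|_{C_0}$ is non-dominant, so $C_0$ lies in a single fibre of $\pi$; by Lemma~\ref{lem:fiber_on_ruled} the reduction of that fibre is an irreducible smooth elliptic curve, and as $C_0$ is an irreducible curve inside it, $C_0$ equals that reduction. But $C_0$ is a section of $f$, so $\deg(f|_{C_0})=1$, whereas in these three cases every reduced fibre of $\pi$ covers $E$ with degree $n\ge 2$ (general fibre) or $n_i=2$ (a $D_i$) --- a contradiction. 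Hence $e=-1$.

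I expect the genuine obstacle to be this last step: the rigidity that under $e=0$ the minimal section is forced to be \emph{vertical} for $\pi$ and so coincides with a reduced fibre, which is impossible once every reduced fibre is a genuine cover of $E$. The computation of $c$, and hence of the covering degrees $n$ and $n_i$ from Lemma~\ref{lem:kappa-}, is the other place demanding care, since the whole dichotomy hinges on the parities $n_i=1$ versus $n_i=2$. One should also confirm the standard facts used silently --- that $\mathrm{Num}(S)$ is freely generated by $C_0$ and $F_f$, that $C_0$ is irreducible, and that $F_\pi$ is nef --- so that $C_0\cdot F_\pi=0$ genuinely forces verticality.
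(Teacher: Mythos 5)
Your proof is correct and takes essentially the same route as the paper's: both use the canonical bundle formula with the case data of Lemma~\ref{lem:kappa-} to write $K_S$ numerically as a negative multiple of the fibre class (your $c$ and the degrees $n$, $n_i=n/m_i$ repackage the paper's computation $K_S\equiv -2D$ in (I), (II), (V) versus $K_S\equiv -D$ in (III), (IV), (VI)), then get $e=0$ by a parity argument and exclude $e=0$ in the remaining cases because $C_0\cdot F_\pi=0$ would force $C_0$ to coincide with the reduction of a fibre, contradicting $D\cdot F_f=2\neq 1=C_0\cdot F_f$. The only fact you use silently that should be cited is from \S\ref{subsec:elliptic_ruled}: the existence of an elliptic fibration makes $-K_S$ nef, hence $e\in\{0,-1\}$, which is what turns ``$e$ even'' into $e=0$ and ``$e\neq 0$'' into $e=-1$.
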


\begin{proof}
First of all, we have $e=0$ or $-1$, as is mentioned in \S \ref{subsec:atiyah}.  
Note that the multiplicities of all multiple fibers of $\pi$ 
are common by Lemma \ref{lem:kappa-}.
Hence $D_i \equiv D_j$ for each $i,j$ in the notation in \S \ref{subsec:elliptic}..  
We denote the common multiplicities by $m$. If $\pi$ has no multiple fibers, we put $m=1$ for simplicity.

Proposition \ref{prop:canonical} implies that 
$$
K_S\sim (-2-d)F_{\pi}+\sum _ia_iD_i\equiv ((-2-d)m+\sum_ia_i)D,
$$
where $D$ is a reduction of a multiple fiber of $\pi$.
By a direct computation, we have
\begin{equation*}
(-2-d)m+\sum _i a_i=
\begin{cases}
         -2   & \text{in the cases (I),(II) and (V),} \\
         -1 & \text{in the cases (III), (IV) and (VI).}
\end{cases}
\end{equation*}
Consequently in the cases (I), (II) and (V),
the integer $e= K_S \cdot C_0$ should be even, i.e.~$e=0$.
In the cases (III), (IV) and (VI), 
we have
$
D\cdot F_f=-K_S\cdot F_{f}=2.
$
Combining this with the equality $C_0\cdot F_f=1$,
 we conclude that $C_0$ cannot be contained in a fiber of $\pi$, equivalently $e=-(C_0)^2\ne 0$. This means $e=-1$.
\end{proof}

\begin{rem}\label{rem:2DD}
In the proof of Lemma \ref{lem:keyprop}, we show that 
\begin{equation*}
K_S\equiv
\begin{cases}
         -2D\equiv -2C_0   &\mbox{ if } e=0, \\
         -D\equiv -2C_0+F_f & \mbox{ if } e=-1.
\end{cases}
\end{equation*}
In the diagram \eqref{eqn:quotient_construction}, 
take a point $Q\in \PP^1$ and put $Q':=\psi(Q)$. Denote the fiber over the point $Q'$ by $m'D'$, where $m'$ is 
its multiplicity, and the fiber over the point $Q$ by $mD$ similarly. 
Suppose that $e=0$ for $S'$, and $e=-1$ for $S$. Since $q^*K_S=K_{S'}$ holds by Lemma 
\ref{lem:semi_ample} (i), we obtain 
\begin{equation}\label{eqn:2DD}
2D'\equiv q^*D.
\end{equation}
 \end{rem}

%%%%%%%%%%%%%%%%%%%%%%%%%%%%%%%%%%%%%%%%%%%%%%%%%%%%%%%%%%%%%%%%%%%%%%%%%%%%%%%%%%%%%%%%%%

We now give the proof of Theorem \ref{thm:main1}.

\noindent \emph{Proof of Theorem \ref{thm:main1}.}
(i) 
In the case $e=0$, we start with the following claim. 

\begin{cla}\label{cla:palpha}
Assume that $S$ has an elliptic fibration $\pi$ admitting at least one multiple fiber. 
Then $\pi$ is of type $(m,m)$
if and only if $\mathcal{E}$ is decomposable.
\end{cla}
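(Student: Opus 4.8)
The plan is to split the claim into its two implications and, in each direction, use the numerical constraint established in Lemma \ref{lem:keyprop} together with Lemmas \ref{lem:p0} and \ref{lem:p1}, which compute the relevant spaces of global sections. Recall from \eqref{eqn:possibility} that for $e=0$ the only possibilities are $\mc{E}\cong \str_E\oplus \mc{L}$ with $\mc{L}\in \Pic^0E$ (the decomposable case) and $\mc{E}\cong \mc{E}_{2,0}$ (the indecomposable case). Since $S$ is assumed to carry an elliptic fibration with at least one multiple fiber, the only applicable rows of Lemma \ref{lem:kappa-} with $e=0$ are (II), of type $(m,m)$ with $m>1$, and (V), of type $({p^\alpha}^*)$ with $\alpha>0$. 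By Lemma \ref{lem:keyprop} these are exactly the cases that produce $e=0$, so proving the claim amounts to showing that type $(m,m)$ corresponds precisely to $\mc{E}$ decomposable, while type $({p^\alpha}^*)$ corresponds to $\mc{E}$ indecomposable.

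First I would treat the decomposable direction. Suppose $\mc{E}\cong \str_E\oplus \mc{L}$ with $\ord\mc{L}=m$. By Proposition \ref{prop:non-existence} (ii) we must have $m<\infty$, so an elliptic fibration exists by Proposition \ref{prop:semi_ample_e_-1} (ii). To pin down the type, I would count pluricanonical sections: as in the proof of Proposition \ref{prop:non-existence}, $h^0(S,\omega_S^{\otimes -n})=h^0(E,\Sym^{2n}(\str_E\oplus \mc{L}))$, and Lemma \ref{lem:p1} shows this jumps from $1$ to $2$ exactly when $2n$ first reaches $\ord\mc{L}=m$. This forces the fibration to have a multiple fiber of multiplicity $m$, and since $e=0$ excludes the single-wild-fiber case (V) unless $\mc{E}$ is indecomposable, the type must be $(m,m)$. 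Conversely, if $\mc{E}\cong \mc{E}_{2,0}$ is indecomposable, Lemma \ref{lem:p0} gives $h^0(E,\Sym^{2n}\mc{E}_{2,0})=2$ precisely when $2n=p$, i.e.\ only in characteristic $p=2$ with $n=p^\alpha$ as dictated by Lemma \ref{lem:kappa-}; this rules out type $(m,m)$ (which would require the section count to jump at two distinct fibers with $a_i=m_i-1$, a tame configuration incompatible with $d=0$ forcing no wild fibers while the indecomposable bundle forces a wild fiber by case (V)).

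The cleanest way to organize the converse is therefore contrapositive: if $\pi$ is \emph{not} of type $(m,m)$, then by the above dichotomy within $e=0$ it must be of type $({p^\alpha}^*)$, which is case (V); and case (V) carries a wild fiber, so by Proposition \ref{prop:canonical} (iii) we have $d=-\chi(\str_S)-h^0(C,\mc{T}_\pi)$ with $h^0(C,\mc{T}_\pi)=1$, consistent only with the indecomposable bundle $\mc{E}_{2,0}$ (the decomposable bundles $\str_E\oplus \mc{L}$ produce only tame fibers, as their symmetric powers split by Lemma \ref{lem:p1} and yield the configuration $(m,m)$). I expect the main obstacle to be ensuring that the two section-counting computations genuinely separate the two bundle types rather than merely being consistent with them: the delicate point is that Lemma \ref{lem:p0} allows $h^0=2$ only at $m=p$ in positive characteristic, so one must verify that the indecomposable bundle \emph{cannot} realize a type-$(m,m)$ fibration for any $m$, and symmetrically that the decomposable bundle \emph{cannot} realize a single wild fiber. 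Matching these section-jump patterns against the exhaustive list in Lemma \ref{lem:kappa-} is what closes the argument.
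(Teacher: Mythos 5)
Your proposal has a genuine gap: it is circular at the decisive step. You correctly reduce, via Lemmas \ref{lem:kappa-} and \ref{lem:keyprop}, to the dichotomy that an elliptic fibration with a multiple fiber and $e=0$ is of type $(m,m)$ (case (II)) or $({p^\alpha}^*)$ (case (V)); but you then match cases to bundles by asserting that ``$e=0$ excludes the single-wild-fiber case (V) unless $\mc{E}$ is indecomposable'' and, in the other direction, that ``the indecomposable bundle forces a wild fiber by case (V)''. The correspondence (II) $\leftrightarrow$ decomposable and (V) $\leftrightarrow$ indecomposable is precisely the content of Claim \ref{cla:palpha}; neither Lemma \ref{lem:kappa-} nor Lemma \ref{lem:keyprop} attaches a bundle type to a case, so both implications are assumed rather than proved. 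Your section counts do not close the hole: for $\str_E\oplus\mc{L}$, Lemma \ref{lem:p1} produces the single pencil $|mC_0|$, hence \emph{one} multiple fiber $mC_0$, and nothing in your text produces the second multiple fiber required for type $(m,m)$. Worse, Lemma \ref{lem:p0} computes $h^0(E,\Sym^m\mc{E}_{2,0})$ only for $m\le p$, so your reading that $h^0(E,\Sym^{2n}\mc{E}_{2,0})=2$ ``precisely when $2n=p$, i.e.\ only in characteristic $p=2$'' overreaches the lemma and is in fact false: for every $p>0$ one has $h^0(E,\Sym^{2n}\mc{E}_{2,0})\ge 2$ once $2n\ge p$ (indeed $\PP(\mc{E}_{2,0})$ has an elliptic fibration for \emph{all} $p>0$, case (i-5) of Theorem \ref{thm:main1}), so the jump pattern you have access to cannot by itself rule out type $(m,m)$ for $\mc{E}_{2,0}$. (A minor further inaccuracy: $h^0(S,\omega_S^{\otimes -n})$ equals $h^0(E,\Sym^{2n}\mc{E}\otimes(\det\mc{E})^{\otimes n})$, not $h^0(E,\Sym^{2n}\mc{E})$; this is harmless for $\mc{E}_{2,0}$ but shifts the count for $\str_E\oplus\mc{L}$.)

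The missing idea is geometric, and it is exactly what the paper's proof uses: $\mc{E}$ is decomposable if and only if $f$ admits two disjoint sections (\cite[Exercise V.2.2]{MR0463157}), combined with Remark \ref{rem:2DD}: for $e=0$ the reduction $D$ of any multiple fiber satisfies $D\equiv C_0$, hence $D\cdot F_f=1$ and $D$ is a section of $f$; conversely any irreducible curve $D\equiv C_0$ has $-K_S\cdot D=0$ and so is the reduction of a multiple fiber. With this, type $(m,m)$ yields two disjoint sections $D_1,D_2$, hence decomposability; and two disjoint sections $C_1\equiv C_0+b_1F_f$, $C_2\equiv C_0+b_2F_f$ satisfy $0=C_1\cdot C_2=b_1+b_2$, so $b_1=b_2=0$, both are reductions of multiple fibers, and the dichotomy forces case (II). If you want to stay closer to your cohomological route, the direction ``indecomposable $\Rightarrow$ not of type $(m,m)$'' can be repaired using Atiyah's vanishing from \S\ref{subsec:atiyah}: writing $D_1\sim C_0+f^*\mk{a}$ with $\deg\mk{a}=0$, effectivity gives $h^0(E,\mc{E}_{2,0}\otimes\mc{O}_E(\mk{a}))\ne 0$, forcing $\mk{a}\sim 0$, and then $h^0(S,\mc{O}_S(C_0))=h^0(E,\mc{E}_{2,0})=1$ gives $D_1=C_0=D_2$, contradicting the existence of two distinct multiple fibers. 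As written, however, your proposal contains neither this step nor the disjoint-sections criterion, so neither implication of the claim is actually established.
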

\begin{proof}
Recall first that 
$\mc E$ is decomposable if and only if there exist two sections $C_1$ and $C_2$ of $f$ such that $C_1\cap C_2=\emptyset$ (\cite[Exercise V.2.2]{MR0463157}).
Moreover,  Remark \ref{rem:2DD} says that in the case $e=0$, $D\equiv C_0$, where $mD$ is a multiple fiber 
with multiplicity $m$. To the contrary, if some irreducible curve $D$ satisfying $D\equiv C_0$, it turns out that 
$D$ is the reduction of some multiple fiber of $\pi$, since $-K_S\cdot D=0$.

Suppose that $\pi$ has two multiple fibers $mD_1$ and $mD_2$ with multiplicities $m$. 
Then $D_1$ and $D_2$ are sections of $f$, and hence, $\mc{E}$ is decomposable.

Next, suppose that $C_1$ and $C_2$ are two sections of $f$ satisfying $C_1\cap C_2=\emptyset$.
Then we can put $C_1 \equiv C_0 + b_1 F_f$ and $C_2 \equiv C_0 + b_2 F_f$ for some $b_1, b_2 \geq 0$.
The equalities
$$
0 =C_1 \cdot C_2= C_0^2 + (b_1 + b_2)C_0 \cdot F_f = b_1 + b_2
$$
yield $b_1 = b_2 = 0$.
Hence $C_1$ and $C_2$ are the reductions of some multiple fibers of $\pi$. Therefore, 
Lemma \ref{lem:keyprop} says that this situation fits into the case $\pi$ is of type $(m,m)$.
\end{proof}
\noindent

Let us consider the case $\mc E$ is decomposable. Then, we can take $\mc L\in \Pic ^0E$, $\ord \mc L=m>1$ such that $\mathcal{E} = \str_E \oplus \mathcal{L}$.
In this case, since $$h^0(S,\mc{O}_S(nC_0))=h^0(E,\Sym^n( \str_E \oplus \mathcal{L}))$$ holds,
we can see by Lemma \ref{lem:p1}
that the elliptic fibration $\pi$ has a multiple fiber $mC_0$, 
and thus we can apply Claim \ref{cla:palpha} to conclude 
that $\pi$ is of type $(m,m)$.

Next we consider the case $\mc E$ indecomposable, that is,
$\mc E=\mc E_{2,0}$. Then   
combining Lemma \ref{lem:p0} with Lemma \ref{lem:p1}, we know that $S$ has an elliptic fibration $\pi$
in the case $p>0$, and no elliptic fibrations in the case $p=0$.
 We know from Claim \ref{cla:palpha} that 
$\pi$ is of type $(p^{\alpha*})$. Recall that
we have the following diagram:
\[ 
\xymatrix{ 
F \ar[d]_\varphi \ar@{}[dr]|{\square}  &  F\times \PP^1 \ar[l]_{f'} \ar[r]^{\pi'} \ar[d]^q  & \PP ^1   \ar[d]^\psi  \\
E      &  \PP (\mc{E}_{2,0})      \ar[l]_{f}           \ar[r]^{\pi} &  \PP ^1 
}
\]
Here, 
$\widehat{\varphi}\colon E\to F$ is a purely inseparable isogeny of degree $p$.  
Applying Lemma \ref{lem:mult_branch} (i) for $m=p^\alpha$, $m_i'=1$, $e_i\le p=\deg \psi$, we obtain
$\alpha=1$.

(ii) In the case $e=-1$, $S$ has an elliptic fibration by Lemma \ref{prop:semi_ample_e_-1} (iii).
When $p\ne 2$,  we apply Lemma \ref{lem:mult_branch} (i) for the diagram
\begin{equation}\label{eqn:e-1pne2}
\xymatrix{ 
F \ar[d]_\varphi \ar@{}[dr]|{\square}  &  \PP(\mc{O}_F\oplus \mc{L})  \ar[l]_{f'\quad} \ar[r]^{\quad \pi'} \ar[d]^q  & \PP ^1   \ar[d]^\psi  \\
E      &  \PP (\mc{E}_{Q})      \ar[l]_{f}           \ar[r]^{\pi} &  \PP ^1, 
}
\end{equation}
where $\varphi$ is a separable isogeny of degree $2$ and $\mc{L}$ is an order $2$ element in $\Pic^0 F$.
It turns out that the case (IV) is impossible because $\pi'$ is of type $(2,2)$.
Assume that $\pi$ has a wild fiber $2D$. Then $\mu:=\ord \mc{O}_D(D)=1$,  which contradicts with
$p\ne 2$ and \eqref{eqn:mpnu}. Hence, we conclude that the case (VI) is also impossible, and thus 
the case (III) occurs.

When $p=2$ and  $E$ is  supersingular, then Lemma \ref{lem:multiple_sup} implies that the  
cases (III) and (VI) in Lemma \ref{lem:kappa-} do not occur. Hence, 
the unique possibility is the case (IV) by Lemma \ref{lem:keyprop}, that is, $S$ is an elliptic surface of type $(2^\alpha)$. We have the following diagram:
\[ 
\xymatrix{ 
E_2 \ar[d]_\Fr \ar@{}[dr]|{\square}  &  \PP(\mc{E}_{2,0}) \ar[l]_{f'} \ar[r]^{\pi'} \ar[d]^q  & \PP ^1   \ar[d]^\psi  \\
E      &  \PP (\mc{E}_{Q})      \ar[l]_{f}           \ar[r]^{\pi} &  \PP ^1 
}
\]
Since $\PP(\mc{E}_{2,0})$ has the unique multiple fiber of multiplicity $2$, 
it forces that $\alpha=1$ by \eqref{eqn:2DD}.

When $p=2$ and $E$ is ordinary, we want to show the case (VI) occurs.
It suffices to exclude the cases (III) and (IV) by Lemma \ref{lem:keyprop}.  
Let us consider the diagram \eqref{eqn:e-1pne2} again.
To obtain a contradiction, first assume that the case (IV) occurs, that is, 
$\pi$ has the unique multiple fiber $2^\alpha D$ over a point $Q$, and $2^\alpha D$ is a wild fiber.
Since $\pi'$  has no wild fibers, Lemma \ref{lem:deg2} (i)
yields that $\psi$ is branched over $Q$, thus $\psi^*Q=2Q'$ for some $Q'\in \PP^1$.
But in this case, it follows from  Lemma \ref{lem:mult_branch} (ii) that 
$\pi'$ has no multiple fibers over points besides $Q'$, which induces a contradiction 
since $\pi'$ is of type $(2,2)$.

Next, assume that the case (III) occurs, that is, 
$\pi$ has three multiple fibers $2D_i$ over points $Q_i$ ($i=1,2,3$). 
Since $\psi$ is separable, its ramification divisor has degree $2$ by the Hurwitz formula.
Because $\psi$ is wildly ramified, \cite[Proposition IV.2.2(c)]{MR0463157} yields that there is the unique branch point in 
$\PP^1$. Hence, we may assume that $Q_1$ and $Q_2$ are not branch points of $\psi$, and then   
$\PP(\mc{O}_F\oplus \mc{L})$ has at least $4$ multiple fibers by Lemma \ref{lem:mult_branch} (iii). This is absurd. \qed

\begin{rem}
It seems worthwhile to summarize the connection between 
known examples in the literature and the result in Theorem \ref{thm:main1}.
\begin{enumerate}
\item 
When $p>0$ and a vector bundle $\mc E$ is of the form 
$\mc O_E\oplus \mc L$ with $\ord \mc L=p$ on an ordinary elliptic curve $E$, 
 then the elliptic surface in \cite[Example 4.9]{MR799664} is isomorphic to  $\mb P(\mc E)$.
\item
When $p>0, e=0$ and $\mc E$ is an indecomposable vector bundle on an ordinary (resp.~supersingular) 
elliptic curve $E$, 
then the elliptic surface in \cite[Example 4.7]{MR799664} (resp.~\cite[Example 4.8]{MR799664}) 
is isomorphic to $\mb P(\mc E)$. Here we use the fact that an elliptic curve which is isogeny to an 
ordinary (resp.~supersingular) elliptic curve is again ordinary (resp.~supersingular).
%\end{itemize}
%\end{rem}
%\begin{exa}\label{exa:mukai}
\item
Let $E$ be an elliptic curve, and consider the morphism
$$ 
a\colon E\times E\to E \qquad (x_1,x_2)\mapsto x_1+x_2
$$
defined by the addition, and the morphism
$$
i\colon   E\times E\to E \qquad (x_1,x_2)\mapsto x_1-x_2
$$
defined by the subtraction. Then 
we obtain the following commutative diagram:
\begin{align*}%\label{arr:quotient}
\xymatrix{ 
E  \ar@{=}[d]   &   E\times E    \ar[l]_{a} \ar[r]^{i}\ar[d]^{q'} & E  \ar[d]^{q}  \\
E                       & S   \ar[l]^{f} \ar[r]_{\pi}        &  \PP ^1 
}
\end{align*}
Here we denote the symmetric product $\Sym ^2 (E)$ by $S$, 
and define $q'$ to be the quotient morphism 
by the involution $(x_1,x_2)\mapsto (x_2,x_1)$,
$q$ to be the quotient morphism by the action $x\mapsto -x$.  
 
Then we know that $q$ is ramified at the origin $O$ of $E$, and the points of order $2$ in $E$.
We can also see that $\pi$ is an elliptic fibration, and $f$ is a $\PP^1$-bundle. 
Furthermore the equality
\begin{align*}
&\bigl\{ p\in \PP^1 \mid \pi^*(p) \mbox{ is a multiple fiber}\bigr  \}\\
=&\bigl\{ q(p') \in \PP^1 \mid  \mbox{ $\ord p'=2$ in $E$}\bigr   \}
\end{align*}
holds. We also see that every multiple fiber has multiplicity $2$.

Suppose first that $p\ne 2$.
Then there are exactly $3$ points of order $2$ in $E$, hence
$S$ fits into the case in Theorem \ref{thm:main1} (ii-1).
 
Secondly suppose that $p=2$ and $E$ is an ordinary elliptic curve. Then since there is a unique point of order $2$ in $E$,
$S$ fits into the case in Theorem \ref{thm:main1} (i-5). (We can actually check that the section $C_0$ of $f$ is the reduction of the 
multiple fiber of $\pi$.)

Finally suppose that $p=2$ and $E$ is a supersingular elliptic curve. Then since there are no points of order $2$ in $E$,
$S$ fits into the case in Theorem \ref{thm:main1} (i-1). 

In \cite[page 451]{MR131423}, it is mentioned that $\Sym^n(E)$ is given as a projective bundle $\PP(\mc{E})$ on $E$, where
$\mc{E}$ is a vector bundle in $\mc{M}_E(n,n-1)$. This statement seems incompatible with the above results in the case $p=2$.
\end{enumerate}
\end{rem}

%\end{exa}

%%%%%%%%%%%%%%%%%%%%%%%%%%%%%%%%%%%%%%%%%%%%%%%%%%%%%%
%%%%%%%%%%%%%%%%%%%%%%%%%%%%%%%%%%%%%%%%%%%%%%%%%%%%%%
%%%%%%%%%%%%%%%%%%%%%%%%%%%%%%%%%%%%%%%%%%%%%%%%%%%%%%
\section{Proof of Theorem \ref{thm:reduction}}\label{sec:reduction}
In this section we prove Theorem \ref{thm:reduction}.
%Theorem \ref{thm:main1} teaches us that an elliptic fibration  $\pi$ on an elliptic ruled surfaces $S$ often has multiple fibers. On the other hand, we observe in Theorem \ref{thm:reduction} that by taking a suitable finite cover of $S$, we obtain an elliptic fibration $\pi'$ having milder singular fibers (in Theorem \ref{thm:reduction}, $\pi'$ has no singular fibers).Such  phenomena are already observed under some situations.See, for example, \cite[\S 6, \S 7]{MR799664} and \cite[Theorem A]{MR1753506}. 
For this purpose, we illustrate the diagram \eqref{eqn:quotient_construction} in 
Lemma \ref{lem:semi_ample} when $S$ has an elliptic fibration with multiple fibers, namely in the cases
(i-2), (i-5), (ii-1), (ii-2) and (ii-3) in Theorem \ref{thm:main1}.
We also study the ramification of $\psi$, if $\psi$ is separable.

%%%%%%%%%%
\subsection{Case (i-2): $\PP (\mc{O}_E\oplus \mc{L})$, $p\ge 0$,  $m=\ord \mc{L}<\infty$}\label{subsec:O+L}  
Take the quotient morphism 
$$
E\cong \Pic^0E\to F:=\Pic^0E/\Span{\mc{L}}
$$ 
and define $\varphi$ to be its dual. 
Then we obtain the following diagram:  
\[ 
\xymatrix{ 
F \ar[d]_\varphi \ar@{}[dr]|{\square}  &  F\times \PP^1 \ar[l]_{f'} \ar[r]^{\pi'} \ar[d]^q  & \PP ^1   \ar[d]^\psi  \\
E      &  \PP (\mc{O}_E\oplus \mc{L})      \ar[l]_{f\quad }           \ar[r]^{\quad \pi} &  \PP ^1 
}
\]
Theorem \ref{thm:main1} says that $\pi$ is of type $(m,m)$. Suppose that $\pi$ has multiple fibers over points $Q_1,Q_2$.  
When $m$ is not divisible by $p$, then Lemma \ref{lem:mult_branch} (ii) implies  that $\psi$ is branched over $Q_1,Q_2$. 
When $E$ is ordinary and $m=p^n$ ($n>0$), then 
all of $\varphi$, $q$ and $\psi$ are purely inseparable of degree $p^n$.
\if0
In \cite[Example 4.9]{MR799664} and \cite[\S 3]{Ue17},  diagonal 
$G$-actions on $F\times \PP^1$ are given, where  
\begin{equation*}
G:=
\begin{cases}
\Z/m\Z &\quad\mbox{ in the case $m$ is not divisible by $m$},\\
\alpha_p &\quad\mbox{ in the case $F$ is ordinary and $m=p>0$}.
\end{cases}
\end{equation*}
Then if we set $E:=F/G$ and take $\mc{L}\in \Pic^0E$ with order $m$ appropriately,  
we have an isomorphism $ \PP (\mc{O}_E\oplus \mc{L}) \cong  (F\times \PP^1)/G$.

Note that if $m=p>0$, $E$ cannot be supersingular because of the existence of an order $p$
element $\mc{L} \in \Pic^0E$.
\fi
%In the case $m$ is not divisible by $p$,  take an order $m$ element $a\in F$ and 
%a primitive root of unity $\zeta\in k$. Then consider the action of $G=\Z/m\Z$ on $F\times \PP^1$ via 
%$$(x,y)\mapsto (x+\alpha,\zeta y).$$
%Then if we take $\mc{L}\in \Pic^0E$ with order $m$ appropriately,  
%we have an isomorphism $ \PP (\mc{O}_E\oplus \mc{L}) \cong  (F\times \PP^1)/G$.
%the quotient morphisms $F\to F/(\Z/m\Z)$, $F\times \PP^1\to (F\times \PP^1)/(\Z/m\Z)$, $\PP^1\to \PP^1/(\Z/m\Z)$ coincide with $\varphi$, $q$, $\psi$ respectively.  

%In the case $m=p$ Katsura and Ueno give a diagonal $\mu_p$-action on $F\times \PP^1$  in \cite[Example 4.9]{MR799664}  when $F$ is ordinary.  In this case as well, the quotient morphisms by the action coincide with the above one, and we see $\varphi=\Fr$.

%%%%%%%%%%%%%%%%%
\subsection{Case (i-5): $\PP (\mc{E}_{2,0})$, $p>0$}\label{E20p>0} 
Take $\varphi \colon F\to E$ such as $\widehat{\Fr}=\varphi$.
% \colon E\to F$ is a purely inseparable isogeny of degree $p$.
 Then we have the following diagram. 
\begin{equation*}\label{eqn:2,0,p}
\xymatrix{ 
F \ar[d]_\varphi\ar@{}[dr]|{\square}   &  F\times \PP^1 \ar[l]_{f'} \ar[r]^{\pi'} \ar[d]^q  & \PP ^1   \ar[d]^\psi  \\
E      &  \PP (\mc{E}_{2,0})      \ar[l]_{f}           \ar[r]^{\pi} &  \PP ^1 
}
\end{equation*}
We see in Theorem \ref{thm:main1} that $\pi$ has the unique (wild) multiple fiber $pD$ over a point $Q_1\in \PP^1$. 
If $E$ is ordinary, $\varphi$, $q$ and $\psi$ are separable of degree $p$, and we can see from Lemma \ref{lem:mult_branch} 
(ii) that $\psi$ is wildly ramified and $Q_1$ is the unique branch point of $\psi$.
If $E$ is supersingular, all of $\varphi$, $q$ and $\psi$ are purely inseparable of degree $p$.

\if0
In \cite[Examples 4.7, 4.8]{MR799664}, Katsura and Ueno give a natural diagonal 
$G$-action on $F\times \PP^1$, where  
\begin{equation*}
G:=\ker \varphi=
\begin{cases}
\Z/p\Z &\quad\mbox{ if $F$ is ordinary},\\
\alpha_p &\quad\mbox{ if $F$ is supersingular}.
\end{cases}
\end{equation*}
We see in \cite{MR799664} that  $(F\times \PP^1)/G$ is an elliptic ruled surface with 
an elliptic fibration $\pi_G\colon (F\times \PP^1)/G\to \PP ^1/G\cong \PP^1$, and $\pi_G$
has the unique multiple fiber satisfying $(a/m)=(p-2/p^*)$, 
and hence  Theorem \ref{thm:main1} implies there is an isomorphism 
$
(F\times \PP^1)/G\cong \PP (\mc{E}_{2,0}).
$ 

\begin{cla}
There are isomorphisms $$
g_E\colon E\to F/G,\quad  g_S \colon  \PP (\mc{E}_{2,0})\to (F\times \PP^1)/G, \quad
g_{\PP^1}\colon \PP^1\to \PP^1/G$$
which make the following diagram commutative:
\[
\xymatrix{
F
\ar[rd]^\varphi \ar[dd]_{\varphi_G} & & F\times \PP^1 \ar[ll]_{f'} \ar[dd]|{\hole}_>>>>>>{q_G} \ar[rd]^q  \ar[rr]^{\pi'} && \PP ^1 \ar[rd]^\psi \ar[dd]|{\hole}_>>>>>>{\psi_G}   &\\
& E  \ar[ld]^{g_E}  & & \PP (\mc{E}_{2,0}) \ar[ll]_{\qquad f} \ar[rr]^{\pi\qquad}\ar[ld]^{g_S} &&\PP ^1\ar[ld]^{g_{\PP^1}}  \\
F/G & &
(F\times \PP^1)/G  \ar[ll]^{f_G} \ar[rr]_{\pi_G}& &\PP ^1/G&
}
\]
\end{cla}

\begin{proof}
Note that the existence of $g_E$ with $g_E\circ \varphi=\varphi_G$ 
is obvious by the choice of $G$.
Furthermore, we see in \cite{MR799664} that the elliptic fibration $\pi_G$ is of type $(p-2/p^*)$, 
and hence  Theorem \ref{thm:main1} implies there is an isomorphism 
$$
h_1\colon (F\times \PP^1)/G\to \PP (\mc{E}_{2,0}).
$$ 
Since both side has the unique $\PP^1$-bundle structure, there is an automorphism $h_E$ of $E$ satisfying
$f\circ h_1=h_E\circ g_E^{-1} \circ f_G$, in particular there is an isomorphism 
$$
h_2\colon \PP (h_E^*\mc{E}_{2,0})\to  (F\times \PP^1)/G
$$ 
over $E$. 
Moreover, since $h_E^*\mc{E}_{2,0}\cong \mc{E}_{2,0}$, 
there is an automorphism 
$$
h_3\colon \PP (\mc{E}_{2,0})\to  \PP (h_E^*\mc{E}_{2,0})
$$
over $E$. Define the isomorphism $g_S$ to be $h_3\circ h_2$, then we obtain $g_E\circ f=g_S\circ f_G$. 

Since the left square in the diagram \eqref{eqn:2,0,p} is the fiber product, 
we obtain the required assertion.
\end{proof}
\fi

%%%%%%%%%%%%%%
\subsection{Cases (ii-1), (ii-3): $\PP (\mc{E}_{Q})$, except  $p=2$ and $F$ is supersingular}\label{subsec:pne2}    
In this case, there is a separable isogeny $\varphi\colon F\to E$ of degree $2$. Then there is an order $2$ 
element $\mc{L}\in \Pic^0 F$ and the following diagram:
\[ 
\xymatrix{ 
F \ar[d]_\varphi \ar@{}[dr]|{\square}  &  \PP(\mc{O}_F\oplus \mc{L})  \ar[l]_{f'\quad} \ar[r]^{\quad \pi'} \ar[d]^q  & \PP ^1   \ar[d]^\psi  \\
E      &  \PP (\mc{E}_{Q})      \ar[l]_{f}           \ar[r]^{\pi} &  \PP ^1 
}
\]
When $E$ is ordinary and $p=2$, $\pi$ has one wild fiber $2D_1$ over a point $Q_1$ and one tame multiple 
fiber $2D_2$ over a point $Q_2$. Since $\psi$ is wildly ramified at a single point and $\pi'$ is of type $(2,2)$,  Lemma \ref{lem:deg2} (i) (or (ii)) implies that $Q_1$ is the unique branch point of $\psi$.

When $p\ne 2$, $\pi$ has $3$ multiple fibers over points $Q_1,Q_2,Q_3\in \PP^1$.
It follows from Lemma \ref{lem:deg2} (i) that $\psi$ is branched at two points of $Q_i$'s.

%%%%%%%%%%
\subsection{Cases (ii-2), (ii-3): $\PP (\mc{E}_{Q})$, $p=2$}\label{subsec:p2}  
In this case, we have the following:
\[ 
\xymatrix{ 
E_2 \ar[d]_\Fr \ar@{}[dr]|{\square}  &  \PP(\mc{E}_{2,0}) \ar[l]_{f'} \ar[r]^{\pi'} \ar[d]^q  & \PP ^1   \ar[d]^\Fr  \\
E      &  \PP (\mc{E}_{Q})      \ar[l]_{f}           \ar[r]^{\pi} &  \PP ^1 
}
\]
Recall that $\pi'$ has the unique wild fiber over a point $Q'\in \PP^1$.

When $E$ is supersingular, $\pi$ also has the unique wild fiber over the point $\Fr (Q')$.
When $E$ is ordinary, $\pi$ has one wild fiber $2D_1$ over a point $Q_1$ and one tame multiple 
fiber $2D_2$ over a point $Q_2$. 
By Lemma \ref{lem:deg2} (iii), we can see $\Fr (Q')=Q_1$.

%%%%%%%%%%
\subsection{Proof of Theorem \ref{thm:reduction}}

\noindent \emph{Proof of Theorem \ref{thm:reduction}.}
Define $q\colon F\times \PP^1\to S$ to be a suitable successive finite \'etale covers and purely inseparable morphisms of degree $p$ in the previous subsections. Then we obtain Theorem \ref{thm:reduction}. \qed

\begin{rem}\label{rem:strange}
%\begin{enumerate}
%\item
A wild fiber $mD$ is said to be \emph{of strange type} if $a=m-1$. 
Katsura and Ueno show  a reduction of  a wild multiple fiber to a tame multiple fiber 
by a finite cover if no wild fibers of strange type appear in the procedure of reduction (see \cite[\S 6 
and p.330]
{MR799664}).  

Recall that if $E$ is supersingular, the elliptic fibration $\pi\colon \PP_E(\mc{E}_Q)\to \PP^1$ has a multiple fiber satisfying 
$
(a/m)=(1/2^*),
$ 
namely $\pi$ has a wild fiber of strange type. In \S \ref{subsec:p2},
we obtain a reduction of a wild multiple fiber of strange type to a tame multiple fiber. 
\if0
\item
Take a set of generators $Q_1,Q_2$ of the subgroup 
$$
\{ Q\in F \mid 2Q=0\}(\cong \Z/2\Z\times \Z/2\Z)
$$
of $F$.
Consider the automorphism $g_1,g_2$ of $F\times \PP^1$ defined by
\begin{align*}
g_1\colon& (P,x) \mapsto  (P+Q_1,-x)\\
g_2\colon& (P,x) \mapsto  (P+Q_2,x^{-1}).
\end{align*}
Then we can see that $g_1$ and $g_2$ generates the group $\Z/2\Z\times \Z/2\Z$ acting on 
$F\times \PP^1$, and $\PP(\mc{E}_Q)\cong (F\times \PP^1)/(\Z/2\Z\times \Z/2\Z)$ (cf. \cite[Theorem 5]{Su69}).
The quotient morphism $F\times \PP^1\to (F\times \PP^1)/(\Z/2\Z\times \Z/2\Z)$ coincides
with the finite cover $F\times \PP^1\to \PP(\mc{E}_Q)$ obtained by the composition of $q$'s  in 
\S  \ref{subsec:O+L} and \ref{subsec:pne2}. 
\end{enumerate}
\fi
\end{rem}

%%%%%%%%%%%%%%%%%%%%%%%%%%%%%%%%%%%%%%%%%%%%%%%%%%%%%%%
%%%%%%%%%%%%%%%%%%%%%%%%%%%%%%%%%%%%%%%%%%%%%%%%%%%%%%%

%\bibliographystyle{amsalpha}
%\bibliography{togashi-uehara_221130}

\

\noindent
Takato Togashi

TOHO GIRLS' JUNIOR AND SENIOR HIGH SCHOOL,
1-41-1 Wakaba-cho, 
Chofu, Tokyo, 182-8510, Japan

{\em e-mail address}\ : 
\  t.togashi@toho.ac.jp
\ \vspace{0mm} \\

\noindent
Hokuto Uehara

Department of Mathematical Sciences,
Graduate School of Science,
Tokyo Metropolitan University,
1-1 Minamiohsawa,
Hachioji,
Tokyo,
192-0397,
Japan 

{\em e-mail address}\ : \  hokuto@tmu.ac.jp
\ \vspace{0mm} \\

\end{document}